\documentclass{amsproc}
\usepackage{euscript}
\usepackage{cases}
\usepackage{mathrsfs}
\usepackage{bbm}
\usepackage{amssymb}
\usepackage{amsfonts,amsmath,amsxtra,mathdots,mathabx,pifont}
\usepackage{color}
\usepackage{hyperref}
\usepackage{tikz}
\usepackage{appendix,upgreek}

\allowdisplaybreaks

\DeclareFontFamily{U}{matha}{\hyphenchar\font45}
\DeclareFontShape{U}{matha}{m}{n}{
	<5> <6> <7> <8> <9> <10> gen * matha
	<10.95> matha10 <12> <14.4> <17.28> <20.74> <24.88> matha12
}{}
\DeclareSymbolFont{matha}{U}{matha}{m}{n}

\DeclareMathSymbol{\Lt}{3}{matha}{"CE}
\DeclareMathSymbol{\Gt}{3}{matha}{"CF}

\DeclareSymbolFont{mathc}{OML}{txmi}{m}{it}
\DeclareMathSymbol{\varuu}{\mathord}{mathc}{117}
\DeclareMathSymbol{\varvv}{\mathord}{mathc}{118}
\DeclareMathSymbol{\varww}{\mathord}{mathc}{119}

\def\lp{\left(}
\def\rp{\right)}

\def\valpha{\text{\scalebox{0.88}[1.02]{$\alpha$}}}   
\def\vepsilon{\upvarepsilon}

\def\vchi{\text{\raisebox{0.6 \depth}{\scalebox{0.9}[1.1]{$\chi$}}}} 
\def\vlambda{\text{\scalebox{0.9}[1]{$\lambda$}}}

\def\vnu{\text{{\scalebox{0.86}[1]{$\nu$}}}} 
\def\varnu{\text{{\scalebox{0.9}[1]{$\upnu$}}}}
\def\vvarkappa{\text{{\scalebox{0.9}[1]{$\upkappa$}}}}
\def\uppii{\text{\scalebox{0.8}[0.96]{$\uppi$}}}

\def\bfnu{\boldsymbol{\text{{\scalebox{0.86}[1]{$\upnu$}}}}}
\def\bfkappa{\boldsymbol{\text{{\scalebox{0.86}[1]{$\upkappa$}}}}}

\def\bfn{\boldsymbol{\text{{\scalebox{0.9}[1]{$n$}}}}}
\def\bfm{\boldsymbol{\text{{\scalebox{0.9}[1]{$m$}}}}}
\def\bfu{\boldsymbol{\text{{\scalebox{0.9}[1]{$u$}}}}}

\def\bfN{\boldsymbol{\text{{\scalebox{0.9}[1]{$N$}}}}}

\newcommand{\BC}{{\mathbb {C}}}
\newcommand{\BH}{{\mathbb {H}}} 
\newcommand{\BQ}{{\mathbb {Q}}} 
\newcommand{\BR}{{\mathbb {R}}} 
\newcommand{\BZ}{{\mathbb {Z}}}

\def\RN{\mathrm{N}}

\newcommand{\SL}{{\mathrm {SL}}}

\newcommand{\ra}{\rightarrow} 
\def\viint{	\int \hskip -5 pt \int} 

\def\sumo{\sideset{}{^{ \mathrm{o} }}\sum}
\def\sumf{\sideset{}{^\flat}\sum}

\def\nd{\mathrm{d}}
\def\trh{ \mathrm{trh}}
\def\Tr{ \mathrm{Tr}}

\def\SB{\text{\raisebox{- 2 \depth}{\scalebox{1.1}{$ \text{\usefont{U}{BOONDOX-calo}{m}{n}B} \hskip 0.5pt $}}}}

\def\SO {\text{\raisebox{- 2 \depth}{\scalebox{1.1}{$ \text{\usefont{U}{BOONDOX-calo}{m}{n}O}  $}}}}

\def\SDH{\text{\raisebox{- 6 \depth}{\scalebox{1.06}{$ \text{\usefont{U}{dutchcal}{m}{n}H}  $}}}}
\def\SDI{\text{\raisebox{- 6 \depth}{\scalebox{1.06}{$ \text{\usefont{U}{dutchcal}{m}{n} I}  $}}}}

\def\shskip{\hskip 0.5 pt}

\def\frc{\mathfrak{c}}
\def\frd{\mathfrak{d}}

\def\frm{\mathfrak{m}}
\def\frn{\mathfrak{n}}
\def\frp{\mathfrak{p}}
\def\frq{\mathfrak{q}}

\newcommand{\delete}[1]{}

\theoremstyle{plain}

\newtheorem{thm}{Theorem} \newtheorem{cor}{Corollary}[section]
\newtheorem{lem}{Lemma}[section]
\newtheorem{defn}{Definition}[section] \newtheorem{prop}[thm]{Proposition}

\theoremstyle{remark} 
\newtheorem{remark}{Remark}[section]

\numberwithin{equation}{section}

\begin{document}

	\title[Mean Lindel\"of Hypothesis and Prime Geodesic Theorem]{Mean Lindel\"of Hypothesis and Prime Geodesic Theorem on Bianchi Manifolds}

\begin{abstract}
	 In this paper, we prove the mean Lindel\"of hypothesis for the second moment of symmetric square $L$-functions and thereby obtain the (unconditional) error bound $O (x^{36/23+\vepsilon})$ in the prime geodesic theorem on Bianchi manifolds. 
\end{abstract}

	\author[C. Li and Z. Qi]{Changlin Li  and Zhi Qi}
\address{School of Mathematical Sciences\\ Zhejiang University\\Hangzhou, 310027\\China}
\email{12135012@zju.edu.cn, zhi.qi@zju.edu.cn} 
\thanks{The second author was supported by National Key R\&D Program of China No. 2022YFA1005300 and National Natural Science Foundation of China No. 12071420.}

\subjclass[2020]{11M41, 11F30}
\keywords{symmetric square $L$-functions, mean LindeL\"of hypothesis, prime geodesic theorem.} 

\maketitle

\section{Introduction}

Let  $F $ be an imaginary quadratic field and $ \SO$ be its ring of integers. Assume that the class number $h_F = 1$. Let the Bianchi group $\varGamma = \SL_2 (\SO)$ act on the hyperbolic space $\BH^3$.  

In 1983, the prime geodesic theorem on the Bianchi manifold $\varGamma \backslash \mathbb{H}^3$ was established by Sarnak in the seminal work \cite{Sarnak-H3}:
\begin{align}\label{1eq: PGT, 2}
	\pi_{\varGamma}  (x) = \mathrm{li} (x^2) + E_{\varGamma}  (x), 
\end{align}
and he proved the benchmark error bound $ E_{\varGamma} (x) = O (x^{5/3+\vepsilon}) $, where  $\pi_{\varGamma} (x) $   counts the prime geodesics on $\varGamma \backslash \mathbb{H}^3$ of norm below $ x$ and  $ \mathrm{li} (x) $ is  the logarithmic integral as in the prime number theorem.

Let    $ \SB  $ be an orthonormal basis of Hecke--Maass cusp forms in $ L^2 (\varGamma \backslash \mathbb{H}^3) $. For $f \in \SB$ let $1+ 4 t_f^2$ be its Laplace eigenvalue and $L (s, \mathrm{Sym}^2 f)$ be its symmetric square $L$-function.  

In 2001, for $F = \BQ (i)$ (so that $\varGamma \backslash \mathbb{H}^3$ is the Picard manifold) Koyama  \cite{Koyama-PGT-Picard} proved 
\begin{align}\label{1eq: bound Koyama}
	E_{\varGamma} (x) \Lt x^{11/7+\vepsilon}  , 
\end{align}  conditionally on the mean Lindel\"of hypothesis: 
\begin{align}\label{1eq: sym2 mean, 1}
	\sum_{T \leqslant  t_f  \leqslant 2 T}  | L (s, \mathrm{Sym}^2 f)|^2 \Lt |s|^{A} T^{3+\vepsilon} , \qquad \mathrm{Re} (s) = \frac 1 2 .
\end{align}  
The mean Lindel\"of hypothesis on the modular surface $\SL_2 (\BZ) \backslash \BH^2$ was conjectured (for the Rankin--Selberg $L (s, f \times f) = \zeta (s) L (s, \mathrm{Sym}^2 f)$) in 1984 by Iwaniec \cite{Iwaniec-PGT} and settled in 1995 by Luo and Sarnak \cite{Luo-Sarnak-QE}. However, on the Picard manifold $ \SL_2 (\BZ[i]) \backslash \BH^3 $ this conjecture has been a long-standing open problem. Note that the  convexity bound for $ L (s, \mathrm{Sym}^2 f)$ yields trivially $ O (|s|^{3+\vepsilon} T^{5+\vepsilon})$.  It is only recent that non-trivial bounds  were obtained by different methods in \cite{BCCFL-PGT-Picard-1}, \cite{BF-Picard-Sym2}, and \cite{Qi-Sym2-LS}:  $$  O \big(|s|^{A} T^{4+\vepsilon} \big) , \qquad  O   \big(|s|^{A} T^{11/3+\vepsilon}\big), \qquad  O   \big(|s|^{3+\vepsilon} T^{7/2+\vepsilon}\big) , $$ 
and every one of them leads to an improved (unconditional)  bound for $E_{\varGamma} (x)$. 

The second major ingredient in the study of prime geodesic theorem is  the subconvexity of  quadratic Dirichlet $L$-functions  (over $\SO$), that is,
\begin{align}\label{1eq: subconvex bound}
	L (s, \vchi_{\mathfrak{q}} ) \Lt |s|^A \RN (\mathfrak{q})^{\theta +\vepsilon}, \qquad \mathrm{Re} (s) = \frac 1 2, 
\end{align}
where $ \vchi_{\mathfrak{q}} $ is the quadratic Dirichlet character of conductor $\mathfrak{q}$. The currently best Weyl subconvexity exponent $\theta = 1/6$ is due to Nelson \cite{Nelson-Eisenstein}---it is a number-field generalization of the work of Conrey and Iwaniec \cite{CI-Cubic}. 

In 2022, Balog et al. \cite{BBCL-PGT-Picard-3} improved Koyama's conditional error bound \eqref{1eq: bound Koyama} into 
\begin{align}\label{1eq: error bound}
	E_{\varGamma} (x) \Lt x^{  3 / 2 +   {(24 \theta -1 ) } / {46 } + \vepsilon  } , 
\end{align}
or 
\begin{align}\label{1eq: error bound, 2}
	E_{\varGamma} (x) \Lt x^{    {36  } / {23 } + \vepsilon  } , 
\end{align}
on the Weyl exponent $\theta = 1/6$. This is the analogue of the currently best error bound $E_{\varGamma} (x) = O (x^{25/36+\vepsilon})$ in the case $\varGamma = \SL_2 (\BZ)$ by Soundararajan and Young \cite{Sound-Young-PGT}.  

In view of Remark 4.2 in \cite{Koyama-PGT-Picard}, the Kuznetsov formula in \cite{BM-Kuz-Spherical,Venkatesh-BeyondEndoscopy,B-Mo2}, and the Kuznetsov--Bykovski\u{\i} formula in \cite{C-Wu-Z-KB-Formula}, the error bounds in \eqref{1eq: bound Koyama}, \eqref{1eq: error bound}, and \eqref{1eq: error bound, 2} should be valid for Bianchi manifolds as in the original setting of Sarnak \cite{Sarnak-H3}.  

In this paper, on Bianchi manifolds we prove the mean Lindel\"of hypothesis \eqref{1eq: sym2 mean, 1} and thereby make the error bound \eqref{1eq: error bound, 2} unconditional. 

\begin{thm}\label{thm: mean-Lindelof} Let the notation be as above. Let $\vepsilon > 0$ and $T \geqslant 1$.  
	Then 
	\begin{align}\label{1eq: sym2 mean, 2}
		\sum_{T \leqslant t_f  \leqslant 2T  }  | L (s, \mathrm{Sym}^2 f)|^2 \Lt_{\vepsilon}  |s|^{{12}} T^{3+\vepsilon} , 
	\end{align}  
 for $\mathrm{Re} (s)  = 1/2$, where the    implied constant depends only on $\vepsilon$. 
\end{thm}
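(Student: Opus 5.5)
We will follow the Luo--Sarnak strategy as adapted to $\mathbb{H}^3$: bound the harmonically weighted second moment with a spectral large sieve of the right strength. First, the approximate functional equation writes $L(s,\mathrm{Sym}^2 f)$ as sums $\sum_{\mathfrak{n}} \lambda_f(\mathfrak{n}^2)\RN(\mathfrak{n})^{-s}V(\RN(\mathfrak{n})/X)$ plus dual sums. The archimedean factor of $\mathrm{Sym}^2 f$ over the complex place is $\Gamma_\BC(s)\Gamma_\BC(s+2it_f)\Gamma_\BC(s-2it_f)$, so the analytic conductor is about $|s|^2 (|s|+T)^4$, i.e. $\asymp |s|^{2}T^{4}$ for $t_f\asymp T$. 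After a dyadic partition, the relevant sums have lengths $N \Lt |s|^{1+\vepsilon}T^{2+\vepsilon}$ in norm. We will use the relation $\lambda_f(\mathfrak{n}^2)=\sum_{\mathfrak{d}^2\mid\ldots}$ (Hecke multiplicativity) only to clean up coefficients; the basic object is
\[
\sum_{T\le t_f\le 2T} \omega_f \Big|\sum_{\RN(\mathfrak{n})\sim N} a_{\mathfrak{n}}\lambda_f(\mathfrak{n}^2)\Big|^2 ,
\]
with the harmonic weight $\omega_f = 1/L(1,\mathrm{Sym}^2 f)$ removed at the end by the bound $L(1,\mathrm{Sym}^2 f)\Lt T^{\vepsilon}$ (Hoffstein--Lockhart type lower bounds are not needed in this direction; we need the upper bound $L(1,\mathrm{Sym}^2 f)\Lt T^\vepsilon$ to pass from unweighted to weighted).

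The key step is a large sieve inequality for squares over $\SO$:
\[
\sum_{T\le t_f\le 2T}\omega_f\Big|\sum_{\RN(\mathfrak{n})\sim N} a_{\mathfrak{n}}\lambda_f(\mathfrak{n}^2)\Big|^2 \Lt (T^3+ \text{small})\,(TN)^{\vepsilon}\sum|a_{\mathfrak{n}}|^2,
\]
valid uniformly for $N\le T^{2}|s|$. Combined with Cauchy--Schwarz over the $O(\log)$ pieces, this yields $T^{3+\vepsilon}$ times $\sum_{\RN(\mathfrak{n})\le N}\RN(\mathfrak{n})^{-1}\Lt T^\vepsilon$, with the $|s|$-power, coming from the conductor and from smoothing in the approximate functional equation, absorbed into $|s|^{12}$. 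To prove it, we apply the Kuznetsov formula for $\SL_2(\SO)$ (Bruggeman--Motohashi, Lokvenec-Guleska) with a test function localizing $t_f\in[T,2T]$. The diagonal gives $T^3\sum|a_\mathfrak{n}|^2$ (Weyl law on $\mathbb{H}^3$), the Eisenstein contribution is handled as in the proof via $|\zeta_F(1+2it)|^{-2}$ and is smaller, and the off-diagonal is
\[
\sum_{\mathfrak{n},\mathfrak{m}} a_{\mathfrak{n}}\bar a_{\mathfrak{m}}\sum_{c} \frac{S(n^2,m^2;c)}{|c|^2}\,\mathcal{H}\Big(\frac{4\pi\sqrt{n^2m^2}}{c}\Big),
\]
with $\mathcal{H}$ the Bessel transform involving $J_{2it}(z)J_{2it}(\bar z)$-type kernels.

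The main obstacle is this off-diagonal term. We will first use the asymptotics of the complex Bessel kernel to see that $\mathcal{H}(z)$ is negligible unless $|z|\Gt T$, i.e. $|c|\Lt \sqrt{\RN(nm)}/T \approx N/T$, and there it oscillates like $e(\pm\mathrm{Re}\, z\cos\cdot)$ with main term roughly $T^{2}\,\exp(i\,\text{phase})$. Then, following Luo--Sarnak, we open the Kloosterman sum, apply Poisson summation over $\SO$ in $n$ and $m$ (as two-dimensional lattice sums), and exploit the square structure: the quadratic Gauss sums $\sum_{x \bmod c} \psi(a x^2/c)$ produce a square-root saving that compensates for $N$ up to $T^2$. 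The dual sums, after stationary phase, are short, and the resulting bound should be $O(T^{3+\vepsilon}\sum|a_\mathfrak{n}|^2)$ in the range $N\le T^{2+\vepsilon}|s|^{O(1)}$. If the direct Poisson approach loses, the fallback is to pass to the first moment via $|\sum|^2$ expansion and use Hecke relations $\lambda_f(\mathfrak{n}^2)\lambda_f(\mathfrak{m}^2)$, reducing to a linear spectral sum $\sum_f\omega_f\lambda_f(\mathfrak{r})$ with $\RN(\mathfrak{r})\le N^2\approx T^4$, which is again handled by Kuznetsov plus the Weil bound and careful Bessel-integral analysis over $\BC$.
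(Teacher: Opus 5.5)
There is a genuine gap at the central step. You state the key inequality as a spectral large sieve with \emph{arbitrary} coefficients $a_{\mathfrak n}$, of strength $T^{3}\sum|a_{\mathfrak n}|^2$ for $\RN(\mathfrak n)$ up to $T^2|s|$. The only argument you give for it is to open Kuznetsov and apply Poisson summation in $n$ and $m$, and that needs smooth weights; it says nothing for general $a_{\mathfrak n}$. No inequality of that strength with general coefficients is available: the symmetric-square large sieve route of \cite{Qi-Sym2-LS} gives only $T^{7/2+\vepsilon}$ for this moment. The paper does not prove one either. It works throughout with the specific smooth sums $\sum_{\RN(n)\sim N}a_f(n^2)w(n/\sqrt N)$, where $w$ is $X$-inert with $X=|s|+\log T$. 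Tracking this inertness through every step is what produces $|s|^{12}$, so that exponent is not a free ``absorption''. The paper also uses Kuznetsov for $\varGamma_0(16)$ so that the $2$-part of the later Gauss sums can be evaluated.

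More seriously, even with smooth weights the mechanism you describe does not work on $\BH^3$. After Kuznetsov one has $\RN(c)\Lt (N/T)^2$. After Poisson in both $n_1$ and $n_2$ the dual sums are \emph{not} short: they still have $\RN(n_i)\asymp N$ (or $\rho^2N$). The complex Bessel integral carries no factor $1/M$ that would shorten the $c$-range or the dual ranges as on $\BH^2$, and stationary phase in it gives nothing beyond trivial estimation. The Gauss-sum evaluation $|T(n_1,n_2;c)|\Lt \RN(c)^{3/2}\RN(d)$ is exactly your ``square-root saving''. Yet inserting it and estimating trivially gives about $N^2T\approx T^5$ in the range $\RN(c)\asymp (N/T)^2$, $N\approx T^2$, which is no better than Weil's bound before Poisson, against the required $T^{3+\vepsilon}$.

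The missing idea is arithmetic. The complete sum equals, up to bounded factors, $\RN(c)^{3/2}\RN(d)\,\big(\frac{m_1m_2}{c^{\star}}\big)\,e_F[-n_1n_2/4c]$. The phase is moved into the archimedean integral, where it merges with the factor $e[-u_1u_2/\zeta]$ produced by the Weil identity. The resulting weight is then separated in $n_1,n_2$ by Mellin inversion, with controlled parameter ranges. After AM--GM one is left with
$\sum_{q}\RN(q)^{-1/2}\big|\sum_m \vchi_q(m)\vchi_{i\nu,\kappa}(m)\RN(m)^{-1/2}w(|m|/\sqrt N)\big|^2$.
Heath-Brown's quadratic large sieve over $\SO$ \cite{Heath-Brown-Quad-LS} bounds this by $(\sqrt Q+\sqrt{1+\nu^2+\kappa^2})T^{\vepsilon}$ instead of the trivial $\sqrt Q\,N$. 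That recovers precisely the factor $N\approx T^2$ lost above, and nothing in your sketch plays this role.

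Your fallback does not help either. It is the same off-diagonal reorganized by Hecke relations, with $\RN(\mathfrak r)$ running up to $N^4$, not $N^2$. Applying the Weil bound termwise throws away exactly this cancellation.
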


\begin{remark}
	Note that in the case $|s| \geqslant T$ the bound in \eqref{1eq: sym2 mean, 2} follows trivially from the convexity bound $L (s, \mathrm{Sym}^2 f) \Lt |s|^{3/2+\vepsilon}$, so subsequently we shall assume 
	\begin{align}\label{1eq: s<T}
		|s| <  T. 
	\end{align}
\end{remark}

\begin{thm}\label{thm: PGT}
	Let the notation be as above. We have
	\begin{align}
			\pi_{\varGamma}  (x) = \mathrm{li} (x^2) +  O \big(x^{36/23+\vepsilon} \big), 
	\end{align} 
for any $\vepsilon > 0$. 
\end{thm}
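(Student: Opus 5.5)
The statement immediately above is Theorem \ref{thm: PGT}. I will derive it from Theorem \ref{thm: mean-Lindelof} together with the existing reduction of Balog et al.\ \cite{BBCL-PGT-Picard-3}, so essentially no new analysis is needed beyond checking that the inputs match. Their argument is written for the Picard manifold. It gives the bound \eqref{1eq: error bound} under two hypotheses: the mean Lindel\"of hypothesis \eqref{1eq: sym2 mean, 1} with some fixed power $|s|^A$, and the subconvexity bound \eqref{1eq: subconvex bound} with exponent $\theta$.

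The steps, in order, are as follows. First, start from the explicit formula for $\psi_\varGamma(x)$, obtained from the Selberg trace formula with a smoothed test function of width $Y$ around $x$. This expresses $\psi_\varGamma(x) - x^2/2$ as $\sum_{t_f \leqslant T} x^{1+2it_f}/(1+2it_f)$ plus an error $O(x^{2}/T + x^{1+\vepsilon}T)$-type terms, with the usual balancing between the smoothing and the truncation. Second, bound the spectral exponential sum $\sum_{t_f\le T} x^{2it_f}$. Following Koyama and Balog et al., use the Kuznetsov--Bykovski\u{\i} formula \cite{C-Wu-Z-KB-Formula}, valid for any class-number-one imaginary quadratic $F$, or equivalently the Zagier-type identity. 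This relates the sum to a weighted sum over $\mathfrak q$ of $L(1/2+it,\vchi_{\mathfrak q})$-type Dirichlet series. It also produces a second piece, which is controlled by Cauchy--Schwarz against the second moment $\sum_{T\le t_f\le 2T}|L(s,\mathrm{Sym}^2 f)|^2$. Third, insert Theorem \ref{thm: mean-Lindelof}; the factor $|s|^{12}$ is harmless since $s$ stays on a line segment of polynomially bounded height, and the result is acceptable after taking $A=12$. Insert also Nelson's $\theta=1/6$ into \eqref{1eq: subconvex bound}. Fourth, re-optimize $T$ and $Y$ exactly as in \cite{BBCL-PGT-Picard-3}; with $\theta=1/6$ this gives $3/2+(4-1)/46 = 3/2+3/46 = 36/23$. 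Finally, pass from $\psi_\varGamma$ to $\pi_\varGamma$ by partial summation, which gives $\mathrm{li}(x^2)$ with the same error.

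The main obstacle is the transfer from $\BQ(i)$ to a general Bianchi group with $h_F=1$. This needs the spherical Kuznetsov formula \cite{BM-Kuz-Spherical,B-Mo2}, the Kuznetsov--Bykovski\u{\i} formula over $F$, and a check that the unit group, the different and the level structure change only constants; this is the content of Koyama's Remark 4.2. I would check this step by step against the Picard case. The most delicate point is that the Cauchy--Schwarz step uses the mean value exactly in dyadic ranges $[T,2T]$, which is precisely the form in which Theorem \ref{thm: mean-Lindelof} is proven.
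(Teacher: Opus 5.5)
Your proposal follows the same route as the paper: Theorem \ref{thm: PGT} is obtained by inserting Theorem \ref{thm: mean-Lindelof} (that is, \eqref{1eq: sym2 mean, 1} with $A = 12$) and Nelson's $\theta = 1/6$ into the conditional bound \eqref{1eq: error bound} of \cite{BBCL-PGT-Picard-3}, giving $3/2 + 3/46 = 36/23$. As in the paper, the passage from the Picard manifold to general Bianchi manifolds rests on Koyama's Remark 4.2, the Kuznetsov formula, and the Kuznetsov--Bykovski\u{\i} formula \cite{C-Wu-Z-KB-Formula}. The only inaccuracy is your reason why $|s|^{12}$ is harmless: it is not that $s$ has polynomially bounded height, since a polynomially large $|s|$ would cost powers of $T$; rather, the hypothesis \eqref{1eq: sym2 mean, 1} used in \cite{Koyama-PGT-Picard,BBCL-PGT-Picard-3} already allows an arbitrary fixed power $|s|^A$.
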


The prime geodesic theorem has been intensely studied from various perspectives by many mathematicians. The reader is referred to \cite{Iwaniec-PGT,Luo-Sarnak-QE,Bykovskii,Cai-PGT,Sound-Young-PGT} for that on the modular surface $\SL_2 (\BZ) \backslash \BH^2$ and to   \cite{Koyama-PGT-Picard,BCCFL-PGT-Picard-1,BF-PGT-Picard-2,BF-Picard-Sym2,Qi-Sym2-LS} for that on the Picard manifold $\SL_2 (\BZ[i]) \backslash \BH^3$; the list of references here is by no means complete.

Our primary focus of course will be on the proof of Theorem \ref{thm: mean-Lindelof}. 
Our idea is to use the standard Kuznetsov/Vorono\"i approach along with the quadratic large sieve of Heath-Brown as in the work of Khan and Young \cite{Khan-Young-Sym2}.  

For $T^{1/5} \leqslant M \leqslant T^{1-\vepsilon}$, it follows from their Theorem 1.3\footnote{More explicitly, Khan and Young proved for $T^{1/5} \leqslant M \leqslant T^{1-\vepsilon}$ and $|s| \leqslant T^{\vepsilon}$ that \begin{align*}
		\sum_{T \leqslant t_f  \leqslant T+ M  }  | L (s, \mathrm{Sym}^2 f)|^2 \Lt_{\vepsilon} M T^{1+\vepsilon} , 
\end{align*} 
so the exponent $A$ in \eqref{1eq: sym2 mean, K-Y} depends implicitly on $\vepsilon$. } that the mean Lindel\"of hypothesis holds:  
\begin{align}\label{1eq: sym2 mean, K-Y}
	\sum_{T \leqslant t_f  \leqslant T+ M  }  | L (s, \mathrm{Sym}^2 f)|^2 \Lt |s|^{A} M T^{1+\vepsilon} , 
\end{align} 
for $\mathrm{Re} (s) = 1/2$, where $ f $ traverses an orthonormal basis of Maass cusp forms on $ \SL_2 (\BZ) \backslash \BH^2 $. For other bounds or asymptotics for the moments of symmetric square $L$-functions for $\BH^2$, we refer the reader to 
\cite{Balkanova-Sym2-Maass,Blomer-Sym2-Hol,BF-Mean-Sym2,BF-Sym2-Non-vanishing,BF-KY-Sym2-2,Khan-Das-3rd-Moment,BF-KY-Sym2-1,Iwaniec-Michel-Sym2,Khan-Sym2-Non-vanishing,Lam-Sym2-Hol,Liu-Sym2-1st-Moment,Luo-Sym2-2nd-Moment,Nelson-Sym2,Tang-Xu-Sym2-Maass}. In particular, Frolenkov \cite{BF-KY-Sym2-1} recently provided a quite different approach to \cite[Theorem 1.3]{Khan-Young-Sym2} via the Vorono\"i summation formula for Zagier $L$-functions \cite{BF-Voronoi}. 

\begin{prop}\label{prop: main}
Let the setting be on $ \SL_2 (\SO) \backslash \BH^3$.	 Let $M, T$ be large parameters with $ T^{\vepsilon} \leqslant M \leqslant T^{1-\vepsilon} $. 
	 Then for $\mathrm{Re} (s) = 1/2$ we have
	 \begin{align}\label{1eq: sym2 mean, 3}
	 	\sum_{T \leqslant t_f  \leqslant T + M }  | L (s, \mathrm{Sym}^2 f)|^2 \Lt_{\vepsilon}  |s|^{ {12}}  T^{3+\vepsilon} . 
	 \end{align}  
\end{prop}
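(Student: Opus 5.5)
We follow the Kuznetsov/Vorono\"i strategy of Khan and Young, transplanted to $F$, and aim at the bound $|s|^{12}T^{3+\vepsilon}$ for the short family $T \leqslant t_f \leqslant T+M$. Note the target exceeds the expected local size $MT^{2}$ (the Weyl law on $\BH^3$ gives about $MT$ forms in the window, each of analytic conductor $T^4$), because we only need the full range $[T,2T]$ in Theorem \ref{thm: mean-Lindelof}; this slack of $T/M$ is what makes the method work.

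\textbf{Setup.} By the approximate functional equation, $|L(s,\mathrm{Sym}^2 f)|^2$ is bounded by $|s|^{O(1)}T^{\vepsilon}$ times averages of $|\sum_{\frn} \lambda_f(\frn^2) V(\RN\frn/N)\RN(\frn)^{-s}|^2$ with $N \lesssim |s|^{3/2} T^{2+\vepsilon}$ (conductor $|s|^3T^4$ in $\RN$-norm). We insert a smooth nonnegative weight $h(t)$ concentrated on $[T-M,T+2M]$, include the Eisenstein contribution (which is positive and can be dropped, or bounded directly via $\zeta_F$), and apply the spherical Kuznetsov formula for $\SL_2(\SO)$ (Bruggeman--Motohashi/Venkatesh) to
\[
\sum_f h(t_f)\omega_f \sum_{\frn,\frm} a_{\frn}\overline{a_{\frm}}\lambda_f(\frn^2)\lambda_f(\frm^2),
\]
where $\omega_f \asymp T^{-\vepsilon}$ after removing $L(1,\mathrm{Sym}^2 f)$ by standard arguments. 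The diagonal $\frn=\frm$ contributes $\asymp MT^2 \cdot \sum |a_{\frn}|^2 \Lt MT^{2+\vepsilon}$ times $|s|^{O(1)}$, which is acceptable. The off-diagonal is
\[
\sum_{\frc} \frac{1}{\RN\frc}\sum_{\frn,\frm} a_{\frn}\overline{a_{\frm}}\, S(\frn^2,\frm^2;\frc)\, \SDH\big(4\pi\sqrt{\frn^2\frm^2}/\frc\big),
\]
with $\SDH$ the Bessel transform of $h$ on $\BC$. A stationary phase analysis of $\SDH$ (with $h$ of width $M$) shows that it is negligible unless $|\frn\frm/\frc| \gg MT^{1-\vepsilon}$, and that it has an explicit oscillatory main term of the shape $e(\pm\mathrm{Tr}\,x)$ times a weight.

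\textbf{Off-diagonal via Vorono\"i and large sieve.} We open the Kloosterman sum and apply Poisson summation over $\frn$ in residue classes mod $\frc$, exploiting the square $\frn^2$. This produces quadratic Gauss sums, so the dual sum becomes a sum over $\frc$ of quadratic characters $\vchi$ evaluated at shifted arguments. After rearranging, this is a bilinear form in quadratic symbols $\big(\frac{\cdot}{\frc}\big)$. We then bound it with Heath-Brown's quadratic large sieve, generalized to $\SO$: for arbitrary coefficients,
\[
\sum_{\RN\frc\leqslant C}\Big|\sum_{\RN\frn\leqslant N} b_{\frn}\Big(\frac{\frn}{\frc}\Big)\Big|^2 \Lt (CN)^{\vepsilon}(C+N)\sum|b_{\frn}|^2.
\]
The Archimedean integrals are treated by two-dimensional stationary phase. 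The powers of $|s|$ accumulate from $N$, from the derivatives of $V$, and from $|s|<T$ as in \eqref{1eq: s<T}; we track them to reach $|s|^{12}$. The condition $M\geqslant T^{\vepsilon}$ is needed to truncate $\SDH$ and $\frc$.

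\textbf{Main obstacle.} The key difficulty is the complex Bessel kernel on $\BC$: its asymptotics involve a genuinely two-variable phase, and after Poisson summation we must produce a clean separation of variables so that the large sieve can be applied. In Khan--Young this step is one-dimensional. Our plan is to write $\SDH$ via its Mellin--Barnes/integral representation over $\BC^\times$, reduce it to an integral of $e(\mathrm{Tr}(\cdot))$ with phase $|z|$-dependent terms, and separate variables with Mellin inversion in both the modulus and the argument (Hecke characters $(z/|z|)^k$). This costs only $T^{\vepsilon}$ and powers of $|s|$. The large sieve then bounds the off-diagonal by $|s|^{O(1)}T^{\vepsilon}(T^3 + \text{lower order})$. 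Summing over dyadic ranges gives \eqref{1eq: sym2 mean, 3}.
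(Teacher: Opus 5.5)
Your outline has the same architecture as the paper (approximate functional equation, Kuznetsov, Poisson modulo $c$ turning $S(n_1^2,n_2^2;c)$ into Gauss sums and Jacobi symbols, Mellin separation of variables, Heath-Brown's large sieve). However, there is a genuine gap: the step that produces $T^{3}$ is only asserted, and the two quantitative claims you offer in its place are false on $\BH^3$.

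First, $\SDH(z)$ is \emph{not} negligible for $T \ll |z| \ll MT$. Take $\SDH(z)=MT^2\iint g(Mr)\,e\big(2Tr/\pi+4\mathrm{Re}(z\,\trh(r,\omega))\big)\,\nd\widehat{\mu}(r,\omega)$ with $z=iy$. The phase is $2Tr/\pi-4y\sinh r\sin\omega$, which is $r$-stationary as soon as $y|\sin\omega|\asymp T$, so $\SDH(iy)$ is not negligible in that range. There is no $\sinh r$ factor forcing $|z|\gg TM$, as there is for the real kernel $e(Tr/\pi\pm 2x\cosh r)$; one only gets negligibility for $|z|\ll T$. Hence $\RN(c)$ runs up to $(N/T)^2$, and after Poisson in both $n_1,n_2$ the dual sums keep length $N$ or $\rho^2N$. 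This $\BR$/$\BC$ asymmetry is exactly what the paper identifies as the obstruction.

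Second, the separation of variables is not $T^{\vepsilon}$-cheap. Once the phase $e_F[-n_1n_2/4c]$ of the evaluated exponential sum cancels against the phase $e[-u_1u_2/4\varLambda]$ coming from two applications of Weil's identity, the remaining weight $\Upsilon(z)$ has Mellin transform of size about $(MT)^{-1}$ (up to $X^{O(1)}\log T$). That transform is spread over $|\nu|\lesssim T/M+X$, $|\kappa|\lesssim\sqrt{\rho}\,T$: the radius $\sqrt{\rho}\,T$ comes from two-dimensional stationary phase in $(r,\omega)$, and the $\nu$-cut comes from the $\omega$-integral. Integrating the large-sieve bound $\sqrt{C}+\sqrt{\rho}\,T+T/M+X\ll XT^{1+\vepsilon}$ trivially over this box gives roughly $MT^2\cdot\frac{1}{MT}\cdot\sqrt{\rho}\,T\cdot\frac{T}{M}\cdot T$, that is, $|s|^{O(1)}T^{4+\vepsilon}/M$.

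So the off-diagonal analysis loses $(T/M)^2$ against $MT^2$, not the $T/M$ of your slack accounting, and it gives $T^{3+\vepsilon}$ only when $M=T^{1-\vepsilon}$. Your claim of $T^{3}$ for each $M$ directly from the off-diagonal is unsupported. The Proposition for smaller $M$ has to be deduced by positivity: $h\gg 1$ on $[T,T+M]\subset[T,T+T^{1-\vepsilon}]$ and $\omega_f\gg T^{-\vepsilon}$.

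The large sieve as you state it, with arbitrary moduli and arbitrary coefficients, is also false: take $b_{\frn}$ supported on squares, or let $\frc$ run over squares. Heath-Brown's inequality needs odd square-free $q$ and square-free $n$. The paper therefore writes $(c)=q\mathfrak{h}^2$ and applies the sieve only to square-free $q$. It handles the unrestricted $m$-sums by Mellin-separating into coefficients $\chi_{i\nu,\kappa}(m)/\sqrt{\RN(m)}$ and passing to the moment $\sum_q|L(s,\omega\vchi_q\chi_{i\nu,\kappa})|^2/\sqrt{\RN(q)}$.

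Finally, two pieces are not addressed at all. One is the zero frequency, which is non-zero only for square $c$ and is bounded via $\widehat{w}(0,0;\varLambda)\ll XMT^{2+\vepsilon}/|\varLambda|$. The other is the $2$-adic part of the exponential sum, for which the paper passes to level $16$ to force $4\mid(n_1,n_2)$.
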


\begin{remark}
	Actually, instead of {\rm\eqref{1eq: sym2 mean, 3}}, we shall prove 
	\begin{align}\label{1eq: sym2 mean, 4}
		\sum_{T \leqslant t_f  \leqslant T + M }  | L (s, \mathrm{Sym}^2 f)|^2 \Lt_{\vepsilon}  \frac {|s|^{ {12}}  T^{4+\vepsilon}} {M} .  
	\end{align}  

\end{remark}

Clearly    
\eqref{1eq: sym2 mean, 2}, \eqref{1eq: sym2 mean, 3}, and \eqref{1eq: sym2 mean, 4} are equivalent to one another as  the latter two are both mean Lindel\"of  for (but only for) $M = T^{1-\vepsilon}$. The introduction of $M$ in Proposition \ref{prop: main} seems artificial but it will help us facilitate the proof by applying the analysis in \cite{Qi-GL(3)} (however, the focus therein is the case $M = T^{\vepsilon}$). 

Note that the result of Khan and Young in \eqref{1eq: sym2 mean, K-Y} is valid for $ M = T^{1/5}$. The reader must wonder why one is not able to prove for some $M = T^{1-\delta}$ (with $\delta > 0$ fixed)  in   \eqref{1eq: sym2 mean, 3} or \eqref{1eq: sym2 mean, 4} the similar mean Lindel\"of bound $O_{s} (  MT^{2+\vepsilon})$. The simple answer is that---as remarked in \S 1.4 of \cite{Qi-Sym2-LS}---it should be   those  non-spherical cuspidal representations $\uppii$ on $ \SL_2 (\SO) \backslash \SL_2 (\BC)  $ with spectral parameters $ T \leqslant |t_{\uppii}| \leqslant T+M $ and $T \leqslant |p_{\uppii} | \leqslant T + M $ that are truly analogous to those Maass cusp forms $f$  on $ \SL_2 (\BZ) \backslash \BH^2 $ with $  T \leqslant t_f \leqslant T+M $.  However, if we restrict our attention to $ \SL_2 (\SO) \backslash \BH^3  $ and  $ \SL_2 (\BZ) \backslash \BH^2 $, then  the  (analytic) difficulty of the present problem on $   \SL_2 (\SO) \backslash \mathbb{H}^3 $ is caused by the asymmetry exhibited in the (domain of)  complex Bessel integral:
\begin{align*}
	M T^2 \int_{0}^{\pi} \int_{-M^{\vepsilon}/M}^{M^{\vepsilon}/M}   g (M r) e (  2 Tr/\pi \pm  4 \mathrm{Re} (z  \trh (r, \omega)  ))  \nd r \nd \omega, 
\end{align*}
with
\begin{align*}
	 \trh (r, \omega) = \cosh r \cos \omega + i \sinh r \sin \omega, 
\end{align*}
in comparison with the real Bessel integral:
\begin{align*}
	M T \int_{-M^{\vepsilon}/M}^{M^{\vepsilon}/M}   g (M r) e (Tr /\pi \pm 2 x \cosh r ) \nd r .
\end{align*}
See \cite[Lemma 8.2]{Qi-GL(3)}.  
Consequently, in the most typical cases, 
\begin{itemize}
	\item [(1)] the Poisson summation is less powerful in the setting of $\SL_2 (\SO) \backslash \mathbb{H}^3$  than  $\SL_2 (\BZ) \backslash \mathbb{H}^2$:  one could {\it not} reduce length of summations by Poisson; 
	
	\item [(2)] the stationary-phase estimate (after Poisson and Mellin) is less beneficial in the setting of $\BC$ than $\BR$: one could {\it not} save more than trivial estimation.  
\end{itemize} 
Therefore, our proof will be different from \cite{Khan-Young-Sym2} in that the third Poisson will  {not} be applied, and our analysis will be simpler than \cite{Qi-GL(3)} in that the final integral will be bounded trivially.  Our main saving is indeed of arithmetic nature and the  double Poisson only serves as a bridge to the quadratic large sieve of Heath-Brown.  

For more detailed discussions on (1) and (2), the reader is referred to Remarks \ref{rem: c-length}, \ref{rem: worse than trivial}, and \ref{rem: 3rd Poisson}.

\subsection*{Notation}

By $X \Lt Y$ or $X = O (Y)$ we mean that $|X| \leqslant c Y$  for some constant $c  > 0$, and by $X \asymp Y$ we mean that $X \Lt Y$ and $Y \Lt X$. We write $X \Lt_{\valpha, \beta, ...} Y $ or $  X = O_{\valpha, \beta, ...} (Y) $ if the implied constant $c$ depends on $\valpha$, $\beta$, ....  

The notation $x \sim X$ stands for  $ X <  x \leqslant 2 X $.  

By `negligibly small' we mean $ O_A ( T^{-A} )$ for arbitrarily large but fixed $A > 0$. 

Throughout the paper,  $\vepsilon  $ is arbitrarily small and its value  may differ from one occurrence to another.

\section{Preliminaries}

\subsection{Basic Notation}

Let $F = \BQ ( \sqrt{d_{F}} )$ be imaginary quadratic (of  discriminant $d_{F} < 0$). Assume that the class number $h_{F} = 1$. It is well known that
\begin{align*}
	d_F = -3, -4, -7, -8, -11, -19, -43, -67, -163. 
\end{align*} Let $\SO$ be its ring of integers and $\SO ^{\times}$ be its group of units. Write  $\updelta_F = \sqrt{d_F}$. Note that 
$ \SO' = \updelta_F^{-1} \SO  $ is the dual of $\SO$. Denote $w_F = |\SO^{\times}|$. 
Let $\mathrm{N}$ and $\Tr$ denote the norm and the trace for $F \subset \BC$, respectively. 

For a non-zero ideal $\frn = (n)$, we usually choose the standard $n$ so that $\arg (n)  \in \allowbreak  [0, 2\pi / w_F)$. 
Reserve the letter $\frp$ for prime ideals. Let $v_{\frp}$ denote the $\frp$-adic valuation; occasionally, we write $ v_{p}$ for $\frp = (p)$. 

 Define $e [z] = e (\Tr (z)) $ (as usual $e (x) = \exp (2\pi i x) $) to be the standard additive character on $\BC$.   
 Let $\nd z$ be twice the   Lebesgue measure on $\BC$.  
 
 It will also be convenient to introduce $ e_F [ z ] = e [\updelta_F^{-1}  z]$.

\delete{ For  $m_1,m_2 \in\SO'$ and $c \in \SO \smallsetminus \{0\}$ define the Kloosterman sum 
 \begin{align}\label{2eq: defn Kloosterman KS}
 	S  (m_1, m_2 ; c ) = \sum_{   \valpha \shskip \in (\SO  / c \SO)^{\times}  } e  \bigg[   \frac {m_1 \valpha + m_2 \widebar{\valpha} } {c } \bigg] , 
 \end{align} 
with $  \valpha \widebar{\valpha}  \equiv 1 (\mathrm{mod} \,  c)$.
}

  For  $n_1,n_2 \in\SO$ and $c \in \SO \smallsetminus \{0\}$ define the Kloosterman sum 
\begin{align}\label{2eq: defn Kloosterman KS, 2}
	S  (n_1, n_2 ; c ) = \sum_{   \valpha \shskip \in (\SO / c \SO)^{\times} } e_F \bigg[   \frac {n_1 \valpha + n_2 \widebar{\valpha} } {c } \bigg] , 
\end{align} with  
$\valpha \widebar{\valpha}  \equiv 1 (\mathrm{mod} \, c)$.

The   (unitary) characters on $\BC^{\times} $ are of the form     \begin{align}\label{2eq: chi vk}
	\vchi_{i \varnu      ,  \vvarkappa          } (z) = |z|^{  2 i \varnu        } (z/|z|)^{  \vvarkappa          }  , 
\end{align}  for   $ \varnu         $ real and   $ \vvarkappa        $ integral, 
so we introduce  $\EuScript{A}     = \BR \times  \BZ$ to parameterize the (Mellin) unitary dual of $\BC^{\times}  $.  
Define its dual $ \widehat{\EuScript{A}} = \BR \times \BR / 2 \pi \BZ$. 
Let $\nd \mu (\varnu     ,  \vvarkappa ) $ or $\nd \widehat{\mu} (r, \omega)$  denote the usual Lebesgue measure on  $\EuScript{A}    $ or $ \widehat{\EuScript{A}}$ respectively. 

For $n, q \in \SO \smallsetminus \{0\}$, with $ 2 \nmid \RN (q) $, let 
\begin{align}\label{2eq: Jacobi}
	 \vchi_{q} (n) = \Big(\frac n q \Big)
\end{align}   
denote the  Jacobi symbol; for $(q)$ square-free, $ \vchi_{q} $ is   the quadratic Dirichlet character of conductor $(q)$. 

\subsection{Poisson Summation Formula} 

The following Poisson summation formula is a special case of \cite[VII.3 (3.2)]{Neukirch-ANT}.

\begin{lem}\label{lem: Poisson}
	Let $\varww \in C_c^{\infty} (\BC)$. Then 
	\begin{align}
		\frac 1 {\sqrt{|d_{F}|}}	\sum_{m \in \SO'} \varww (m) =    \sum_{n \in \SO } \widehat {\varww} (n),
	\end{align}
	where $ \widehat {\varww} $ is the complex Fourier transform of $ \varww $ defined to be 
	\begin{align}\label{4eq: Fourier}
		\widehat {\varww} (u) 
		= \int \hskip -5pt \int_{\BC} \varww (z) e[     {u} z] \nd z, \qquad  (\text{$u \in \BC$}).
	\end{align}
\end{lem}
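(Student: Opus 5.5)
This is the standard Poisson summation for the lattice $\SO\subset\BC$ and its dual $\SO'$. I would derive it from the classical Poisson formula on $\BR^2$, after checking that the normalizations ($\nd z$ being twice Lebesgue measure and $e[z]=e(\Tr z)$) produce exactly the constant $1/\sqrt{|d_F|}$.

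First, the pairing $(u,z)\mapsto \Tr(uz)=2\,\mathrm{Re}(uz)$ is a nondegenerate real bilinear form on $\BC\cong\BR^2$. Under it, the dual lattice of $\SO$ is by definition the inverse different $\SO'=\updelta_F^{-1}\SO$, i.e. $\{m\in F:\Tr(m\SO)\subset\BZ\}$. Symmetrically, $\SO$ is the dual of $\SO'$. So $e[um]=1$ for all $u\in\SO$, $m\in\SO'$. Write $\Lambda=\SO'$ and $\Lambda^*=\SO$ for its dual under the trace pairing.

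Second, apply the abstract Poisson formula for a lattice $\Lambda$ in a real vector space $V$ with a Haar measure $\mu$ (Neukirch VII.3 (3.2), or directly on $\BR^2$ by a linear change of variables):
\[
\sum_{m\in\Lambda}\varww(m)=\frac{1}{\mathrm{vol}_\mu(V/\Lambda)}\sum_{n\in\Lambda^*}\int_V \varww(z)e[nz]\,\nd\mu(z).
\]
The sign convention $e[+nz]$ is harmless, since $\Lambda^*=-\Lambda^*$. To prove it directly, periodize $\varww$ over $\Lambda$ and expand the smooth periodic function in a Fourier series on $V/\Lambda$. Its characters are $z\mapsto e[nz]$ with $n\in\Lambda^*$, and evaluating at $0$ gives the formula. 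Absolute convergence follows from $\varww\in C_c^\infty$, which gives rapid decay of $\widehat\varww$.

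Third, compute the covolume with $\mu=\nd z$, which is twice Lebesgue measure. The lattice $\SO$ has Lebesgue covolume $\sqrt{|d_F|}/2$, hence $\nd z$-covolume $\sqrt{|d_F|}$. Multiplying by $\updelta_F^{-1}$ scales area by $|\updelta_F|^{-2}=|d_F|^{-1}$, so $\mathrm{vol}(\BC/\SO')=|d_F|^{-1/2}$. Substituting gives
\[
\sum_{m\in\SO'}\varww(m)=\sqrt{|d_F|}\sum_{n\in\SO}\widehat\varww(n),
\]
which is the claim after dividing by $\sqrt{|d_F|}$.

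The only real point of care is this normalization bookkeeping: identifying the dual lattice under $\Tr$ rather than the Euclidean inner product, and the factor $2$ in $\nd z$. Together these make the self-dual measure for $e[\cdot]$ on $\BC$ equal to $\nd z$, and the constant comes out exactly as stated. The analytic content is routine.
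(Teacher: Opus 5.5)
Your proposal is correct and follows the paper's approach: the paper simply cites the lattice Poisson formula of Neukirch VII.3 (3.2), and you carry out that specialization explicitly. Your bookkeeping checks out: $\SO'$ is dual to $\SO$ under $\Tr$, and the $\nd z$-covolume of $\SO'$ is $|d_F|^{-1/2}$, which gives exactly the stated constant.
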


\begin{cor}\label{cor: Poisson}
	Let $\varww \in C_c^{\infty} (\BC)$. For non-zero $ c \in \SO$  let $\phi (\hskip 1pt \cdot \hskip 1pt | c)$ be a function on $\SO  / c \shskip \SO$. Then
	\begin{align}\label{4eq: Poisson Cor}
	 \sum_{n \in \SO} \phi (n| c)  \varww (n) = \frac 1 {\sqrt{|d_{F}|} \RN (c)}  \sum_{ n \in \SO} \widehat {\phi }  (n|c) \widehat{\varww} (n/ \updelta_F c), 
	\end{align}
	where $\widehat{\varww} $ is as defined in {\rm\eqref{4eq: Fourier}}, and    
	\begin{align}
		\widehat {\phi }  (n|c) =  \sum_{a \in \SO  / c \shskip \SO } \phi (a|c) e_F \Big[ \hskip -1pt   - \frac {    {n} a } {c} \Big] .  
	\end{align}
\end{cor}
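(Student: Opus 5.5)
The statement is Corollary \ref{cor: Poisson}. I plan to derive it from Lemma \ref{lem: Poisson} by splitting $n$ into residue classes modulo $c$ and applying Poisson summation to each class. First I write every $n \in \SO$ uniquely as $n = a + c m$, with $a$ running over a fixed set of representatives of $\SO/c\shskip\SO$ and $m \in \SO$. Since $\phi(n|c) = \phi(a|c)$, this gives
\begin{align*}
\sum_{n\in\SO}\phi(n|c)\varww(n)=\sum_{a \in \SO/c\shskip\SO}\phi(a|c)\sum_{m\in\SO}\varww(a+cm).
\end{align*}

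Lemma \ref{lem: Poisson} sums over the dual lattice $\SO' = \updelta_F^{-1}\SO$, so I reparametrize the inner sum by $m' = \updelta_F^{-1} m \in \SO'$. The inner sum becomes $\sum_{m'\in\SO'}\varww_a(m')$ with $\varww_a(z)=\varww(a+c\updelta_F z)$. This function is still in $C_c^\infty(\BC)$, so the lemma applies and gives $\sqrt{|d_F|}\sum_{n\in\SO}\widehat{\varww_a}(n)$.

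Next I compute $\widehat{\varww_a}$ by the change of variables $w = a + c\updelta_F z$. Here $\nd z$ is twice Lebesgue measure, so $\nd w = |c\updelta_F|^2 \nd z = |d_F|\RN(c)\nd z$. Substituting,
\begin{align*}
\widehat{\varww_a}(n)=\frac{1}{|d_F|\RN(c)}\int\hskip-5pt\int_{\BC}\varww(w)\, e\Big[\frac{n(w-a)}{\updelta_F c}\Big]\nd w=\frac{1}{|d_F|\RN(c)}\, e_F\Big[-\frac{na}{c}\Big]\,\widehat{\varww}\big(n/\updelta_F c\big),
\end{align*}
using $e[x/\updelta_F]=e_F[x]$.

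Multiplying by $\sqrt{|d_F|}$ gives the normalization $1/\big(\sqrt{|d_F|}\RN(c)\big)$. I then interchange the finite $a$-sum with the absolutely convergent $n$-sum, which is justified because $\widehat{\varww}$ is Schwartz. The $a$-sum then assembles into $\widehat{\phi}(n|c)$, which is \eqref{4eq: Poisson Cor}.

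The only step needing care is the bookkeeping of constants, namely the Jacobian $|d_F|\RN(c)$ under the convention that $\nd z$ is twice Lebesgue measure, together with the sign of the character. I also need to check that $e_F[-na/c]$ is independent of the representative $a$. It is: replacing $a$ by $a+cb$ changes the exponent by $\Tr(nb/\updelta_F)\in\BZ$, because $nb\in\SO$ and $\updelta_F^{-1}\SO=\SO'$ is the trace dual. So both $\widehat{\phi}(n|c)$ and the decomposition are well defined.
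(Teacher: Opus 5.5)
Your proposal is correct and is essentially the paper's proof. You split $n$ into residue classes $a+\updelta_F c\, m$ with $m\in\SO'$, apply Lemma \ref{lem: Poisson} to each inner sum, and change variables with Jacobian $|\updelta_F c|^2=|d_F|\RN(c)$ to extract $e_F[-na/c]\,\widehat{\varww}(n/\updelta_F c)$; your extra check that $e_F[-na/c]$ does not depend on the representative $a$ is a detail the paper leaves implicit.
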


\begin{proof}
	We  split the left-hand side of \eqref{4eq: Poisson Cor} according to the residue classes in  $\SO  / c \shskip \SO$ as follows:
	\begin{align*}
		\sum_{a \in \SO / c \shskip \SO } \phi (a | c)     \sum_{m \in \SO'} \varww (a  + \updelta_F c m  ) . 
	\end{align*}
	By applying the Poisson summation in Lemma \ref{lem: Poisson} to the inner sum, we obtain 
	\begin{align*}
		& \sqrt{|d_{F}|} \sum_{a \in \SO / c \shskip \SO} \phi (a | c) \sum_{ n \in \SO} \viint_{\BC}  \varww ( a  + \updelta_F c z ) e [     {n} z] \nd z \\
		= & \sqrt{|d_{F}|} \sum_{a \in \SO  / c \shskip \SO} \phi (a | c) \sum_{ n \in \SO} \frac 1 {|\updelta_F c|^2 } \int \hskip -5pt \int_{\BC}  \varww (  z) e \Big[  \frac {    {n} z } {\updelta_F c}  - \frac {    {n} a } {\updelta_F c}   \Big] \nd z,
	\end{align*}
	and hence the right-hand side of \eqref{4eq: Poisson Cor}. 
\end{proof}

\subsection{The Quadratic Large Sieve of Heath-Brown} 

This paper will conclude with the quadratic large sieve inequality of Heath-Brown \cite{Heath-Brown-Quad-LS}. It has been extended to the Gaussian field and arbitrary number fields in  \cite{Heath-Brown-Quad-LS-2,Heath-Brown-Quad-LS-3}. 

\begin{lem}\label{lem:Heath-Brown LS}
	Let $  N, Q \geqslant 1$ and $\vepsilon > 0$ be real numbers.  Let  $\{ a_n \}$ be an arbitrary complex sequence on $\SO \smallsetminus \{0\}$. Then 
	$$ \sumf_{\RN(q) \leq Q} \bigg|\ \sumf_{\RN(n) \leq N} a_n \Big( \frac{n}{q} \Big)  \bigg|^2 \Lt (Q+N) (QN)^\vepsilon \sumf_{\RN(n) \leq N} |a_n|^2 ,$$
	where  the superscript {\small $ \flat$} indicates restriction to odd square-free integers in $\SO \smallsetminus \{0\}$. 
\end{lem}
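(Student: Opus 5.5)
The plan is to deduce the lemma from the number-field versions of Heath-Brown's inequality in \cite{Heath-Brown-Quad-LS-2,Heath-Brown-Quad-LS-3}, rather than reprove it. Those results are formulated for quadratic residue symbols indexed by ideals. So the only work is to pass from ideals to the elements of $\SO \smallsetminus \{0\}$ used in the statement, which is possible because $h_F = 1$.

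\textbf{Step 1: the symbol depends only on the ideal $(q)$.} For odd $q$, $\big(\frac{n}{q}\big)$ is the Jacobi symbol attached to the ideal $(q)$, so replacing $q$ by $uq$ with $u \in \SO^{\times}$ does not change it. Hence the outer sum over $q$ is, up to the factor $w_F$, a sum over odd square-free ideals $\frq$ with $\RN(\frq) \leqslant Q$.

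\textbf{Step 2: reduce the inner sum to ideals.} In the inner sum, write $n = u n_0$, where $n_0$ is the standard generator of $(n)$ (with $\arg(n_0) \in [0, 2\pi/w_F)$) and $u \in \SO^{\times}$. Then
\begin{align*}
\Big(\frac{n}{q}\Big) = \Big(\frac{u}{q}\Big)\Big(\frac{n_0}{q}\Big).
\end{align*}
The finitely many values $\big(\frac{u}{q}\big)$, for $u \in \SO^{\times}$, are determined by $q$ modulo a fixed modulus dividing $8 d_F$; this follows from the supplementary laws of quadratic reciprocity in $\SO$. We therefore split the $q$-sum into $O_F(1)$ classes according to these values. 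Within one class the inner sum becomes
\begin{align*}
\sumf_{\RN(\frn) \leqslant N} b_{\frn} \Big(\frac{n_0}{q}\Big), \qquad b_{\frn} = \sum_{u \in \SO^{\times}} \Big(\frac{u}{q}\Big) a_{u n_0},
\end{align*}
where $\frn = (n_0)$ runs over ideals. The coefficients $b_{\frn}$ are independent of $q$ within the class, and Cauchy--Schwarz gives $|b_{\frn}|^2 \leqslant w_F \sum_u |a_{u n_0}|^2$.

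\textbf{Step 3: apply the ideal version and reassemble.} Apply the ideal version of Heath-Brown's inequality to each class; restricting the $q$-sum to a sub-family only decreases the left-hand side. Summing over the $O_F(1)$ classes yields $(Q+N)(QN)^{\vepsilon}\sum |a_n|^2$, with the implied constant depending on $F$. Since $F$ ranges over nine fields, this is harmless.

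\textbf{If a self-contained proof were wanted.} One would follow Heath-Brown's bootstrap. Let $B(Q,N)$ be the optimal constant. By duality and quadratic reciprocity in $\SO$ (up to characters of conductor dividing $8 d_F$), $B(Q,N) \ll B(N,Q)$. Next, relax the square-free condition on $n$ by writing $n = n_1 n_2^2$ and applying Poisson summation (Corollary~\ref{cor: Poisson}) modulo $q$ to smoothed sums, evaluating the Gauss sums of the Jacobi symbol over $\SO$. This yields the recursive inequality
\begin{align*}
B(Q,N) \ll (QN)^{\vepsilon}\Big(Q + N + \min\big(B(Q,\, Q^2/N),\, \ldots\big)\Big),
\end{align*}
which, iterated, gives $B(Q,N) \ll (Q+N)(QN)^{\vepsilon}$. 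The main obstacle there is the careful treatment of the non-square-free moduli produced by the Poisson step and the uniform control of the resulting square parts. This is exactly the technical core of \cite{Heath-Brown-Quad-LS}, and it carries over to $\SO$ as in \cite{Heath-Brown-Quad-LS-2,Heath-Brown-Quad-LS-3}.
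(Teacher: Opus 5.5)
Your proposal matches the paper, which gives no argument of its own and simply invokes Heath-Brown's inequality \cite{Heath-Brown-Quad-LS} together with its extensions to $\BZ[i]$ and to general number fields \cite{Heath-Brown-Quad-LS-2,Heath-Brown-Quad-LS-3}. Your unit bookkeeping is the routine reduction the paper leaves implicit: you split the $q$-sum according to the character $u \mapsto (u/q)$ on $\SO^{\times}$, which takes at most two forms, so the supplementary-law claim is unnecessary; note only that after Step 2 the symbol is still evaluated at a chosen generator $n_0$, so your representatives must match the normalization of the cited theorem (e.g.\ primary elements), which the same splitting trick arranges.
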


\subsection{Notion of Inert Functions} \label{sec: inert functions}
For the purpose to facilitate the stationary phase analysis on $\BR$,  the notion of inert functions is introduced  in \cite{KPY-Stationary-Phase}.

Let $A [a, b ]  $ denote the annulus $  \{ z : |z| \in [a, b ]  \} $; in practice, the ratio $b/a = O (1)$. As usual let $D (a) = A[0, a]$ stand for a disc centered at $0$. 	

For $S, X > 0$,  a function $\varww   \in C^{\infty} (A[a, b])$ (not necessarily compactly supported) is said to be $S$-bounded and $X$-inert if 
\begin{align}\label{2eq: inert annulus}
	|z |^{i+j    } 	 \partial^{i+j   } \varww (z ) / \partial z^i \partial \widebar{z}^j     \Lt_{i, j    } S X^{i+j    } ,
\end{align} 
for all $i   $, $j$,  and  $z  \in A[a, b]$. Similarly, a function  $\varww   \in C_c^{\infty} (D[a ])$ is  said to be $S$-bounded and $X$-inert if 
\begin{align}\label{2eq: inert disc}
 	 \partial^{i+j   } \varww (z ) / \partial z^i \partial \widebar{z}^j     \Lt_{i, j    } S X^{i+j    } . 
\end{align}  
More generally,  a  multi-variable  function $\varww   \in C^{\infty} (A[\boldsymbol{\text{{\scalebox{0.9}[1]{$a$}}}} ,  {\boldsymbol{\text{{\scalebox{0.9}[1]{$b$}}}}}   ]  )$ is said to be $S$-bounded and $X$-inert if 
\begin{align}\label{2eq: inert annulus, 2}
	|\boldsymbol{\text{{\scalebox{0.9}[1]{$z$}}}}|^{\boldsymbol{\text{{\scalebox{0.9}[1]{$k$}}}}}  	\partial^{\boldsymbol{\boldsymbol{\text{{\scalebox{0.9}[1]{$k$}}}}   }} \varww (\boldsymbol{\text{{\scalebox{0.9}[1]{$z$}}}}) /  \partial \boldsymbol{\text{{\scalebox{0.9}[1]{$z$}}}}^{\boldsymbol{i}} \partial \widebar{{\boldsymbol{\text{{\scalebox{0.9}[1]{$z$}}}}}}^{\boldsymbol{j}} \Lt_{\boldsymbol{i   }, \boldsymbol{j}}  S X^{|\boldsymbol{\text{{\scalebox{0.9}[1]{$k$}}}}|} , \qquad \boldsymbol{\text{{\scalebox{0.9}[1]{$k$}}}} = \boldsymbol{i} + \boldsymbol{j}, 
\end{align} 
for all $\boldsymbol{i   }$, $\boldsymbol{ j  }$,  and $\boldsymbol{\text{{\scalebox{0.9}[1]{$z$}}}} \in A[\boldsymbol{\text{{\scalebox{0.9}[1]{$a$}}}} ,  {\boldsymbol{\text{{\scalebox{0.9}[1]{$b$}}}} }  ] $; 
here we have used the multi-variable notation:
\begin{align*}
	&  A[\boldsymbol{\text{{\scalebox{0.9}[1]{$a$}}}} ,  {\boldsymbol{\text{{\scalebox{0.9}[1]{$b$}}}}} ] = A[a_{1},  b_{1}] \times A[a_{2}, b_{2}] \times \cdots , \\
	&	\boldsymbol{\text{{\scalebox{0.9}[1]{$z$}}}} = (z_1, z_2, \cdots),  \quad \boldsymbol{i   } = (i_1, i_2, \cdots),  \quad \boldsymbol{j   } = (j_1, j_2, \cdots),   	 \\
	& |\boldsymbol{\text{{\scalebox{0.9}[1]{$z$}}}}|^{\boldsymbol{\text{{\scalebox{0.9}[1]{$k$}}}}} = |z_1|^{k   _1} |z_2|^{k   _2} \cdots, \quad  \frac {\partial^{\boldsymbol{\text{{\scalebox{0.9}[1]{$k$}}}}} }   { \partial \boldsymbol{\text{{\scalebox{0.9}[1]{$z$}}}}^{\boldsymbol{i}} \partial \widebar{{\boldsymbol{\text{{\scalebox{0.9}[1]{$z$}}}}}}^{\boldsymbol{j}}} = \frac {\partial^{k   _1}}  {\partial z_1^{i_1} \partial \widebar{z}_1^{j_1}}    \frac {  \partial^{k   _2} } {  \partial z_2^{i_2} \partial \widebar{z}_2^{j_2} }  \cdots, \quad |\boldsymbol{\text{{\scalebox{0.9}[1]{$k$}}}}| =  k   _1 + k   _2 + \cdots . 
\end{align*} 
Let us say simply `$X$-inert' if  $S = O(1)$ or  `inert' if moreover $X = O(1)$.   

\delete{For $S, X > 0$,  a function $\varww   \in C^{\infty} (A[a, b])$ is said to be $S$-bounded and $X$-inert if 
\begin{align*}
	|z |^{k    } 	\partial^{k   }  \varww (z ) \Lt_{k    } S X^{k    } ,
\end{align*} 
for all $k   $ and  $z  \in A[a, b]$.
More generally,  a  multi-variable  function $\varww   \in C^{\infty} (A[\boldsymbol{\text{{\scalebox{0.9}[1]{$a$}}}} ,  {\boldsymbol{\text{{\scalebox{0.9}[1]{$b$}}}}}   ]  )$ is said to be $S$-bounded and $X$-inert if 
\begin{align*}
	|\boldsymbol{\text{{\scalebox{0.9}[1]{$z$}}}}|^{\boldsymbol{\text{{\scalebox{0.9}[1]{$k$}}}}}  	\partial^{\boldsymbol{\text{{\scalebox{0.9}[1]{$k$}}}}} \varww (\boldsymbol{\text{{\scalebox{0.9}[1]{$z$}}}})  \Lt_{\boldsymbol{\text{{\scalebox{0.9}[1]{$k$}}}}} S X^{|\boldsymbol{\text{{\scalebox{0.9}[1]{$k$}}}}|} ,
\end{align*} 
for all $\boldsymbol{\text{{\scalebox{0.9}[1]{$k$}}}}$ and $\boldsymbol{\text{{\scalebox{0.9}[1]{$z$}}}} \in A[\boldsymbol{\text{{\scalebox{0.9}[1]{$a$}}}} ,  {\boldsymbol{\text{{\scalebox{0.9}[1]{$b$}}}} }  ] $; 
 here we have used the multi-variable notation:
\begin{align*}
	&  A[\boldsymbol{\text{{\scalebox{0.9}[1]{$a$}}}} ,  {\boldsymbol{\text{{\scalebox{0.9}[1]{$b$}}}}} ] = A[a_{1},  b_{1}] \times A[a_{2}, b_{2}] \times \cdots , \\
&	\boldsymbol{\text{{\scalebox{0.9}[1]{$z$}}}} = (z_1, z_2, \cdots),  \qquad \boldsymbol{\text{{\scalebox{0.9}[1]{$k$}}}} = (k   _1, k   _2, \cdots), \quad  	 \\
	& |\boldsymbol{\text{{\scalebox{0.9}[1]{$z$}}}}|^{\boldsymbol{\text{{\scalebox{0.9}[1]{$k$}}}}} = |z_1|^{k   _1} |z_2|^{k   _2} \cdots, \quad  \partial^{\boldsymbol{\text{{\scalebox{0.9}[1]{$k$}}}}} = \partial_1^{k   _1} \partial_2^{k   _2}  \cdots, \quad |\boldsymbol{\text{{\scalebox{0.9}[1]{$k$}}}}| =  k   _1 + k   _2 + \cdots . 
\end{align*} 
For $S = O(1)$, we shall simply say `$X$-inert'. }

\delete{	For $S, X > 0$, we say a (multi-variable) function $\varww   \in C_c^{\infty} (A[a_1, b_2] \times A[a_2, b_2] \times ...)$ is $X$-inert if 
	\begin{align*}
		|z_1|^{k   _1} |z_2|^{k   _2} 	\partial_1^{k   _1} \partial_2^{k   _2}  ...  \varww (z_1, z_2, ...) \Lt_{k   _1, k   _2, ...} S X^{k   _1 + k   _2 + ...} ,
	\end{align*} 
	for all $z_1, z_2, ...$ {\rm(}on the support of $\varww${\rm)}.}

\subsection{Stationary Phase and Weil's Identity}

We record here  \cite[Lemma A.1]{AHLQ-Bessel} and  \cite[Lemma 7.4]{Qi-GL(3)}; the former is a slightly improved version of  \cite[Lemma {\rm 8.1}]{BKY-Mass}.

\begin{lem}\label{lem: staionary phase, dim 1, 2}
	Let $\varww   \in C_c^{\infty} (a, b)$. Let  $f  \in C^{\infty} [a, b]$ be real-valued.  Suppose that there
	are   parameters $P, Q, R, S, Z  > 0$ such that
	\begin{align*}
		f^{(i)} (x) \Lt_{ \, i } Z / Q^{i}, \qquad \varww^{(j)} (x) \Lt_{ \, j } S / P^{j},
	\end{align*}
	for  $i \geqslant 2$ and $j \geqslant 0$, and
	\begin{align*}
		| f' (x) | \Gt R. 
	\end{align*}
	Then 
	\begin{align*}
		\int_a^b  e (f(x)) \varww (x)  \nd x \Lt_{ A} (b - a) S \bigg( \frac {Z} {R^2Q^2} + \frac 1 {R Q} + \frac 1 {R P} \bigg)^A  
	\end{align*} 
for any  $A \geqslant 0$.
\end{lem}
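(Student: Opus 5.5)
The plan is the standard repeated integration by parts (non-stationary phase), as in \cite[Lemma 8.1]{BKY-Mass}, with careful bookkeeping of the resulting terms. Since $f$ is real-valued and smooth with $|f'(x)| \Gt R > 0$ on $[a, b]$, the function $f'$ has no zeros, so $1/f'$ is smooth. We introduce the differential operator
\begin{align*}
	\mathcal{D} g = - \frac{\nd}{\nd x} \bigg( \frac{g}{2\pi i f'} \bigg) .
\end{align*}
Because $e(f(x)) = (2\pi i f'(x))^{-1} \frac{\nd}{\nd x} e(f(x))$ and $\varww$ is compactly supported in $(a, b)$, integrating by parts $A$ times gives no boundary terms and yields
\begin{align*}
	\int_a^b e(f(x)) \varww(x) \nd x = \int_a^b e(f(x)) (\mathcal{D}^A \varww)(x) \nd x .
\end{align*}
It therefore suffices to prove the pointwise bound $\mathcal{D}^A \varww (x) \Lt_A S \big( Z/(R^2Q^2) + 1/(RQ) + 1/(RP) \big)^A$, and then to estimate the last integral trivially. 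The trivial estimate supplies the factor $b - a$. (For non-integral $A$ one interpolates, or simply uses the next integer.)

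Next, we establish by induction on $A$ the structure of $\mathcal{D}^A \varww$. We claim it is a linear combination, with coefficients depending only on $A$, of terms of the form
\begin{align*}
	\frac{\varww^{(\nu)} f^{(i_1)} \cdots f^{(i_m)}}{(f')^{A+m}}, \qquad i_l \geqslant 2, \quad \nu + \sum_{l=1}^m (i_l - 1) = A .
\end{align*}
Applying $\mathcal{D}$ once more introduces one extra factor $1/f'$ and one derivative. If that derivative falls on $\varww^{(\nu)}$, it raises $\nu$ by one. If it falls on some $f^{(i_l)}$, it raises $i_l$ by one. If it falls on $(f')^{-A-m}$, it produces an extra factor $f''/f'$, so $m$ increases by one with a new index $i = 2$. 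In each case the constraint $\nu + \sum (i_l - 1) = A$ is updated to $A + 1$, and the denominator power $A + m$ is updated correctly.

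Finally, we bound each such term using the hypotheses, with the goal of showing it is at most $S\,\Theta^A$, where $\Theta$ denotes the bracket in the statement. The derivative bounds give
\begin{align*}
	\frac{\varww^{(\nu)} f^{(i_1)} \cdots f^{(i_m)}}{(f')^{A+m}} \Lt S P^{-\nu} \prod_{l=1}^m \big( Z Q^{-i_l} \big) \cdot R^{-A-m} .
\end{align*}
The main point, and the only genuinely delicate step, is to distribute the powers of $R$ correctly. The total power of $R$ available is $A + m = \nu + \sum_{l} i_l$. We assign $R^{-\nu}$ to $P^{-\nu}$, giving $(1/(RP))^{\nu}$. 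We assign $R^{-i_l}$ to each factor $Z Q^{-i_l}$, giving
\begin{align*}
	\frac{Z}{R^{i_l} Q^{i_l}} = \frac{Z}{R^2 Q^2} \Big( \frac{1}{RQ} \Big)^{i_l - 2} .
\end{align*}
Each factor $1/(RP)$, $Z/(R^2Q^2)$ or $1/(RQ)$ is at most $\Theta$. The total number of such factors is $\nu + \sum_l \big( 1 + (i_l - 2) \big) = \nu + \sum_l (i_l - 1) = A$. Hence every term is $\Lt S\, \Theta^A$, and combining this with the trivial integration over $[a, b]$ completes the proof.
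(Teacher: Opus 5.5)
Your argument is correct and is the standard one. The paper gives no proof of its own; it records this lemma from \cite[Lemma A.1]{AHLQ-Bessel} (a refinement of \cite[Lemma 8.1]{BKY-Mass}), which rests on exactly your repeated integration by parts, with terms $\varww^{(\nu)} f^{(i_1)} \cdots f^{(i_m)} / (f')^{A+m}$, $\nu + \sum_l (i_l - 1) = A$, and the splitting $Z/(RQ)^{i} = \frac{Z}{R^2Q^2} (RQ)^{2-i}$. For non-integral $A$, taking the minimum of the bounds for $\lfloor A \rfloor$ and $\lceil A \rceil$ works in all cases; using only $\lceil A \rceil$ needs $\Theta \leqslant 1$, and when $\Theta > 1$ the trivial bound suffices.
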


\begin{lem}\label{lem: staionary phase, dim 2, 2}
	Let $D \subset \BR^2$	be a finite domain. Let $\varww \in C_c^{\infty} (D)$. Let $f \in C^{\infty} (\overline{D}) $ be real-valued. Suppose that there
	are   parameters $P, Q,  \varUpsilon, \varPhi, R, S, Z > 0$ such that
	\begin{align*}
		(\partial/\partial x)^{i} (\partial/\partial y   )^{j}	f  (x, y   ) \Lt_{   i,   j } Z / Q^{i} \varPhi ^{ j}, \quad (\partial/\partial x)^{k} (\partial/\partial y   )^{l} \varww (x, y   ) \Lt_{  k,   l }  S / P^{k} \varUpsilon ^{ l},
	\end{align*}
	for  $i, j, k, l \geqslant 0$ with $i + j \geqslant 2$, and
	\begin{align*}
	(\partial f (x, y   )/\partial x  )^2 +  (\partial f (x, y   )/\partial y    )^2   \Gt  R^2. 
	\end{align*}
	Then   
	\begin{align*}
		\begin{aligned}
			\viint_D e (f(x, y   ))  \varww (x,  y   )   \nd x \nd y  & \Lt_{    A}   \mathrm{Area}(D) S \\
			\cdot & \Bigg\{ \frac {Z^2} {R^3} \bigg( \frac 1 {Q^3} + \frac 1 {\varPhi^3} \bigg) + \frac {1} {R } \bigg(   \frac 1 {P} +   \frac 1 {\varUpsilon  } + \frac 1 {Q} + \frac 1 {  \varPhi}  \bigg) \hskip -1pt  \Bigg\}^A
		\end{aligned}
	\end{align*}
	for any  $A \geqslant 0$.
\end{lem}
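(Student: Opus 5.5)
The plan is repeated integration by parts against the gradient of the phase, exactly as in the one-dimensional Lemma \ref{lem: staionary phase, dim 1, 2}, but now with the first-order operator adapted to $\nabla f$. Write $f_x = \partial f/\partial x$, $f_y = \partial f/\partial y$ and $|\nabla f|^2 = f_x^2 + f_y^2 \Gt R^2$. Define
\begin{align*}
	\mathcal{L} = \frac{1}{2\pi i} \cdot \frac{f_x \partial_x + f_y \partial_y}{|\nabla f|^2}, \qquad \text{so that} \qquad \mathcal{L}\, e(f) = e(f).
\end{align*}
Since $\varww$ is compactly supported in $D$, integrating by parts $A$ times gives
\begin{align*}
	\viint_D e(f)\, \varww \, \nd x \nd y = \viint_D e(f) \, (\mathcal{L}^{*})^{A} \varww \, \nd x \nd y,
\end{align*}
where $\mathcal{L}^{*} g = -\frac{1}{2\pi i}\big(\partial_x ( f_x g/|\nabla f|^2 ) + \partial_y ( f_y g/|\nabla f|^2 )\big)$. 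It therefore suffices to prove the pointwise bound
\begin{align*}
	(\mathcal{L}^{*})^{A}\varww \Lt S \Big\{ \frac{Z^2}{R^3}\Big(\frac1{Q^3}+\frac1{\varPhi^3}\Big) + \frac1R\Big(\frac1P+\frac1\varUpsilon+\frac1Q+\frac1\varPhi\Big)\Big\}^A,
\end{align*}
and then integrate it trivially over $D$, which produces the factor $\mathrm{Area}(D)$.

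The next step is a structural induction on $A$. I would show that $(\mathcal{L}^{*})^{A}\varww$ is a finite linear combination, with coefficients depending only on $A$, of terms of the form
\begin{align*}
	\partial_x^{k}\partial_y^{l}\varww \cdot \frac{\prod_{\nu} \partial^{\alpha_\nu} f}{|\nabla f|^{2m}}.
\end{align*}
These terms satisfy the following bookkeeping constraints.
\begin{itemize}
	\item[(i)] Each factor $\partial^{\alpha_\nu}f$ has order $|\alpha_\nu|\geqslant 1$.
	\item[(ii)] Counting first-order factors as degree $1$ and higher-order factors likewise, the numerator has degree $2m-A$ in $f$. Thus each integration lowers the homogeneity in $f$ by exactly one.
	\item[(iii)] The total number of differentiations, counting both those landing on $\varww$ and the orders beyond $1$ in the $f$-factors, equals $A$.
\end{itemize}
These constraints are verified directly by applying one more $\mathcal{L}^*$. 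Each derivative falls on $\varww$, on a first-order factor (making it second order), on a higher-order factor, or on $|\nabla f|^{-2m}$; in the last case it produces $2m\,(f_xf_{xx}+f_yf_{xy})/|\nabla f|^{2m+2}$, which preserves the degree count.

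To bound a typical term, I would use $|f_x|, |f_y| \leqslant |\nabla f|$ to cancel each first-order factor against one power of $|\nabla f|$. Each remaining higher-order factor $\partial_x^i\partial_y^j f$ (with $i+j\geqslant 2$) then costs at most $Z/(Q^i\varPhi^j)$ divided by one power of $|\nabla f|\geqslant R$. Each derivative on $\varww$ costs $1/P$ or $1/\varUpsilon$, accompanied by one more $1/R$. Consequently every term is bounded by $S$ times a product of $A$ "elementary" factors drawn from
\begin{align*}
	\frac{1}{RP}, \qquad \frac{1}{R\varUpsilon}, \qquad \frac{Z}{R^2Q^2}, \qquad \frac{Z}{R^2 Q\varPhi}, \qquad \frac{Z}{R^2\varPhi^2}, \qquad \frac{1}{RQ}, \qquad \frac{1}{R\varPhi},
\end{align*}
together with the factors $Z/(R^2 Q^{i}\varPhi^{j})$ coming from derivatives of order $i+j\geqslant 3$. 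The last group must be split as a product of several elementary factors in order to respect the count in (iii). To match the stated shape, I would use AM--GM. For example,
\begin{align*}
	\frac{Z}{R^2Q^2} = \Big(\frac{Z^2}{R^3Q^3}\Big)^{1/2}\Big(\frac1{RQ}\Big)^{1/2} \leqslant \frac{Z^2}{R^3Q^3}+\frac{1}{RQ},
\end{align*}
and similarly for the mixed and higher-order cases. So every elementary factor is at most the brace $\{\cdots\}$, and the product of $A$ of them is at most $\{\cdots\}^A$.

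The main obstacle is this exponent bookkeeping for the higher derivatives of $f$. A factor $\partial^{\alpha}f$ with $|\alpha|=n\geqslant 3$ consumes $n-1$ units of the differentiation budget but only one power of $1/R$. It must therefore be dominated by a product of $n-1$ elementary factors, for instance $Z/(RQ^n) \leqslant \big(Z/(R^2Q^2)\big)\big(1/(RQ)\big)^{n-2}\cdot R^{n-1}/R^{n-1}$, checked carefully against the homogeneity counts in (ii) and (iii). If this direct count fails in some regime, for instance when $Z/Q \Gt R$ is large, I would first try the device of \cite{BKY-Mass}: apply a smooth partition of unity in $D$ according to the dyadic size of $|\nabla f|$, and run the argument on each piece with $R$ replaced by the local size of $|\nabla f|$.
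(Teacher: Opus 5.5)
The paper does not prove this lemma; it quotes it from \cite[Lemma 7.4]{Qi-GL(3)}, the two-variable analogue of the integration-by-parts lemmas in \cite{BKY-Mass} and \cite{AHLQ-Bessel}. Your route, via $\mathcal{L}$, the structural induction and the constraints (i)--(iii), is the right one. The step you single out as the main obstacle is miscounted, however, and as written it fails. You give a higher-order factor $\partial^{\alpha}f$ with $|\alpha|=n\geqslant 3$ only one (or two) powers of $1/R$. The displayed inequality meant to compensate is false: its right-hand side equals $Z/(R^nQ^n)$, which is smaller than $Z/(RQ^n)$ once $R>1$. With that count the estimate would not close for large $R$, so the argument as you left it is incomplete. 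The partition-of-unity fallback is not what is needed.

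The fix is already in your own constraints. In a term $\partial_x^k\partial_y^l\varww\cdot\prod_\nu\partial^{\alpha_\nu}f/|\nabla f|^{2m}$, let $F_1,F_2$ be the numbers of first-order and higher-order $f$-factors. Then (ii) says $F_1+F_2=2m-A$, and (iii) says $k+l+\sum_{|\alpha_\nu|\geqslant 2}(|\alpha_\nu|-1)=A$. Hence, after cancelling the first-order factors against $|\nabla f|$, the remaining power in the denominator is
\begin{align*}
2m-F_1 = A+F_2 = k+l+\sum_{|\alpha_\nu|\geqslant 2}|\alpha_\nu| .
\end{align*}
So each derivative on $\varww$ receives one factor $|\nabla f|^{-1}$. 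Each factor $\partial_x^a\partial_y^b f$ with $a+b=n\geqslant 2$ receives $|\nabla f|^{-n}$, not $|\nabla f|^{-1}$, and so contributes at most $Z/(R^nQ^a\varPhi^b)$. This is one factor $Z/(R^2Q^{a'}\varPhi^{b'})$, with $a'+b'=2$, $a'\leqslant a$, $b'\leqslant b$, times $n-2$ factors $1/(RQ)$ or $1/(R\varPhi)$. That is exactly $n-1$ elementary factors for its $n-1$ units of budget. For example, the term $f_xf_{xxx}\varww/|\nabla f|^4$ at $A=2$ is bounded by $SZ/(R^3Q^3)=S\cdot\frac{Z}{R^2Q^2}\cdot\frac1{RQ}$, not by $SZ/(RQ^3)$.

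No regime such as $Z/Q\Gt R$ is special, and no dyadic partition in $|\nabla f|$ is needed. The argument actually yields the sharper brace, with $Z/(R^2Q^2)$, $Z/(R^2Q\varPhi)$, $Z/(R^2\varPhi^2)$ in place of the $Z^2/R^3$ terms. Your AM--GM step, together with $Q^{-1}\varPhi^{-1}\leqslant (Q^{-2}+\varPhi^{-2})/2$ for the mixed term, converts this to the stated form.
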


\begin{cor}\label{cor: staionary phase, complex}
	Let $D \subset \BC$	be a finite domain. Let $\varww \in C_c^{\infty} (D)$. Let $f \in C^{\infty} (\overline{D}) $  be analytic. Suppose that there
	are   parameters $P, Q,  R, S, Z > 0$ such that
	\begin{align*} 
	\partial^{i}  f  ( z  ) /\partial z^{i}  \Lt_{   i  } Z / Q^{i}, \qquad \partial^{j+k} \varww ( z  )  /\partial z^{j} \partial \widebar{z}^{k} \Lt_{  j,  k}  S / P^{j + k},
\end{align*}
for  $i \geqslant 2$ with $j, k  \geqslant 0$, and
	\begin{align*}
	| \partial f (z )/\partial z |   \Gt  R . 
	\end{align*}
	Then   
	\begin{align}
		\begin{aligned}
			\viint_D e [ f( z   ) ]  \varww ( z  )    \nd z  \Lt_{    A}   \mathrm{Area}(D) S  \bigg( \frac {Z^2} {R^3 Q^3} + \frac {1} {R Q} + \frac 1 {R P}    \bigg)^A
		\end{aligned}
	\end{align}
	for any  $A \geqslant 0$. 
\end{cor}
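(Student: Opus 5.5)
The plan is to reduce the statement to the real two-dimensional Lemma \ref{lem: staionary phase, dim 2, 2}. We write $z = x + i y$ and rewrite the oscillatory factor in real variables. Since $e[w] = e(\Tr (w)) = e(2\,\mathrm{Re}(w))$, we have $e[f(z)] = e(F(x,y))$ with the real-valued phase
\begin{align*}
F(x,y) = 2\, \mathrm{Re} \big( f(x+iy) \big).
\end{align*}
The measure $\nd z$ is twice Lebesgue measure $\nd x \nd y$, and this factor $2$ is harmless. So the integral in the corollary equals $2 \viint_D e(F(x,y))\, \varww(x+iy) \nd x \nd y$, and it remains to check the hypotheses of Lemma \ref{lem: staionary phase, dim 2, 2} for $F$ and for $\varww$ viewed as a function of $(x,y)$.

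\textbf{Derivatives of the phase.} Because $f$ is analytic, $\partial f/\partial x = f'$ and $\partial f/\partial y = i f'$, and iterating gives
\begin{align*}
(\partial/\partial x)^i (\partial/\partial y)^j f = i^j f^{(i+j)}.
\end{align*}
Taking real parts,
\begin{align*}
(\partial/\partial x)^i (\partial/\partial y)^j F = 2\, \mathrm{Re}\big( i^j f^{(i+j)} \big) \Lt_{i,j} Z / Q^{i+j}
\end{align*}
for $i + j \geqslant 2$. This is the hypothesis of the lemma with $\varPhi = Q$. For the gradient, the Cauchy--Riemann equations give $\partial F/\partial x = 2\,\mathrm{Re} f'$ and $\partial F/\partial y = -2\,\mathrm{Im} f'$. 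Hence
\begin{align*}
(\partial F/\partial x)^2 + (\partial F/\partial y)^2 = 4 |f'(z)|^2 \Gt R^2,
\end{align*}
so the lower bound holds with the same $R$, up to a constant.

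\textbf{Derivatives of the weight.} We use $\partial/\partial x = \partial/\partial z + \partial/\partial \widebar{z}$ and $\partial/\partial y = i(\partial/\partial z - \partial/\partial \widebar{z})$. Expanding $(\partial/\partial x)^k (\partial/\partial y)^l \varww$ gives a combination, with coefficients bounded in terms of $k,l$, of terms $\partial^{j+j'} \varww/\partial z^{j} \partial \widebar{z}^{j'}$ with $j + j' = k + l$. Each such term is $\Lt S/P^{k+l}$. So the weight hypothesis holds with $\varUpsilon = P$.

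\textbf{Conclusion.} Applying Lemma \ref{lem: staionary phase, dim 2, 2} with $\varPhi = Q$ and $\varUpsilon = P$ bounds the integral by
\begin{align*}
\mathrm{Area}(D)\, S \bigg\{ \frac{2 Z^2}{R^3 Q^3} + \frac{1}{R}\Big( \frac{2}{P} + \frac{2}{Q} \Big) \bigg\}^A .
\end{align*}
The numerical constants are absorbed into the implied constant depending on $A$, which gives the stated bound.

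There is no genuine obstacle here. The only points needing care are the Cauchy--Riemann bookkeeping, which makes the real gradient comparable to $|f'|$ and controls the mixed real derivatives of $F$ by the single sequence $f^{(i)}$, and the constant factors coming from the trace and from the normalization of $\nd z$.
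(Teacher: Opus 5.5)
Your proposal is correct and is exactly the intended derivation: the paper gives no separate proof, treating the statement as an immediate corollary of Lemma \ref{lem: staionary phase, dim 2, 2} via $e[f(z)] = e(2\,\mathrm{Re}\, f(z))$, the Cauchy--Riemann bookkeeping $(\partial/\partial x)^i (\partial/\partial y)^j f = i^j f^{(i+j)}$ and $|\nabla (2\,\mathrm{Re}\, f)| = 2|f'|$, and the choices $\varPhi = Q$, $\varUpsilon = P$. The constant factors coming from the trace and from $\nd z$ being twice Lebesgue measure are absorbed into the implied constant, as you say.
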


Next we invoke the Parseval--Plancherel identity of Weil \cite{Weil-Unitary};  the formula below is a rewriting  of (26) in \cite{Jacquet-RTF}  over $\BC $:  
\begin{align}\label{2eq: Weil} 
	\viint_{\BC} \varww (z) e \bigg[   \frac {u z^2} 2   \bigg] \nd z = \frac 1 { |u|} 	\viint_{\BC} \widehat{\varww} (z) e \bigg[ - \frac { z^2} {2 u}   \bigg] \nd z,  \qquad  (\text{$u \in \BC$}), 
\end{align}
where $\varww$ is a Schwartz function and $\widehat{\varww}$ is its Fourier transform (see \eqref{4eq: Fourier}). 
It should be stressed that the Weil identity (with the Weil constant $\gamma (u)$ on the right) is also valid for $\BR$ and non-Archimedean local fields. 

For $\varww \in C_c^{\infty} (\BC)$ define 
\begin{align}\label{2eq: defn w}
	\breve{\varww} (u) = |u|	\viint_{\BC} \varww (z) e  [     {u z^2}    ] \nd z,  \qquad  (\text{$u \in \BC$}). 
\end{align}

 \begin{lem}\label{lem: Weil}
  Let $a, b, S, Y > 0$. Suppose that $ \varww \in C_c^{\infty} (D (a)) $ is $S$-bounded and $Y$-inert {\rm(}see {\rm\eqref{2eq: inert disc}}{\rm)}. 
  Then  $\breve{\varww} (u)$ is an $a^2 S Y^2$-bounded  and $(1+ Y^2/b)$-inert function on $A [ b  , \infty)$ {\rm(}see {\rm\eqref{2eq: inert annulus}}{\rm)}. 
 \end{lem}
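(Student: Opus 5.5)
The plan is to convert $\breve{\varww}$, via the Weil identity \eqref{2eq: Weil}, into an integral of the Fourier transform $\widehat{\varww}$ against a phase that depends smoothly on $1/u$. Once $u$ sits in the denominator of the phase, its size and regularity in $u$ can be read off directly.

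\emph{Step 1: apply Weil.} Write $e[u z^2] = e[(2u) z^2/2]$ and apply \eqref{2eq: Weil} with $2u$ in place of $u$. This gives
\begin{align*}
\breve{\varww}(u) = |u| \cdot \frac{1}{|2u|} \viint_{\BC} \widehat{\varww}(z)\, e\Big[-\frac{z^2}{4u}\Big] \nd z = \frac12 \viint_{\BC} \widehat{\varww}(z)\, e\Big[-\frac{z^2}{4u}\Big] \nd z.
\end{align*}
Here $\varww$ is smooth and compactly supported, hence Schwartz, so the identity applies.

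\emph{Step 2: bound $\widehat{\varww}$.} Trivially $\widehat{\varww}(z) \Lt a^2 S$, since $\varww$ is supported in $D(a)$ and bounded by $S$. Integrating by parts $A$ times in \eqref{4eq: Fourier}, in $z$ or $\widebar z$ according to which of $|\mathrm{Re}\, z|$, $|\mathrm{Im}\, z|$ dominates, and using the $Y$-inert bounds \eqref{2eq: inert disc}, gives
\begin{align*}
\widehat{\varww}(z) \Lt_A a^2 S\, (1+|z|/Y)^{-A}.
\end{align*}
Consequently $\widehat{\varww}$ is essentially supported on $|z| \Lt Y^{1+\vepsilon}$. Moreover
\begin{align*}
\viint |\widehat{\varww}(z)|\,(1+|z|^2/Y^2)^{k}\, \nd z \Lt_k a^2 S Y^2
\end{align*}
for each fixed $k$. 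This already gives the size bound $\breve{\varww}(u) \Lt a^2 S Y^2$.

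\emph{Step 3: derivatives in $u$.} Differentiate under the integral sign. Since $e[x] = e(x+\widebar x)$, each application of $\partial/\partial u$ to $e[-z^2/4u]$ produces a factor $2\pi i\, z^2/(4u^2)$, and $\partial/\partial\widebar u$ produces the conjugate factor. By the product rule, $|u|^{i+j}\,\partial_u^i \partial_{\widebar u}^j\, e[-z^2/4u]$ is therefore a sum of terms bounded by
\begin{align*}
\Big(\frac{|z|^2}{|u|}\Big)^{k} \Lt \Big(1 + \frac{|z|^2}{Y^2}\Big)^{i+j} \Big(\frac{Y^2}{b}\Big)^{k}, \qquad 0 \leqslant k \leqslant i+j,
\end{align*}
using $|u| \geqslant b$. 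Lower-order terms in which some derivatives fall on powers of $u$ contribute only bounded factors.

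\emph{Step 4: conclude.} Integrating the bound of Step 3 against $|\widehat{\varww}|$, and using the weighted $L^1$ bound from Step 2, gives
\begin{align*}
|u|^{i+j}\, \partial^{i+j}\breve{\varww}(u)/\partial u^i \partial \widebar u^j \Lt_{i,j} a^2 S Y^2 \,(1+Y^2/b)^{i+j},
\end{align*}
which is exactly the claimed inertness on $A[b,\infty)$.

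The main point requiring care is Step 2. The integration by parts must be organized so that the $\partial/\partial z$ and $\partial/\partial\widebar z$ bounds of \eqref{2eq: inert disc} yield decay in $|z|$ in all directions. In addition, the tail of $\widehat{\varww}$ must be controlled well enough that the polynomial weights $|z|^{2k}$ produced in Step 3 remain integrable with the stated bound $a^2 S Y^2$. The rest is bookkeeping.
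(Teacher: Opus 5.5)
Your proposal is correct and follows the paper's proof: the Weil identity rewrites $\breve{\varww}(u)$ as $\frac12\viint \widehat{\varww}(z)\, e[-z^2/4u]\,\nd z$, repeated partial integration gives $\widehat{\varww}(z)\Lt a^2S(1+|z|/Y)^{-A}$, and the $u$-derivatives are obtained by differentiating the phase under the integral (Fa\`a di Bruno). Your bound $(|z|^2/|u|)^k\leqslant(1+|z|^2/Y^2)^{i+j}(Y^2/b)^k$ makes explicit the range $b\leqslant|u|<Y^2$, which the paper's proof (written for $|u|\geqslant Y^2$) leaves implicit. The Re/Im case split in your Step 2 is unnecessary, since $\partial_z e[uz]=2\pi i u\,e[uz]$ already produces the full factor $|u|$.
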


\begin{proof}
	It follows from \eqref{2eq: Weil}  and \eqref{2eq: defn w} that
	\begin{align*}
		 \breve{\varww} (u) = \frac 1 2 \viint_{\BC} \widehat{\varww} (z) e  \bigg[ \hskip -1pt     -   \frac { z^2}   {4 u}    \bigg] \nd z. 
	\end{align*}
Note that $$ \widehat{\varww} (z) \Lt  a^2 S \lp 1+ \frac {|z|} Y \rp^{-A} , $$ by repeated partial integration. It is then easy to conclude that
\begin{align*}
	|u |^{i+j    } 	{ \partial^{i+j   } \breve{\varww} (u )} /  {\partial u^i \partial \widebar{u}^j }    \Lt_{i, j    } a^2 S Y^2 ,
\end{align*}  for any $|u| \geqslant Y^2 $; for the derivatives of $ e[-z^2/4u] = e ( - z^2/4u) e(  - \widebar{z}^2/4\widebar{u} )$ one may use (the analytic or anti-analytic variant of) the  Fa\`a di Bruno formula \cite{Faa-di-Bruno}.
\end{proof}

It is clear that if $\varww (z) = \varww (z; \boldsymbol{\text{{\scalebox{0.9}[1]{$z$}}}})$ involves other $X$-inert variables $\boldsymbol{\text{{\scalebox{0.9}[1]{$z$}}}} = (z_1, z_2, \cdots)$, then $  \breve{\varww} (u) = \breve{\varww} (u; \boldsymbol{\text{{\scalebox{0.9}[1]{$z$}}}}) $ remains $X$-inert in these variables.

The stationary phase analysis on $\BC$ usually requires extra care for the angular variable (see \cite[\S \S 2.4, 6.1]{Qi-Gauss}), but in our symmetric square case the Weil identity over $\BC$ serves well as a simpler substitute. 

\delete{

According to \cite[\S \S 1.1, 1.2]{MO-Formulas}, for   $ |\arg (s) | < \pi$, as $|s| \ra \infty$, we have 
\begin{align}\label{6eq: Stirling, 1}
	\log \Gamma (s) = \lp s+\frac 1 2 \rp \log s - s + \frac 1 2 \log (2\pi) + O \lp \frac 1 {|s|} \rp, 
\end{align}
\begin{align}\label{6eq: Stirling, 2} 
	\psi (s) = \log s + O \lp \frac 1 {|s|} \rp, \qquad \psi^{(j)} (s) = \frac {(-)^{j-1} (j-1)!} {s^{j}} + O \lp \frac 1 {|s|^{j+1}} \rp, 
\end{align}
where $\psi (s)  = \Gamma' (s) / \Gamma (s) $ is the logarithm derivative of $\Gamma (s)$. 

\begin{lem}
	 Let $ |\arg (s) | < \pi$ and $|s|$ be large. Then
	 \begin{align}\label{6eq: Stirling, 4}
	 	\frac {\nd^j} {\nd s^j }	\lp \frac {\Gamma (s+ \valpha)} {\Gamma (s + \beta)} \rp = O_{j, \mathrm{Re} (\valpha-\beta)}   \bigg(  \big|s^{\valpha - \beta}  \big|  \left(\frac {1 + |\valpha -\beta| } {|s|} \right)^j  \bigg), 
	 \end{align}
 provided  that  $|\valpha|, |\beta| \Lt \sqrt{|s|}$.
\end{lem}

\begin{proof} 
	For  $|\valpha|, |\beta| \Lt \sqrt{|s|}$, it follows from \eqref{6eq: Stirling, 1} that
	\begin{align}\label{6eq: Stirling, 3}
		\frac {\Gamma (s+ \valpha)} {\Gamma (s + \beta)} = O_{\mathrm{Re} (\valpha-\beta)}  \big( \big|s^{\valpha - \beta}  \big|  \big), 
	\end{align}
and in general   \eqref{6eq: Stirling, 4} is deducible from \eqref{6eq: Stirling, 2}  and \eqref{6eq: Stirling, 3}. 
\end{proof}}

\subsection{The Mellin Technique} 
Actually, in our later analysis, those `inert' variables which do not occur in the phase function will be inessential and suppressed from the notation. However, one needs to separate these variables prior to the application of large sieve. For this technical purpose, the following Mellin expression from \cite[Lemma 12.3]{Qi-GL(3)}\footnote{Note that $R^2$ should be removed from $O ( R^2 S T^{-A} )$ in \cite[Lemma 12.3]{Qi-GL(3)}. } will be useful.

\begin{lem}\label{lem: w(z) Mellin} 
	Let $R, S >0$ and $T, X \geq 1$. Let the function $\varww \in C^{\infty} (A [R/3, 3 R])$ be $S$-bounded and  $X$-inert. We have
	\begin{align}\label{2eq: Mellin of w(z)}
		  \varww(z) = \viint_{\EuScript{A} } \xi (2\varnu, \vvarkappa)\overline{\vchi_{i\varnu, \vvarkappa}(z) }\nd \mu(\varnu, \vvarkappa), 
	\end{align}
	for any $z \in A[R/2, 2R]$, where the function $\xi (\varnu, \vvarkappa)$ has bounds $\xi (\varnu, \vvarkappa) = O (S)  $ and $\xi (\varnu, \vvarkappa) = O ( S T^{-A} )$ if $\sqrt{\varnu^2 + \vvarkappa^2}>  X T^{\vepsilon}$. 
\end{lem}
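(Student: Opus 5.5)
The plan is to localize $\varww$ to the annulus, take its Mellin transform on $\BC^{\times}$, invert, and get the decay of $\xi$ by integrating by parts in polar coordinates.

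First I localize. Fix a smooth function $\phi$ on $(0,\infty)$ supported in $[1/3,3]$ with $\phi \equiv 1$ on $[1/2,2]$, and set $W(z) = \phi(|z|/R)\,\varww(z)$. Then $W \in C_c^{\infty}(\BC^{\times})$ and $W = \varww$ on $A[R/2,2R]$. Since $|z|^{i+j}\partial^{i+j}\phi(|z|/R)/\partial z^i\partial\widebar z^j \Lt_{i,j} 1$ and $X \geqslant 1$, the Leibniz rule shows that $W$ is still $S$-bounded and $X$-inert on its support $A[R/3,3R]$.

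Next I define and invert the Mellin transform. With multiplicative measure $\nd^{\times} z = \nd z/|z|^2$, put
\begin{align*}
\xi(2\varnu,\vvarkappa) = c \viint_{\BC^{\times}} W(z)\, \vchi_{i\varnu,\vvarkappa}(z)\, \nd^{\times} z ,
\end{align*}
where $c$ is an absolute constant. In polar coordinates $z = e^{\rho} e^{i\theta}$ we have $\vchi_{i\varnu,\vvarkappa}(z) = e^{2i\varnu\rho} e^{i\vvarkappa\theta}$. So this is the Fourier transform on $\BR \times \BR/2\pi\BZ$ of $(\rho,\theta) \mapsto W(e^{\rho+i\theta})$, which is smooth and compactly supported in $\rho$. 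Fourier inversion (Fourier integral in $\rho$, Fourier series in $\theta$) then gives exactly \eqref{2eq: Mellin of w(z)} for $W$, once $c$ is chosen to match the normalization of $\nd\mu$ on $\EuScript{A}$. This representation holds for all $z$, and in particular on $A[R/2,2R]$, where $W = \varww$.

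The bound $\xi = O(S)$ is trivial: $|W| \Lt S$, and the support $A[R/3,3R]$ has $\nd^{\times}z$-measure $O(1)$, independently of $R$.

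For the decay, note that in polar coordinates
\begin{align*}
\partial_\rho = z\partial_z + \widebar z\partial_{\widebar z}, \qquad \partial_\theta = i\big(z\partial_z - \widebar z\partial_{\widebar z}\big).
\end{align*}
By the inert condition \eqref{2eq: inert annulus}, this gives $\partial_\rho^a\partial_\theta^b W \Lt_{a,b} S X^{a+b}$. Integrating by parts $a$ times in $\rho$ and $b$ times in $\theta$ (there are no boundary terms, by compact support and periodicity) yields
\begin{align*}
\xi(2\varnu,\vvarkappa) \Lt_{a,b} S\, X^{a+b}\, |\varnu|^{-a}\, |\vvarkappa|^{-b}.
\end{align*}
Suppose $\sqrt{\varnu^2+\vvarkappa^2} > X T^{\vepsilon}$. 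Then either $|\varnu|$ or $|\vvarkappa|$ exceeds $X T^{\vepsilon}/\sqrt2$. Taking the corresponding exponent large (and the other equal to zero) gives $\xi \Lt S T^{-\vepsilon A'}$ for every $A'$, which is $O(S T^{-A})$ after relabeling.

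The only point requiring care is the bookkeeping of the constant relating $\nd\mu$ and the factor $2\varnu$ to the Fourier normalization, together with checking that the cutoff does not destroy $X$-inertness. Both are routine, so I do not expect a real obstacle in this lemma.
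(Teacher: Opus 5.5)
Your proposal is correct and follows the standard route. The paper gives no proof of its own here; it quotes the lemma from \cite[Lemma 12.3]{Qi-GL(3)}. Your argument (smooth cutoff, Mellin inversion on $\BC^{\times}$, and repeated partial integration in polar coordinates via $\partial_\rho = z\partial_z + \widebar z \partial_{\widebar z}$, $\partial_\theta = i(z\partial_z - \widebar z\partial_{\widebar z})$) is the usual proof of such a statement. Working with $\nd z/|z|^2$ is what gives $\xi = O(S)$ with no factor $R^2$, consistent with the paper's footnote correcting the cited bound. The factor $2$ between $\xi(2\varnu,\vvarkappa)$ and $\xi(\varnu,\vvarkappa)$ in the decay range is harmlessly absorbed.
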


More importantly, in order to separate the variables in the phase,  we need  the following Mellin expression of $e [z]$ from \cite[Lemma 12.4]{Qi-GL(3)}. 
\begin{lem}\label{lem: e[z] Mellin}
	Let $R \Gt 1$.    Then  
	\begin{align}\label{2eq: Mellin}
		e [z] = \viint_{ {\EuScript{A}}}  \xi_R  (2 \varnu, \vvarkappa          )  \overline{\vchi_{i \varnu,  \vvarkappa          } (z)} \nd \mu (\varnu, \vvarkappa          )  , 
	\end{align}
	for  any $ z \in A[ R/ {2} , 2 R]  $, where $ \xi_R (\varnu, \vvarkappa          ) = O ( (R + |\varnu      |+| \vvarkappa          |)^{-A} )  $ unless $ \sqrt{\varnu^2 + \vvarkappa^2} \asymp R$, in which case $  \xi_R (\varnu, \vvarkappa          ) = O ((\log R) / R) $.  
\end{lem}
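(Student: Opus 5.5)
The plan is to construct $\xi_R$ explicitly as the Mellin transform on $\BC^\times$ of a smoothly truncated $e[z]$, and then to bound it by stationary phase in polar coordinates. Fix a smooth bump $\phi_R$ supported on $A[R/3, 3R]$ that equals $1$ on $A[R/2, 2R]$ and is inert in the sense of \S \ref{sec: inert functions}, i.e.\ $|z|^{i+j}\partial^{i+j}\phi_R/\partial z^i\partial\widebar z^j \Lt_{i,j} 1$. Define
\begin{align*}
	\xi_R (2\varnu, \vvarkappa) = c_0 \viint_{\BC^\times} \phi_R (z) e[z] \vchi_{i\varnu, \vvarkappa}(z) \frac{\nd z}{|z|^2},
\end{align*}
with $c_0$ the constant making Mellin inversion on $\BC^\times = \BR_{>0}\times S^1$ exact with respect to $\nd\mu$ (the $2\varnu$ normalization is absorbed into $c_0$). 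In the coordinates $z = \rho e^{i\theta}$, $u = \log\rho$, this is ordinary Fourier analysis in $(u,\theta)\in\BR\times\BR/2\pi\BZ$. Fourier inversion then gives \eqref{2eq: Mellin} with $\phi_R(z)e[z]$ on the left, and this equals $e[z]$ on $A[R/2,2R]$.

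For the bounds, write the integrand as $\phi_R\, e^{i\Phi}$ with
\begin{align*}
	\Phi(u,\theta) = 4\pi e^{u}\cos\theta + 2\varnu u + \vvarkappa\theta,
\end{align*}
since $\Tr(z) = 2\rho\cos\theta$. Its gradient is
\begin{align*}
	\big(4\pi\rho\cos\theta + 2\varnu,\ -4\pi\rho\sin\theta + \vvarkappa\big),
\end{align*}
whose length differs from $|(2\varnu,\vvarkappa)|$ by at most $4\pi\rho\asymp R$. Hence, unless $\sqrt{\varnu^2+\vvarkappa^2}\asymp R$ (with suitable constants), the gradient is $\Gt R + |\varnu| + |\vvarkappa|$. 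All derivatives of $\Phi$ of order $\geqslant 2$ are $O(R)$, and $\phi_R$ is inert in $(u,\theta)$. So Lemma \ref{lem: staionary phase, dim 2, 2} applies with $Q = \varPhi = P = \varUpsilon = 1$, $Z = R$, and gradient lower bound $R' \asymp R+|\varnu|+|\vvarkappa|$. It gives
\begin{align*}
	\xi_R \Lt \Big(\frac{R^2}{R'^3} + \frac{1}{R'}\Big)^A \Lt (R+|\varnu|+|\vvarkappa|)^{-A}.
\end{align*}

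In the remaining range $\sqrt{\varnu^2+\vvarkappa^2}\asymp R$ we need $O((\log R)/R)$ rather than the trivial $O(1)$. The key observation is that the Hessian of $\Phi$ in $(u,\theta)$,
\begin{align*}
	\begin{pmatrix} 4\pi\rho\cos\theta & -4\pi\rho\sin\theta \\ -4\pi\rho\sin\theta & -4\pi\rho\cos\theta \end{pmatrix},
\end{align*}
has determinant $-(4\pi\rho)^2$. It is therefore uniformly non-degenerate, of size $R^2$, on the whole support, so two-dimensional stationary phase should save $R^{-1}$. Rather than invoke a full stationary phase expansion, I would argue by a dyadic decomposition of the $(u,\theta)$-domain according to the size $\varDelta$ of $|\nabla\Phi|$. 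Because the Hessian is invertible with norm $\asymp R$, the map $(u,\theta)\mapsto\nabla\Phi$ is a local diffeomorphism with Jacobian $\asymp R^2$. The region where $|\nabla\Phi|\sim\varDelta$ thus has area $\Lt \varDelta^2/R^2$ (for $\varDelta\geqslant R^{1/2}$), and the small region $|\nabla\Phi|\Lt R^{1/2}$ has area $\Lt 1/R$. On each dyadic piece, after a smooth partition adapted to scale $\varDelta/R$, Lemma \ref{lem: staionary phase, dim 2, 2} gives a negligible contribution unless $\varDelta\Lt R^{1/2+\vepsilon}$. Summing the $O(\log R)$ dyadic ranges then yields $O((\log R)/R)$.

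The main obstacle I expect is exactly this last step. The partition at scale $\varDelta/R$ must be arranged so that the hypotheses of Lemma \ref{lem: staionary phase, dim 2, 2} hold uniformly: $P,\varUpsilon\asymp\varDelta/R$ with $R\varDelta\cdot(\varDelta/R)\geqslant R^{\vepsilon}$. The periodicity in $\theta$ must also be handled, since $\nabla\Phi$ has at most two zeros modulo $2\pi$ and the local-diffeomorphism argument has to be applied near each separately. If this bookkeeping becomes awkward, I would fall back on doing the $\theta$-integral first. It is a Bessel function $J_{\vvarkappa}(4\pi\rho)$ up to the smooth weight, and the uniform Bessel asymptotics, followed by one-dimensional stationary phase via Lemma \ref{lem: staionary phase, dim 1, 2} in $u$, should produce the same $R^{-1}$ saving with a logarithmic loss.
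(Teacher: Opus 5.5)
The paper does not prove this lemma; it imports it from \cite[Lemma 12.4]{Qi-GL(3)}. So your argument has to stand on its own, and in outline it does. The ingredients are: smooth truncation, Mellin inversion (Fourier analysis in $(u,\theta)$), integration by parts off the annulus $\sqrt{\varnu^2+\vvarkappa^2}\asymp R$, and a non-degenerate Hessian on the annulus. The non-stationary part is fine as written. In the stationary range, however, two pieces of bookkeeping are essential, not cosmetic.

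\textbf{Rescaling on dyadic pieces.} On a piece where $|\nabla\Phi|\sim\varDelta\leqslant R$, you must rescale the phase parameters as well as the weight parameters. Take $Q=\varPhi=P=\varUpsilon\asymp\varDelta/R$ and $Z\asymp\varDelta^2/R$ in Lemma \ref{lem: staionary phase, dim 2, 2}. This is admissible because every derivative of order $\geqslant 2$ is $O(R)$ and $\varDelta\leqslant R$, and then the bracket is $\asymp R/\varDelta^2$. If you keep $Q=\varPhi=1$ and $Z=R$ as in the non-stationary range, the term $Z^2/(R'^3Q^3)=R^2/\varDelta^3$ dominates. You then get decay only for $\varDelta\geqslant R^{2/3+\vepsilon}$, and the leftover region has area about $R^{-2/3}$. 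Relatedly, your condition should read $\varDelta\cdot\varDelta/R\geqslant R^{\vepsilon}$ (gradient bound times localisation scale), not $R\varDelta\cdot\varDelta/R\geqslant R^{\vepsilon}$.

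\textbf{Quantitative bound instead of a dichotomy.} The dichotomy ``negligible for $\varDelta\geqslant R^{1/2+\vepsilon}$, bounded trivially by area below that'' only yields $O(R^{-1+2\vepsilon})$, not $O((\log R)/R)$. Instead, use the quantitative bound $(\varDelta/R)^2(R/\varDelta^2)^A$ for every dyadic $\varDelta\geqslant R^{1/2}$. With $A=1$ each piece is $O(1/R)$, which is exactly your $\log R$. With $A=2$ the sum converges and gives $O(1/R)$.

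\textbf{Periodicity and the number of zeros.} Write $w=u+i\theta$ and $F(w)=4\pi e^{w}+(2\varnu-i\vvarkappa)w$; then $\Phi=\mathrm{Re}\,F(w)$. The gradient $\nabla\Phi$ is $\overline{F'(w)}$, and $F'(w)=4\pi e^{w}+2\varnu-i\vvarkappa$ is injective on $\BC/2\pi i\BZ$ with Jacobian $|F''(w)|^2=(4\pi\rho)^2$. So there is exactly one critical point, and your area estimates hold globally; your concerns about periodicity and ``at most two zeros'' disappear.

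\textbf{A shorter route.} The same observation gives a shorter argument within the paper's own toolkit. Near the critical point $w_0$, write $F(w)=F(w_0)+2\pi e^{w_0}v^2$, with $v$ a holomorphic function of $w-w_0$ on a disc of fixed radius. The localised integral becomes $\viint\varww(v)\,e[e^{w_0}v^2/2]\,\nd v$ with $|e^{w_0}|\asymp R$, and Lemma \ref{lem: Weil} bounds it by $O(1/R)$ directly. This is the same manoeuvre as in the proof of Lemma \ref{lem: v (u; theta)}. Outside that disc $|\nabla\Phi|\Gt R$, and your non-stationary bound applies.

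Your dyadic argument is more elementary and uses only integration by parts. The Morse--Weil route avoids all the dyadic bookkeeping. Either way the $\log R$ in the statement is slack.
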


Of course the $2$ and $3$ in Lemmas \ref{lem: w(z) Mellin}  and \ref{lem: e[z] Mellin}  may be replaced by larger absolute constants for our convenience. 

\begin{defn}\label{defn: square}
	For $U \Gt 1$, define the square 
	\begin{align}
		\EuScript{A}_{\vepsilon} (U) = \big\{ (\varnu, \vvarkappa) \in \EuScript{A} : |\varnu | , \, |\vvarkappa| \leqslant U T^{\vepsilon} \big\},
	\end{align}
and the annulus
\begin{align}
	\EuScript{A}  (U) = \big\{ (\varnu, \vvarkappa) \in \EuScript{A} : \sqrt{\varnu^2 + \vvarkappa^2} \asymp U \big\}. 
\end{align}
\end{defn}

The integrals in \eqref{2eq: Mellin of w(z)} and \eqref{2eq: Mellin} may be effectively restricted to $\EuScript{A}_{\vepsilon} ( X )$ and $\EuScript{A}  ( R )$ respectively.  

	\subsection{Hecke--Maass Forms on {\large$\BH^3$}}  
Finally  we briefly review the theory of Hecke--Maass forms on $  \BH^3$.  For more details, the reader is referred to \cite{EGM}, \cite[\S 2.1]{Qi-Liu-LLZ} or \cite[\S 3]{Qi-Liu-Moments}.

Let $$\BH^3 = \left\{ w = z + j r  : z = x+iy \in\BC, r \in \BR_+ \right\}  
$$ denote the 3-dimensional hyperbolic space, with the action of $\SL_2 (\BC)$  given by 
\begin{align*}
	z (g  \cdot   w) = \frac { (az+ b) (\overline {c} \overline z + \overline d)   +   a \overline c r^2 } {|cz+d|^2  +  |c|^2 r^2} , \hskip 5 pt r (g   \cdot   w) = \frac {r  } {{|cz+d|^2 + |c|^2 r^2}}, \quad g = \begin{pmatrix}
		a \hskip -1pt & b \\ 
		c \hskip -1pt & d \\
	\end{pmatrix} .
\end{align*}  
The space $\BH^3$ is equipped with the $\SL_2 (\BC)$-invariant hyperbolic metric $    ( \nd x^2 + \nd y^2 + \nd r^2) / r^2$ and hyperbolic measure $    \nd x\, \nd y\, \nd r / r^3$.  The   hyperbolic Laplace--Beltrami operator is given by $\varDelta =    r^2 \lp \partial^2/\partial x^2 + \partial^2/\partial y^2 + \partial^2/\partial r^2  \rp -   r \partial /\partial r$.

	Let $\varGamma = \mathrm{PSL}_2 (\SO)$ be the Bianchi group. For a non-zero ideal $\frq \subset \SO$, define the Hecke congruence subgroup $\varGamma_0(\frq) \subset \varGamma$ as follows:
\begin{equation*}
	\varGamma_0(\frq) = \left\{ 
	\begin{pmatrix}
		a & b \\
		c & d
	\end{pmatrix}
	\in \varGamma : \, c\equiv 0\, (\mathrm{mod}\, \frq) \right\}.
\end{equation*}
Let  $ \SB (\frq) $ be an orthonormal basis of Maass cusp forms  in the  $L^2$-cuspidal spectrum  of the  Laplace--Beltrami operator  $\varDelta$ on $  \varGamma_0(\frq) \backslash \BH^3$, with respect to the $L^2$-norm 
\begin{align*}
	\|f\|_{\varGamma_0(\frq)}^2 = \frac 1 {\mathrm{Vol} (\varGamma_0(\frq) \backslash \BH^3)} \int_{ \varGamma_0(\frq) \backslash \BH^3 } |f (w)|^2 \frac {\nd x\, \nd y\, \nd r}  {r^3} .
\end{align*} 
For $f  $ with Laplacian eigenvalue $   1 + 4  t_f^2  $, we have the Fourier expansion: 
 \begin{align*}
 	f  ( z, r ) =   \frac 1 {\sqrt{|d_F|}} \sum_{ m  \in \SO' \smallsetminus \{0\} } a_f ( \updelta_F m) r K_{ 2 i t_f} (4 \pi |m| r) e [ m z]. 
 \end{align*} 
The normalizing factors $1/\mathrm{Vol} (\varGamma_0(\frq) \backslash \BH^3)$ and $1/   {\sqrt{|d_F|}}$ are adopted from \cite{Koyama-PGT-Picard} and \cite{Venkatesh-BeyondEndoscopy,Qi-Liu-Moments}. According to the Kim--Sarnak bound in \cite{Blomer-Brumley}, we know that $t_f$ is either real or purely imaginary with $ |\mathrm{Im} (t_f) | \leqslant 7/64 $. 

Let us assume that the basis $ \SB = \SB (1) $ is contained in $  \SB (\frq) $. 

For any $\epsilon \in \SO^{\times}$ and non-zero $n \in \SO $, we have $a_f (\epsilon^2 n) = a_f (n)$ since the diagonal matrix $\mathrm{diag} (\epsilon, \epsilon^{-1})$ lies in $ \varGamma_0 (\frq) $.  It is therefore legitimate to write $ a_f (  \frn^2) =  a_f (  n^2)$ for $ \frn = (n )   $. 

Finally, we consider the Hecke--Maass forms on the Bianchi manifold $ \varGamma \backslash \BH^3 $.  For  $n \in \SO \smallsetminus \{0\} $  define the Hecke operator $T_{n}$ by
\begin{equation*}
	T_{n} f ( w  ) = \frac 1 {w_F \sqrt {\RN (n)}} \sum_{ a d = n} \ \sum_{ b (\mathrm{mod}\, d)} f \lp \begin{pmatrix}
		a & b \\
		& d
	\end{pmatrix}
	w \rp .
\end{equation*}
Hecke operators commute with each other as well as  the Laplace--Beltrami operator. Thus we may further assume that every $f \in \SB  $ is an eigenfunction of all the Hecke operators $T_n$. Let $\lambda_f (n)$ denote the Hecke eigenvalue of $T_n$. It is known that $ \lambda_f (n)$ are all real.  As usual $ \lambda_f (n) $ and $a_f (  n)$ are proportional:
\begin{align}\label{2eq: rho(n) = lambda(n)}
a_f   (  n) = a_f   ( 1 )\cdot	\lambda_f (n) . 
\end{align}
By the discussion above, one may write $ \lambda_f (\frn^2) = \lambda_f (n^2) $ for $ \frn = (n) $.

\section{Initial Reductions} 

First, by standard  argument using the approximate functional equation and the Kuznetsov trace formula, we would like to reduce the problem to the estimation of certain Kloosterman--Bessel sums as in Proposition \ref{prop: initial reduction}. 

Similar to \cite{Blomer-Hecke-Quad,Khan-Young-Sym2}, for technical reasons we shall embed $ \SB \hookrightarrow \SB (16)$ and use the Kuznetsov formula for $ \varGamma_0 (16) $. See Remark \ref{rem: 4 | n1n2}.  In order to lighten the burden of notation,  the contributions from  Eisenstein series  will  not be written down explicitly (as will be dropped entirely by non-negativity). The reader may find  the Kuznetsov formula for  $\varGamma_0 (\frq)$ in \cite[Theorem 6.1]{BM-Kuz-Spherical}, \cite[Proposition 1]{Venkatesh-BeyondEndoscopy}, or \cite[Theorem 11.3.3]{B-Mo2},  though the spectral sides therein are not as explicit as that in  \cite[Theorem 16.3]{IK}\footnote{\cite[Proposition 2.1]{Qi-Liu-LLZ} is more explicit but the level $\frq$ is assumed to be square-free.}.

\subsection{Symmetric Square $L$-Functions} \label{sec: L-function}

\delete{and express the Rankin--Selberg $L (s, f \times f)$ as 
	\begin{align*}
		L (s, f \times f) = \zeta_F (s) \zeta_F (2s)^{-1} L (s, \mathrm{Sym}^2 f ) . 
\end{align*}}

Let $f \in \SB$.  For $\mathrm{Re} (s) > 1$, the symmetric square $L$-function is defined by
\begin{align*}
	L (s, \mathrm{Sym}^2 f ) = \sum_{\frn \subset \SO} \frac {c_{  f}  (\frn) } {\RN(\frn)^{s} }  ,  \qquad c_{  f}  (\frn) = \sum_{ \mathfrak{l}^2 \mathfrak{m} = \frn } \lambda_{\uppii} (\frm^2).  
\end{align*}
It will be more convenient to use the expression: 
\begin{align*}
	L (s, \mathrm{Sym}^2 f ) = \zeta_{F} (2s) \sum_{\frn \subset \SO} \frac {\lambda_{f} (\frn^2) } {\RN(\frn)^{s}}, 
\end{align*}
where $\zeta_F (s)$ is  the Dedekind $\zeta$ function  $$ \zeta_F (s) = \sum_{\frn \subset \SO} \frac {1 } {\RN(\frn)^{s}} . $$  
The gamma factor of $\mathrm{Sym}^2 f $ is equal to $ (2\pi)^{-3 s} \gamma (s, t_f ) $, with 
\begin{align*}
	\gamma (s, t ) = \Gamma (s) \Gamma (s+2it  ) \Gamma (s-2it   ). 
\end{align*}
It is known   that $ L (s, \mathrm{Sym}^2 f ) $ is entire (\cite{Shimura-Sym2,GJ-GL(2)-GL(3)}) and satisfies the functional equation
\begin{align*}
	\Lambda (s, \mathrm{Sym}^2 f ) = \Lambda (1-s, \mathrm{Sym}^2 f ), 
\end{align*}
where $$ \Lambda (s, \mathrm{Sym}^2 f ) = \sigma_F^{-3s} \gamma (s, t _f ) L (s, \mathrm{Sym}^2 f) , \qquad \sigma_F = 2\pi / \sqrt{|d_F|}.  $$

\subsection{Approximate Functional Equation} 
Let $\mathrm{Re} (s) = 1/2$ and $|s| < T$. 
Suppose that  $||t_f|-T | <  T^{1-\vepsilon}$ ($t_f$ real). 

By the approximate functional equation in \cite[Theorem 5.3]{IK} (see also \cite[Lemma 3.1]{Iwaniec-Michel-Sym2}), we have
\begin{align*}
	L (s, \mathrm{Sym}^2 f) =	\sum_{ \frn \subset \SO }  \frac {   \lambda_{f}  (\frn^2 )   } { \RN (\frn)^s  }     V_{s}     ( \sigma_F^3  \RN(\frn) ; t_f  ) + \epsilon_F (s, t_f) \sum_{ \frn \subset \SO }  \frac {   \lambda_{f}  (\frn^2 )   } { \RN (\frn)^{1-s}  }     V_{1-s}     (  \sigma_F^3  \RN(\frn)  ; t_f )  , 
\end{align*}
where
\begin{align*}
	\epsilon_F (s, t) = \sigma_F^{3(2s-1)} \frac {\gamma (1-s, t)} {\gamma (s, t)}   
\end{align*}
has  norm unity (for  $\mathrm{Re} (s) = 1/2$). 
\cite[Proposition 5.4]{IK} implies that  one may effectively restrict the sums above to the range  $ \RN (\frn) \leqslant  |s| T^{2+\vepsilon} $ at the cost of a negligibly small error. In order to separate  $t_f$ from $ \RN (\frn) $, we use  the following expression  due to Blomer \cite[Lemma 1]{Blomer}  (slightly modified): 
\begin{align*}
	V_s (y; t) =  \frac 1 {2   \pi i }   \int_{   \vepsilon - i U}^{\vepsilon + i U}    y^{ - v}  \zeta_{F} (2s+2v) \frac {\gamma (s+ v, t      )} {\gamma (s, t      )} \exp ({v^2} )    \frac {\nd v} {v} +  {O_{\vepsilon}} \bigg( \frac { T^{  \vepsilon} } {y^{ \vepsilon} \exp ({U^2/2}) } \bigg) .
\end{align*}
The error term above is negligibly small if we choose $ U = \log T$. Note that  for any $v$ on the integral contour  
\begin{align*}
	\frac {\gamma (s+ v, t    )} {\gamma (s, t    )}  \exp ({v^2} )  = O_{\vepsilon}  ( T^{\vepsilon} )  , 
\end{align*} 
by the Stirling formula.  On a smooth dyadic partition and Cauchy--Schwarz, we infer that up to a negligible error 
\begin{align}\label{2eq: AFE} 
	| L (s, \mathrm{Sym}^2 f) |^2 \Lt  {T^{\vepsilon}}     \sum_{N \leqslant |s|T^{2+\vepsilon}}  \frac{ 1 } {N } \int_{    \vepsilon - i  \log T}^{\vepsilon + i \log T} \bigg| \sum_{ \RN (n) \sim N }    {  \lambda_f (  n^2)   } \varww_{s, v} \bigg( \frac {n} { \sqrt{N}} \bigg)        \bigg|^2 \nd v ,
\end{align}
where   $N$ are dyadic in the form $ 2^{k/2}  $ ($k \geqslant -1$), and
\begin{align*}
	\varww_{s, v} (z) =   {\varvv (|z| )} / {|z|^{2s+2v} } 
\end{align*}
for a certain fixed $\varvv \in C_c^{\infty} [1,  \sqrt{2}]$. Note that  $  \varww_{s, v}  (z)$ is $X$-inert (see \S \ref{sec: inert functions}) if we set
\begin{align}\label{3eq: X}
	X = |s| + \log T . 
\end{align} 

\subsection{Application of Kuznetsov} 

For $ T^{\vepsilon} \leqslant M \leqslant T^{1-\vepsilon} $ define the spectral weight function 
\begin{align*}
	h (t) =   \exp \hskip -1pt \lp - \frac {(t-T)^2} {M^2}   \rp + \exp \hskip -1pt \lp - \frac {(t+T)^2} {M^2}   \rp,
\end{align*}
and  consider
\begin{align}\label{3eq: sum, 1}
	\sum_{f \in \SB}  \omega_f h (t_f) |L(s, \mathrm{Sym}^2 f) |^2, 
\end{align}
where the harmonic weight $\omega_f $ is given by 
\begin{align*}
	\omega_f = \frac { t_f |a_f   ( 1 )  |^2 } {\sinh (2\pi t_f)} ,
\end{align*}
which is harmless as $ 1/ \omega_f $ is a multiple of $ L (1, \mathrm{Sym}^2 f)$ by Rankin--Selberg,  while $$L (1, \mathrm{Sym}^2 f) \Lt (1+ |t_f|)^{\vepsilon}$$  
by \cite[Theorem 1]{Molteni-L(1)}. Our aim is to prove that \eqref{3eq: sum, 1} is bounded by $ O_{\vepsilon} ( { |s|^{12} T^{4+\vepsilon}} /{M}) $ for $|s| < T$.

We start by inserting into \eqref{3eq: sum, 1} 
the bound for $|L(s, \mathrm{Sym}^2 f)|^2$ as in \eqref{2eq: AFE},  with $\lambda_f (\frn^2)$ replaced by $ a_f (  \frn^2) / a_f   ( 1 )  $ as in \eqref{2eq: rho(n) = lambda(n)} (implicitly, our attention has been restricted to the range $ | |t|  - T| < M^{1+\vepsilon}$ since $h (t )$ is exponentially small if otherwise). However,   the integral on $v$ and the sum on dyadic $N$ will not be in our concern as  by trivial estimation we only lose $O  (\log^2 T)$.    Indeed, the problem is reduced to prove 
\begin{align}\label{3eq: bound, 1}
 	\frac 1 {N} \sum_{f \in \SB} \frac { t_f  h (t_f) } {\sinh (\pi t_f)}   \bigg|\sum_{ \RN (n) \sim N }    {  a_f (  n^2)   } \varww \bigg(  \frac {n} { \sqrt{N}} \bigg)  \bigg|^2 \Lt \frac{ X^{12} T^{4+\vepsilon}} {M} ,  
\end{align} 
(uniformly) for any   $
	 N <  {X} T^{ 2+\vepsilon}$,  $X < T$   
(see \eqref{1eq: s<T} and \eqref{3eq: X}),   and any $X$-inert function $\varww (z)$ supported on $ A [1,  \sqrt{2}]$ (see \S \ref{sec: inert functions}). 

Next we extend the sum in \eqref{3eq: bound, 1} onto $\SB (16) $ and include the integrals from the Eisenstein series.   
By opening the square, we obtain 
\begin{align}\label{3eq: spectral}
\frac 1 {N} \mathop{\sum \sum}_{ n_1, n_2 } \varww \bigg( \frac {n_1} { \sqrt{N}} \bigg)  \overline{\varww} \bigg(  \frac {n_2} { \sqrt{N}} \bigg) 	\Bigg\{ \sum_{f \in \SB (16)} \frac { t_f h (t_f) } {\sinh (2\pi t_f)} a_f     ( n_1^2) \overline{a_f ( n_2^2)} + \mathrm{CSC} \Bigg\}   , 
\end{align}
where CSC stands for the continuous spectral contribution. 
The Kuznetsov formula says that up to a constant multiple the expression between the large parentheses  in  \eqref{3eq: spectral} equals 
\begin{align}\label{3eq: Kuznetsov}
	  {2\sqrt{|d_F|}}  \,  \delta_{ (n_1 ),  (n_2)}   \,  \SDH +   \sum_{16 \shskip | \shskip c }  \frac {S (n_1^2, n_2^2; c)} {\RN (c)} \SDH \bigg( \frac {n_1 n_2} { \updelta_F c} \bigg), 
\end{align}
where $ \delta_{ (n_1 ),  (n_2)}  $ is the  Kronecker $\delta$ symbol (for ideals),  $ \SDH  $ and $ \SDH (z)$ are the Plancherel and Bessel integrals defined by  
\begin{align}\label{1eq: defn Bessel integral}
	\SDH = \int_{-\infty}^{\infty} h (t) t^2 \nd \shskip t , \qquad \SDH (z) = \int_{-\infty}^{\infty} h (t)  	\boldsymbol{J}_{i t} ( z ) t^2 \nd \shskip t,
\end{align} 
with the Bessel kernel
\begin{equation}\label{2eq: Bessel Bs(z)}
	\boldsymbol{J}_{\vnu} (z ) =  \frac {2\pi^2} {\sin ( 2\pi \vnu) } \big( { \textstyle  J_{- 2 \vnu} (4 \pi   {z}) J_{- 2 \vnu} (4 \pi   { \widebar{z}}) - J_{  2\vnu} (4 \pi   {z}) J_{ 2\vnu} (4 \pi   { \widebar{z}}) } \big). 
\end{equation}  
Consequently, the diagonal term contributes $ O (   M T^{2+\vepsilon}) $, while the off-diagonal sum reads 
\begin{align}\label{3eq: off-diag, 1}
\frac 1 {N}  \sum_{ 16 \shskip | \shskip c } \mathop{\sum \sum}_{n_1, n_2}  \frac {S (n_1^2, n_2^2; c)} {\RN (c)} \varww \bigg( \frac {n_1} { \sqrt{N}} \bigg)  \overline{\varww} \bigg(  \frac {n_2} { \sqrt{N}} \bigg)  \SDH \bigg( \frac {n_1 n_2} { \updelta_F c} \bigg) .  
\end{align}

\subsection{Further Reductions}
The Bessel integral has been studied in \cite[\S 3]{Qi-Liu-LLZ}. In particular, by the remark below   \cite[Lemma 3.2]{Qi-Liu-LLZ}  we have  $\SDH (z) = O (M T^{-A} |z|^2 )$ for $|z| \leqslant 1$ and by  \cite[Lemma 3.5 (1)]{Qi-Liu-LLZ} we have $\SDH (z) = O (T^{-A})$ for $1 < |z| \Lt T$. Thus we may further restrict the $c$-sum to the range $ |c| \Lt  {N /T} $ at the cost of a negligible error. It will also be  convenient to apply a dyadic partition to the $c$-sum. 

More importantly, we have the integral representation from  \cite[Lemma 3.4]{Qi-Liu-LLZ}:
\begin{align}\label{3eq: H(z)} 
	\SDH (z) =    MT^2  \viint_{ \widehat{\EuScript{A}} }  g (M r) e (  2 Tr/\pi +  4 \mathrm{Re} (z  \trh (r, \omega)  ))  \nd \widehat{\mu} (r, \omega)  , 
\end{align}
where $g (r)$ is a Schwartz function and  $ \trh (r, \omega)$ is the trigonometric-hyperbolic function 
\begin{align}\label{3eq: trh}
	 \trh (r, \omega) = \cosh r \cos \omega + i \sinh r \sin \omega. 
\end{align}
For simplicity, we prefer not to truncate the $r$-integral at $|r| = M^{\vepsilon} / M$. 
Finally, we  extract the exponential factor $e_F [ 2 n_1 n_2/ c]$ from the Bessel integral $\SDH ({n_1 n_2} / { \updelta_F c})$ to join the Kloosterman sum $S (n_1^2, n_2^2; c)$. To this end, let us introduce the variant Bessel integral:
\begin{align}\label{3eq: defn of I(z)}
	\SDI (z) = MT^2   \viint_{ \widehat{\EuScript{A}} }  g (M r) e (  2 Tr/\pi +  2 \mathrm{Re} (z  \psi (r, \omega)   ))  \nd \widehat{\mu} (r, \omega) ,
\end{align}
with 
\begin{align}\label{3eq: defn of psi}
	\psi (r, \omega) = 2 (\trh (r, \omega) - 1). 
\end{align}

By the discussions above, it is now reduced to prove the following proposition.

\begin{prop}\label{prop: initial reduction}
	 Let  $1 \Lt X < T$,   $ 1 \Lt N < X T^{ 2+\vepsilon}$, and $ 1 \Lt C \Lt (N/T)^2 $. Let   $\varww (z)$ be an $X$-inert function supported on $ A [1,  {2}]$ {\rm(}see {\rm\S \ref{sec: inert functions}}{\rm)}.  Define 
	 \begin{align}\label{3eq: S(N)}
	 	S (N, C) = \frac 1 {N} \sum_{ 16 \shskip | \shskip c \, : \, \RN (c) \sim C } \mathop{\sum \sum}_{\RN(n_1), \RN (n_2) \sim N}  \frac {S (n_1^2, n_2^2; c) e_F [2n_1n_2/c]} {\RN (c)}   \varww  \bigg( \frac {n_1} { \sqrt{N } },  \frac {n_2} { \sqrt{N } }; \frac {N } { \updelta_F c}  \bigg),
	 \end{align}
 for  
 \begin{align}\label{3eq: defn of w}
 	\varww (z_1, z_2; \varLambda) = \varww (z_1) \overline{\varww} (z_2) \SDI (\varLambda z_1 z_2). 
 \end{align}
Then 
\begin{align}
	S (N, C) \Lt  \frac{ X^{12} T^{4+\vepsilon}} {M}.  
\end{align}
\end{prop}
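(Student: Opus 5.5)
We follow the route outlined in the introduction: a double Poisson summation in $n_1,n_2$ modulo $c$, followed by the Weil identity and the quadratic large sieve of Lemma \ref{lem:Heath-Brown LS}.

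\emph{Step 1: the Bessel weight.} We first analyse $\SDI(\varLambda z_1 z_2)$ with $\varLambda = N/\updelta_F c$, so $|\varLambda| \asymp N/\sqrt{C}$. On the effective support $|r| \Lt M^{\vepsilon}/M$ one has $\psi(r,\omega) = 2(\cos\omega - 1) + O(|r|)$, so the phase $2\mathrm{Re}(z\psi)$ is stationary only near $\omega = 0$. Thus
\begin{itemize}
\item $\omega$ is confined to $|\omega| \Lt (\sqrt{C}/N)^{1/2} T^{\vepsilon}$, and
\item on that range $z\psi(r,\omega)$ is approximately $z(-\omega^2 + 2i r\omega + r^2)$.
\end{itemize}
Integrating by parts in $r$ against $e(2Tr/\pi)$ shows that the integral is negligible unless $|z| r \omega$ can compensate $T r$. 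This forces $|\omega| \asymp T/|z|$, hence $N \Gt T\sqrt{C}$; this recovers the range $C \Lt (N/T)^2$. Consequently $\varww(z_1,z_2;\varLambda)$ behaves like a function of size $MT^2 \cdot (1/M)\cdot (T/|\varLambda|)$ times an oscillation that is essentially quadratic in $z_1 z_2$.

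Rather than evaluate this in closed form, we keep $\SDI$ as an integral over $(r,\omega)$ and interchange it with the $n$-sums. The $n$-dependence then enters through $e[\varLambda' n_1 n_2]$ with $\varLambda'$ depending on $(r,\omega)$, together with the inert factor $\varww(z_1)\overline{\varww}(z_2)$.

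\emph{Step 2: double Poisson and the Weil identity.} We apply Corollary \ref{cor: Poisson} to $n_1$ and then to $n_2$ modulo $c$. The arithmetic part is the complete sum
\begin{align*}
\mathop{\sum\sum}_{a_1,a_2 \bmod c} S(a_1^2,a_2^2;c)\, e_F[(2a_1a_2 - m_1a_1 - m_2a_2)/c].
\end{align*}
Opening the Kloosterman sum, $a_1^2\valpha + 2a_1a_2 + a_2^2\widebar{\valpha} = \valpha(a_1 + \widebar{\valpha}a_2)^2$ is a perfect square. Hence the sum factors into a Gauss sum, and we expect it to equal roughly $\RN(c)^{3/2}$ times $\vchi_{c}$ evaluated at a quantity like $m_1 m_2$, with a phase $e_F[-\overline{4}\, m_1\widebar{m_2}\cdots/c]$ that collapses. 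The restriction $16 \mid c$ is there to control the $2$-adic part of this computation.

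On the analytic side we get the double Fourier transform of $\varww(z_1)\overline{\varww}(z_2)\, e[\varLambda' z_1 z_2]$. After a linear change of variables this is a quadratic exponential, and it is exactly the situation of the Weil identity \eqref{2eq: Weil}. Lemma \ref{lem: Weil} then gives an inert function times $e[-m_1m_2/\varLambda'']$, with the dual lengths $\RN(m_i) \Lt C X^2 T^{\vepsilon}/N$. The zero frequencies must be treated separately; we expect them to be a lower-order contribution. We then bound the remaining $(r,\omega)$-integral trivially, as announced in the introduction.

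\emph{Step 3: separating variables and the large sieve.} The phase $e[m_1m_2/(\cdots)]$ and the inert weights are separated with Lemmas \ref{lem: e[z] Mellin} and \ref{lem: w(z) Mellin}, at a cost of powers of $X$ and $T^{\vepsilon}$. This leaves a bilinear form
\begin{align*}
\sum_{c}\sum_{m_1,m_2} \alpha_{m_1}\beta_{m_2}\vchi_{c}(m_1m_2)
\end{align*}
with coefficients bounded by $1$. We then
\begin{itemize}
\item write $c$ as $16$ times a square times a square-free part, and
\item apply Cauchy--Schwarz together with Lemma \ref{lem:Heath-Brown LS}, with modulus length $Q = C$ and sequence length $N' \asymp CX^2/N$.
\end{itemize}
This saves a factor $\sqrt{C}$ over the trivial bound, which combined with Step 1 should give roughly $T^{3+\vepsilon}\cdot T/M$, i.e. the bound $X^{12}T^{4+\vepsilon}/M$.

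The main obstacle is Step 2. One must evaluate the complete exponential sum exactly, including the non-square-free and $2$-adic parts of $c$ and the degenerate frequencies $m_1m_2 = 0$. One must also check that the Weil-identity output and the $(r,\omega)$-integral, bounded trivially, do not lose more than the power $T/M$.
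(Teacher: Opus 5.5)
Your pipeline (double Poisson modulo $c$, evaluation of the complete sum via the perfect square and Gauss sums, the Weil identity, Mellin separation, Heath-Brown) is the paper's. However, the analytic bookkeeping in Steps 1--2 is wrong, and the errors remove exactly the mechanism that produces the factor $1/M$.

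\emph{First gap: the $\omega=\pi$ piece.} From your own approximation $\psi\approx 2(\cos\omega-1)$, the phase is stationary at $\omega=\pi$ as well as at $\omega=0$ (indeed $\psi(r,\omega)=4\sinh^2((r+i\omega)/2)$). The piece near $\omega=\pi$ has $|\psi|\asymp1$ and carries the oscillation $e[-4\varLambda z_1z_2]$. It is not negligible; it is the paper's ``$+$'' piece.

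\emph{Second gap: the dual lengths.} The Weil identity does not shorten the dual sums. By stationary phase (Lemma \ref{lem: v (u; theta)}), the dual variables $u_i=n_i\sqrt N/\updelta_F c$ are localized at $|u_i|\asymp|\varLambda\psi|$. So after Poisson one has $\RN(n_i)\asymp N$ on the $\omega\approx\pi$ piece, and $\RN(n_i)\asymp\rho^2N$ with $\rho\asymp|T/\varLambda|^2$ on the $\omega\approx0$ piece. The range $\RN(n_i)\Lt CX^2T^{\vepsilon}/N$ that you use is only the small-frequency range, which the paper bounds trivially (cf.\ Remark \ref{rem: 3rd Poisson}).

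\emph{Consequence for the Mellin step.} The phase surviving Weil, combined with $e_F[-n_1n_2/4c]$ from the Gauss sum, is $e[-z\tau(r,\omega)]$ with $z=u_1u_2/4\varLambda$ and $\tau=1+4/\psi$. Its size is $|z\tau|\asymp|\phi\varLambda|\asymp\sqrt{\rho}\,T$, not $O(X)$. Separating it by Lemma \ref{lem: e[z] Mellin} therefore introduces archimedean twists $\vchi_{i\varnu,\vvarkappa}$ with $\sqrt{\varnu^2+\vvarkappa^2}\asymp\sqrt{\rho}\,T$. These twists must go into a large sieve that sees the archimedean conductor: Lemma \ref{lem: weighted Heath-Brown}, with bound $\sqrt{Q}+\sqrt{1+\varnu^2+\vvarkappa^2}$, derived from Lemma \ref{lem:Heath-Brown LS} via the approximate functional equation. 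You also pay the area of the Mellin domain. If the $(r,\omega)$-integral is bounded trivially over the whole annulus, that area is $\asymp\rho T^2$ and the total comes out as $X^{O(1)}T^{4+\vepsilon}$, with no saving in $M$ at all.

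\emph{The missing idea: compressing the Mellin domain.} Write $\tau=|\tau|e^{i\theta}$. The $\omega$-derivative of the phase $4Tr+\varnu\log|\tau|+\vvarkappa\theta$ is $\varnu B+\vvarkappa A$. Because $|r|\leqslant M^{\vepsilon}/M$, one has $|B|\asymp\phi^{-1/2}$ but $|A|\Lt M^{\vepsilon}/(M\phi)$ (when $\sqrt{\phi}>T^{\vepsilon}/M$; otherwise there is nothing to compress). Integrating by parts in $\omega$ then shows the integral is negligible unless $|\varnu|\Lt(T/M+X)T^{\vepsilon}$. Separately, two-dimensional non-stationary phase confines $(\varnu,\vvarkappa)$ to $\sqrt{\varnu^2+\vvarkappa^2}\asymp\sqrt{\phi}\,T$, which forces $\phi\asymp\rho$. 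Together these shrink the Mellin area to $\sqrt{\rho}\,T\,(T/M+X)$. That factor $T/M$, multiplied by the sieve bound $\sqrt{C}+T$, is exactly where $X^{12}T^{4+\vepsilon}/M$ comes from. Your closing ``saves $\sqrt{C}$'' heuristic does not produce it.
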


\begin{remark}\label{rem: c-length}
In   the setting of $\BH^2$ the range of $C$ reads: $C \Lt N^2 M^{\vepsilon} / T M$ {\rm(}see {\rm(6.15)} in {\rm\cite{Khan-Young-Sym2}}{\rm)}, where $N$ is up to $\sqrt{X} T^{1+\vepsilon}$,  so their subsequent applications of Poisson are more effective due to  the factor $M^{\vepsilon}/ M$. The absence of $M$ in the length  of summations in our setting of $\BH^3$ makes it hopeless to breach the final bound $ O_{s} (T^{3+\vepsilon}) $ as in Proposition \ref{prop: main}.  
\end{remark}

\section{Application of Double Poisson} 

By applying the Poisson summation formula in Corollary \ref{cor: Poisson} to both the $n_1$- and $n_2$-sums in \eqref{3eq: S(N)}, we infer that
\begin{align}\label{4eq: S(N, C), Poisson}
	S (N, C) = \frac {N  } {|d_F|} \sum_{ 16 \shskip | \shskip c \, : \, \RN (c) \sim C }  \frac 1 {\RN (c)^3}  \mathop{\sum \sum}_{n_1, \, n_2}  T (n_1, n_2; c)  \widehat{\varww}  \bigg( \frac{n_1 \sqrt{N}}{\updelta_F c}, \frac{n_2 \sqrt{N}}{\updelta_F c} ; \frac {N } { \updelta_F c}  \bigg) , 
\end{align}
with
\begin{align}\label{4eq: Fourier T}
	T (n_1, n_2; c) = \mathop{\sum \sum}_{\valpha_1, \valpha_2 \shskip \in \SO/c \shskip \SO} S (\valpha_1^2, \valpha_2^2; c) e_F \bigg[  \frac{ 2 \valpha_1 \valpha_2 - n_1 \valpha_1  - n_2 \valpha_2  }{c} \bigg],  
\end{align}
\begin{align}\label{4eq: Fourier w} 
	\widehat{\varww} (u_1, u_2 ; \varLambda ) = \viint \viint	\varww (z_1, z_2; \varLambda) e[ u_1 z_1 + u_2 z_2 ] \nd z_1 \nd z_2 . 
\end{align}

\section{Formula for the Exponential Sum}


For the exponential sum $T (n_1, n_2; c)$ defined as in \eqref{4eq: Fourier T},  we start by opening the Kloosterman sum (see \eqref{2eq: defn Kloosterman KS, 2}) and completing the square. It follows that 
\begin{align*}
	T (n_1, n_2; c) = \mathop{\sum \sum}_{\valpha_1, \valpha_2 \in \SO/c \shskip \SO} \sum_{\beta \shskip  \in (\SO /c \shskip\SO)^\times } e_F \bigg[ \frac{    (\valpha_1 +  \valpha_2 \widebar{\beta} )^2 \beta -n_1 \valpha_1  - n_2 \valpha_2 }{c} \bigg]. 
\end{align*}
By the substitution $\valpha_1 = \valpha - \valpha_2 \widebar{\beta}$ and an evaluation of the $\valpha_2$-sum, this is further transformed into
\begin{align}\label{5eq: T, 1}
	T (n_1, n_2; c) = \RN (c) \mathop{\sum}_{\valpha \shskip \in \SO/c \shskip \SO} \mathop{\sum_{\beta \shskip  \in (\SO /c \shskip\SO)^\times }}_{ n_2 \beta \equiv n_1 (\mathrm{mod}\, c)  } e_F \bigg[ \frac{  \valpha^2 \beta  - n_1 \valpha  }{c} \bigg]. 
\end{align}
The congruence in the inner sum implies that $ T( n_1, n_2; c) = 0 $ unless 
\begin{align}\label{5eq: (n,c)}
	 (n_1 , c) = (n_2,c) . 
\end{align}
By the Chinese remainder theorem, $T (n_1, n_2; c)$ satisfies the twisted-multiplicative relation:
\begin{equation}\label{5eq: twisted mult}
	T( n_1, n_2; c_1 c_2) = T(  \widebar{c}_2 n_1 , n_2; c_1) T(\widebar{c}_1  n_1 , n_2; c_2),   
\end{equation}
for $(c_1,c_2) = 1$. 
In particular, 
\begin{equation}\label{5eq: twist by units} 
	T( n_1, n_2; \epsilon c ) = T( \bar{\epsilon} n_1   , n_2; c ) , 
\end{equation}
for any unit $\epsilon$ in $\SO^{\times}$.

Define 
\begin{equation}\label{5eq: p2}
	\begin{split}
		  p_2 = & \left\{ \begin{aligned} 
		& 2, & & \text{ if } d_F = -3,  -11, -19, -43, -67, -163, \\
		& \displaystyle 1+i, \, \sqrt{2} i ,  & & \text{ if } d_F = -4, \, -8,  
	\end{aligned} \right. \\
	&  \ \  p _2 = \frac {1 + \sqrt{7} i} 2, \quad q_2 = \frac { 1 - \sqrt{7} i} 2, \quad \text{ if } d_F = -7, 
	\end{split}
\end{equation}
in the cases when $2 $ is inert, ramified, and  split  (as $h_F = 1$). For notational simplicity, we shall proceed in the non-split case when $d_F  \neq -7$ so that we may write (uniquely) 
\begin{align} \label{5eq: c = p2 co}
	c = p_2^{k} c_{\mathrm{o}}, \qquad 2 \nmid \RN (c_{\mathrm{o}})  ; 
\end{align} for the case $d_F = -7$,  this just needs to be modified into $c = p_2^{k} \shskip q_2^{r} c_{\mathrm{o}}$.  Next, we split $c_{\mathrm{o}}$ into 
\begin{align}\label{5eq: co = cs c*}
	c_{\mathrm{o}} = c_{\square }^2 c^{\star} ,
\end{align}
where $c^{\star}$ is the square-free part of $c_{\mathrm{o}}$.  
Note that this expression is unique if we assume $\arg (c_{\square}) \in [0, 2\pi/w_F)$. 

In Appendix \ref{app: local exp sum},  we shall prove the following analogue of \cite[Lemma 5.3]{Khan-Young-Sym2}, with resort to the  explicit formulae for Gauss sums on quadratic fields in \cite{BS-Gauss-Sums}.  

\begin{lem}\label{lem: exp sum} 
 	Let $n_1, n_2 \in \SO$ and $c \in \SO \smallsetminus \{0\}$. 
Write $c = p_2^{k} c_{\mathrm{o}} $ and $c_{\mathrm{o}} = c_{\square }^2 c^{\star}$ {\rm(}uniquely{\rm)} as in {\rm\eqref{5eq: c = p2 co}} and {\rm\eqref{5eq: co = cs c*}}. Assume $16 | c$.   We have $ T (n_1, n_2; c) = 0 $  unless
	\begin{align}\label{5eq: non-vanishing}
		 (n_1, c) = (n_2, c) \text{{\rm(}$= (d)${\rm)}}, \qquad 4 | (n_1, n_2), \qquad 
	\end{align} 
	in which case, if we 
	set  $		m_1 = n_1 / d$,  $ m_2 = n_2 / d $, $ d = p_2^{l} d_{\mathrm{o}} $, and \begin{align}
	\delta = k \, (\mathrm{mod} \, 2),  \qquad 	  \gamma  = k -  2 \lceil l /2 \rceil  \,  (\mathrm{mod} \, 4)  ,   
	\end{align}
then 
\begin{align}\label{5eq: explicit T(n; c)}
	T (n_1, n_2; c) =  \RN (c)^{3/2}  \RN (d)   \cdot     
	h_{\delta} (c_{\mathrm{o}}, d_{\mathrm{o} } ) g_{\gamma  } (m_1m_2 \widebar{c}_{\mathrm{o} } )   \cdot   \Big( \frac {m_1 m_2} {c^{\star}} \Big)    e_F   \hskip -1pt    \left[ - \frac {n_1 n_2} {4 c} \right]  , 
\end{align}
where  
$ h_{\delta} (c_{\mathrm{o}}, d_{\mathrm{o} } )$  and  $ g_{\gamma  } (n) $ are bounded, with $ h_{\delta} (c_{\mathrm{o}}, d_{\mathrm{o} } ) = 0 $ if $ d_{\mathrm{o} }  \hskip -1.5pt      \nmid \hskip -1.5pt  c_{\square }^2 $\shskip{\rm;} more explicitly, 
\begin{align}\label{5eq: h(c,d)}
		h_{\delta} (c_{\mathrm{o}}, d_{\mathrm{o}}) =  \epsilon (c^{\star})    \Big( \frac {p_2} {  c^{\star} } \hskip -1pt  \Big)^{\delta}    \mathop{\prod_{ \frp | (c_{\square}) , \frp    \nmid    (c^{\star}) } }_{ v_{\frp} (c_{\mathrm{o}}) = v_{\frp} (d_{\mathrm{o}}) } \bigg(1 - \frac 1 {\RN (\frp) } \bigg)  ,
	\end{align}
if $ d_{\mathrm{o} } |  c_{\square }^2 $ {\rm(}see {\rm\eqref{app: epsilon(c)}} for the definition of $\epsilon (c^{\star})${\rm)},  and  $ g_{\gamma  } (n)$ is a function on $\SO / 8 \shskip \shskip \SO$ whose form may alter  in the special cases $ k -  2 \lceil l /2 \rceil = 0, 1, 2, 3 $  {\rm(}see {\rm\eqref{app: defn g (n)}}, {\rm\eqref{app: g(n/p)}}--{\rm\eqref{app: last g(n/pk)}}, and {\rm\eqref{app: defn g0(n)}}{\rm)}. In particular, we have
\begin{align}\label{5eq: T(0,n;c)}
	T (0, n; c) =  \left\{ \begin{aligned}
	&	\RN (c )^{3/2} \varphi (c), & & \text{ if } (c) \text{ is square, and } c \shskip | n,    \\
	& 0 ,  & & \text{ if otherwise,}
	\end{aligned} \right.
\end{align}
where as usual $ \varphi (c) $ is the Euler toitent function. 
\end{lem}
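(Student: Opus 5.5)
The plan is to start from the reduced form \eqref{5eq: T, 1},
\begin{align*}
T(n_1,n_2;c)=\RN(c)\sum_{\beta\in(\SO/c\SO)^\times,\ n_2\beta\equiv n_1 (\mathrm{mod}\, c)} G(\beta,-n_1;c), \qquad G(\beta,n;c)=\sum_{\valpha\in\SO/c\SO} e_F\Big[\frac{\beta\valpha^2+n\valpha}{c}\Big],
\end{align*}
so that everything reduces to evaluating quadratic Gauss sums over $\SO$. By the twisted multiplicativity \eqref{5eq: twisted mult}, it suffices to compute $T$ at the prime-power moduli $p_2^{k}$ and $p^{e}$ with $p$ odd, and then reassemble. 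We first verify the vanishing condition $(n_1,c)=(n_2,c)=(d)$ from \eqref{5eq: (n,c)}. With $n_i=dm_i$, the congruence $m_2\beta\equiv m_1$ modulo $c/d$ determines $\beta$ modulo $c/d$, namely $\beta\equiv m_1\widebar{m}_2$. This leaves $\RN(d)$ lifts modulo $c$ when $(d)\,|\,(c/d)$; otherwise the count has to be corrected by a local factor, and these corrections are what produce the Euler-type product in \eqref{5eq: h(c,d)}.

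At an odd prime $\frp=(p)$ with $v_p(c)=e$ and $v_p(d)=j$, we complete the square: $G(\beta,n;p^e)=e_F[-\overline{4\beta}n^2/p^e]\,G(\beta,0;p^e)$ whenever $p^{\min}$ divides $n$ appropriately, and the sum vanishes otherwise. For the standard Gauss sum we use $G(\beta,0;p^e)=\RN(p)^{e/2}$ times $\big(\frac{\beta}{p}\big)^{e}$ times a sign $\epsilon$, as given by \cite{BS-Gauss-Sums}. Substituting $\beta\equiv m_1\widebar m_2$ turns the phase $-\overline{4\beta}n_1^2/c$ into $-n_1n_2/(4c)$, since $\overline{\beta}n_1^2\equiv n_1n_2$. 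The Legendre factor becomes $\big(\frac{m_1m_2}{p}\big)^{e}$, which for odd $e$ is the $c^\star$-part of the Jacobi symbol in \eqref{5eq: explicit T(n; c)}. Summing over the $\RN(p)^j$ lifts of $\beta$, some of them non-coprime to $p$, gives the factor $\RN(d)$ together with $(1-1/\RN(\frp))$ exactly when $v_\frp(c_{\mathrm o})=v_\frp(d_{\mathrm o})$; when $v_\frp(d)>v_\frp(c)/2$ the constraints are inconsistent, which yields the vanishing unless $d_{\mathrm o}\,|\,c_\square^2$. Assembling these local factors with the signs $\epsilon$, and with the reciprocity factor $\big(\frac{p_2}{c^\star}\big)^{\delta}$ that comes from the twist by $\widebar{p_2^k}$ in \eqref{5eq: twisted mult}, gives $h_\delta$.

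At the prime $p_2$ we compute $G(\beta,n;p_2^k)$ directly, since $2$ divides the linear coefficient when the square is completed. Because $16\,|\,c$, the sum vanishes unless $n$ is divisible by a suitable power of $p_2$; this is the origin of the condition $4\,|\,(n_1,n_2)$. When it does not vanish, $G$ equals $\RN(p_2)^{k/2}$ times a root of unity depending only on $\beta$ modulo $8$ and on $k$ modulo $4$. Writing $\beta\equiv m_1\widebar m_2$ and twisting by $\widebar c_{\mathrm o}$, this becomes a function $g_\gamma(m_1m_2\widebar c_{\mathrm o})$ on $\SO/8\SO$, where $\gamma=k-2\lceil l/2\rceil$ modulo $4$. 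This computation has to be done separately for the inert, ramified and split cases of $2$, and we expect the 2-adic bookkeeping, namely the Gauss sums over $p_2$ for $d_F=-4,-8$ together with the small cases $\gamma=0,1,2,3$, to be the main obstacle.

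Finally, \eqref{5eq: T(0,n;c)} follows directly from \eqref{5eq: T, 1}. When $n_1=0$ the congruence forces $c\,|\,n_2$, and all $\varphi(c)$ values of $\beta$ are admissible. The remaining sum $\sum_\beta G(\beta,0;c)$ is $\RN(c)^{1/2}\varphi(c)$ when $(c)$ is a square and vanishes otherwise, because the Legendre characters are then nontrivial. Multiplying by the prefactor $\RN(c)$ gives $\RN(c)^{3/2}\varphi(c)$.
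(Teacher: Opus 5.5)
\paragraph{Odd primes: the order of summation works, but the vanishing mechanism is misstated.}
Summing the Gauss sum in $\valpha$ for each fixed $\beta$, and then over the admissible $\beta$, is a legitimate alternative to the paper's order, and at odd primes it does give the formula. Your explanation of the local vanishing there is wrong, though.

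At an odd $\frp=(p)$ with $l=v_\frp(d)<k=v_\frp(c)$:
\begin{itemize}
\item all $\RN(p)^l$ lifts of $\beta\equiv m_1\overline{m}_2 \ (\mathrm{mod}\ p^{k-l})$ are units;
\item the completed-square phase $e_F[-\overline{4\beta}\,n_1^2/p^k]$ and $(\beta/p)^k$ are constant on the lifts;
\item so the local factor is $\RN(p)^{3k/2+l}$ times a unimodular number.
\end{itemize}
This is nonzero also when $l>k/2$; for example $k=3$, $l=2$, where indeed $d_{\mathrm o}\,|\,c_\square^2$ locally.

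Locally, $d_{\mathrm o}\nmid c_\square^2$ means exactly that $l=k$ is odd. The vanishing then comes from $\sum_{\beta\in(\SO/p^k\SO)^\times}(\beta/p)=0$ with $\beta$ unconstrained. Likewise, the factor $1-1/\RN(\frp)$ arises only for $l=k$ even.

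Consequently:
\begin{itemize}
\item the right criterion for ``$\RN(d)$ unit lifts'' is $v_\frp(d)<v_\frp(c)$ for all $\frp$, not $(d)\,|\,(c/d)$;
\item at odd $p$, completing the square needs no divisibility of the linear coefficient;
\item $\epsilon$ is not multiplicative, so assemble the odd part globally via \eqref{app: G(n/c), c odd}, using $\epsilon(c_{\mathrm o})=\epsilon(c^\star)$, rather than prime by prime.
\end{itemize}

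\paragraph{The genuine gap: the sum over lifts of $\beta$ at $p_2$.}
You treat the sum over the lifts of $\beta$ as trivial at $p_2$, and it is not.

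First, for a unit $\beta$ and $k\geqslant v_{p_2}(16)$, $G(\beta,n;p_2^k)$ vanishes exactly when $2\nmid n$ (shift $\valpha$ by $p_2^{k-v_{p_2}(2)}u$). So for $v_{p_2}(2)\leqslant l<v_{p_2}(4)$ every individual Gauss sum is nonzero. The vanishing $T=0$ must instead come from cancellation in the sum over $\beta=\beta_0+p_2^{k-l}t$, where the phase $e_F[-\overline{\beta}\,n_1^2/4p_2^k]$ now varies with $t$. The condition $4\,|\,(n_1,n_2)$ does not come from $G$ itself.

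Second, when $k-l<v_{p_2}(8)$ the lifts are not congruent modulo $8$. Then ``$\beta\equiv m_1\overline{m}_2$'' does not determine $G(\beta,0;p_2^k)$, and the average of $g(\beta/p_2^k)$ over the coset can vanish even though each term has absolute value $2$. For example, take $p_2=2$, $k=5$, $l=3$:
\begin{itemize}
\item the lifts are $\beta_0+4t$;
\item the factor $(2\,|\,\RN(\beta))$ takes both signs equally often over them;
\item so $T=0$, matching $g(n/p)=0$ in \eqref{app: g(n/p)}, whereas your description predicts a nonzero multiple.
\end{itemize}
Your route naturally depends on $k$ modulo $4$. It gives no mechanism for the dependence on $\gamma=k-2\lceil l/2\rceil$, nor for the special cases $0,1,2,3$.

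\paragraph{How the paper avoids this.}
The paper sums over $\beta$ first.
\begin{itemize}
\item After $\valpha\to m_2\valpha$ and $\beta\to\overline{m}_2^{\,2}\beta$, the lifts contribute $\RN(p)^l$ times the indicator of $p^l\,|\,\valpha^2$.
\item Then $\valpha\to p^j\valpha$ with $j=\lceil l/2\rceil$ leaves a single Gauss sum modulo $p^{k-2j}$ with linear coefficient $p^{l-j}$. This is where $\gamma$ comes from.
\item That sum vanishes for $l<v_p(4)$, by shifting $\valpha$ by $p^{k-2j-1}u$ (inert) or $p^{k-2j-2}u$ (ramified).
\item Otherwise, completing the square gives \eqref{app: T(n;p) even}.
\end{itemize}

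\paragraph{The same issue in $T(0,n;c)$.}
Your proof of \eqref{5eq: T(0,n;c)} has the same problem. At $p_2$ the normalized Gauss sum $g(\beta/p_2^k)$ is not a Legendre character, so the ``nontrivial character'' argument does not apply. Argue instead through the Ramanujan sum $S(\valpha^2,0;p^k)$, as in \eqref{app: case l=k}.
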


Moreover, in the case $ l < k $ so that $m_1$ and $m_2$ are odd,  we need to separate $m_1$ and $m_2$ in $g_{\gamma} (m_1 m_2 \widebar{c}_{\mathrm{o}})$ via the Mellin decomposition: 
\begin{align}\label{5eq: g (n)}
	g_{\gamma} (n) = \sum_{\omega \shskip (\mathrm{mod}\, 8\shskip \SO)}  \widehat{g}_{\gamma} (\omega) \omega (n), 
\end{align}
where the sum is over the   characters of $ {(\SO / 8 \shskip \SO){}^{\times}}$. The case $l =k$ is simpler as $g_0 (n)$ is constant   $1- 1/ \RN (p_2)$ or  $0$ according as $\delta = 0 $ or $1$ (see \eqref{app: defn g0(n)}). 

\begin{remark} 
Although it does not affect much our analysis, slight simplification can be made  in the case $p_2 = 2$ as now $ g _{\gamma} (n) $ only depends on $ \delta = k \, (\mathrm{mod} \, 2)$ {\rm(}see {\rm\eqref{app: g(n), inert}}{\rm)} and  it changes (to constant $0$, $3/4$, or $1$) only if  $ k -  2 \lceil l /2 \rceil = 0, 1  $.  Also $ g _{\gamma} (n) $ is multiplicative in every case of $k$ and $l$, so {\rm\eqref{5eq: g (n)}} is not required.  
\end{remark}

Finally, as the exponential factor in \eqref{5eq: explicit T(n; c)} will be extracted and join the Fourier integral (see \eqref{6eq: defn wn}), let us define 
\begin{align}\label{5eq: Tn(n; c)}
T^{\natural} (n_1, n_2; c) =	T (n_1, n_2; c)     e_F   \hskip -1pt    \left[   \frac {n_1 n_2} {4 c} \right]  . 
\end{align}


\section{Analysis for the Fourier Integral} 

By \eqref{3eq: defn of I(z)}, \eqref{3eq: defn of psi}, \eqref{3eq: defn of w}, and \eqref{4eq: Fourier w}, we rewrite the Fourier integral $\widehat{\varww} (u_1, u_2 ; \varLambda )$ as follows:
\begin{align}\label{6eq: integral w}
	 \widehat{\varww} (u_1, u_2 ; \varLambda ) = M T^2 \viint_{ \widehat{\EuScript{A}} } g (Mr) e (2Tr/\pi)  \varvv ( u_1, u_2; \varLambda \psi (r, \omega)   ) \nd \widehat{\mu} (r, \omega), 
\end{align} 
with
\begin{align}
\varvv ( u_1, u_2; \zeta ) =	\viint \viint	\varww (z_1) \overline{\varww} (z_2) e[ \zeta z_1 z_2   +  u_1 z_1 + u_2 z_2  ] \nd z_1 \nd z_2 . 
\end{align}
First, note the trivial bound 
\begin{align}\label{6eq: trivial bound w}
	 \widehat{\varww} (u_1, u_2 ; \varLambda ) \Lt T^2. 
\end{align} For the zero frequency, we shall need   stronger bounds for $\widehat{\varww} (0, 0 ; \varLambda )$. 
\begin{lem}\label{lem: w (0, 0)}
Let $|\varLambda | \Gt T$. 	Then we have bounds
	\begin{align}\label{6eq: bound w(0, 0)}
		 \widehat{\varww} (0, 0 ; \varLambda ) \Lt \frac { X M T^{2+\vepsilon} } { {|\varLambda| }},
	\end{align}
and $ \widehat{\varww} (0, 0 ; \varLambda ) = O (T^{-A})$ in the case $ |\varLambda | \leqslant T^{2-\vepsilon} / X $.  
\end{lem}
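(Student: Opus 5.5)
The plan is to bound $\widehat{\varww}(0,0;\varLambda)$ directly from the representation \eqref{6eq: integral w}:
\begin{align*}
\widehat{\varww}(0,0;\varLambda)=MT^2\viint_{\widehat{\EuScript{A}}} g(Mr)\, e(2Tr/\pi)\, \varvv\big(0,0;\varLambda\psi(r,\omega)\big)\nd\widehat{\mu}(r,\omega).
\end{align*}
The first step is to understand $\varvv(0,0;\zeta)$. I integrate first in $z_2$:
\begin{align*}
\varvv(0,0;\zeta)=\viint \varww(z_1)\,\widehat{\overline{\varww}}(\zeta z_1)\,\nd z_1 .
\end{align*}
Since $\varww$ is $X$-inert and supported on $A[1,2]$ with $|z_1|\asymp 1$, repeated partial integration gives $\varvv(0,0;\zeta)=O(1)$ always, and $\varvv(0,0;\zeta)=O(T^{-A})$ once $|\zeta|>XT^{\vepsilon}$. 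Also $\partial_\zeta^i\partial_{\bar\zeta}^j\varvv(0,0;\zeta)=O_{i,j}(1)$, because differentiating only brings down bounded powers of $z_1z_2$. Consequently, up to a negligible error, I may insert a smooth cutoff restricting the $(r,\omega)$-integral to the region
\begin{align*}
|\psi(r,\omega)|\leqslant\delta:=XT^{\vepsilon}/|\varLambda|.
\end{align*}
Since $g$ is Schwartz, I may also assume $|r|\leqslant M^{\vepsilon}/M$. Note that $\delta\leqslant T^{-1+\vepsilon}$ because $|\varLambda|\Gt T$ and $X<T$.

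For the bound \eqref{6eq: bound w(0, 0)}, I estimate trivially by the measure of this region. Expanding for small $r$,
\begin{align*}
\psi(r,\omega)=2(\cosh r\cos\omega-1)+2i\sinh r\sin\omega .
\end{align*}
The real part is $\approx -\omega^2+r^2$ up to lower order, and $\omega$ near $\pi$ is excluded since there $|\psi|\approx 4$. So the region forces $|\omega|\Lt\sqrt{\delta}$ (using $r\Lt M^{\vepsilon}/M$ and $r^2$ small), together with $|r\sin\omega|\Lt\delta$, i.e. $|r|\Lt\min(M^{\vepsilon}/M,\delta/|\omega|)$. Then
\begin{align*}
\int_{|\omega|\Lt\sqrt\delta}\min\big(M^{\vepsilon}/M,\ \delta/|\omega|\big)\nd\omega\Lt \delta\, T^{\vepsilon}.
\end{align*}
The range $|\omega|\leqslant \delta M$ contributes $\delta$, and the rest contributes $\delta\log$. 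Multiplying by $MT^2$ and the bound $O(1)$ for $\varvv$ gives $XMT^{2+\vepsilon}/|\varLambda|$. Some care is needed if $M^{-2}>\delta$, but then the $r^2$ term only shrinks the region further, so the upper bound persists.

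For the negligibility when $|\varLambda|\leqslant T^{2-\vepsilon}/X$, I apply Lemma \ref{lem: staionary phase, dim 1, 2} in $r$ for fixed $\omega$. The phase is $f(r)=2Tr/\pi$, so $R\asymp T$ and all higher derivatives vanish. The weight is $g(Mr)\,\varvv(0,0;\varLambda\psi(r,\omega))$ times the cutoff. Each $r$-derivative of $g(Mr)$ costs $M\leqslant T^{1-\vepsilon}$. For the other factor, $\partial_r(\varLambda\psi)\Lt|\varLambda|(|r|+|\omega|)$, and on the support $|\omega|\Lt\sqrt\delta$, so the first derivative is $\Lt\sqrt{|\varLambda|X}\,T^{\vepsilon}+|\varLambda|/M$. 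The higher derivatives $\partial_r^j(\varLambda\psi)\Lt|\varLambda|$ cost effectively $|\varLambda|^{1/2}$ per derivative. The term $|\varLambda|/M$ actually enters via $|r|\Lt\sqrt\delta$ as well (the real-part constraint gives $r^2\Lt\delta$), so it is again $\sqrt{|\varLambda|X}\,T^{\vepsilon}$. Thus the weight is $1/P$-inert with
\begin{align*}
1/P\Lt M+T^{\vepsilon}\sqrt{|\varLambda|X}+\sqrt{|\varLambda|}\leqslant T^{1-\vepsilon/3},
\end{align*}
and the lemma yields $O(T^{-A})$.

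The main obstacle I expect is making the support localization (the cutoff to $|\psi|\leqslant\delta$, and the resulting constraints on $r$ and $\omega$) rigorous and smooth enough that the cutoff function itself has derivative bounds compatible with $P$. I would handle this by taking the cutoff as a smooth function of $\varLambda\psi/(XT^{\vepsilon})$, whose $r$-derivatives obey the same chain-rule bounds as $\varvv$.
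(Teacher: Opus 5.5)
Your proposal is correct and follows essentially the paper's argument: localize to $|\psi(r,\omega)|\leqslant XT^{\vepsilon}/|\varLambda|$ using the decay of $\varvv(0,0;\zeta)$ for $|\zeta|>XT^{\vepsilon}$, bound trivially by $MT^2$ times the area of that region, and then treat the $r$-integral as a Fourier integral whose weight is $\max\{M,\sqrt{X|\varLambda|T^{\vepsilon}}\}$-inert. The only difference is that the paper uses the exact identity $|\psi(r,\omega)|=4(\sinh^2(r/2)+\sin^2(\omega/2))$, so the region is visibly a disc of radius $\asymp\sqrt{XT^{\vepsilon}/|\varLambda|}$; this avoids your logarithmic loss and the case distinction between $M^{-2}$ and $\delta$ (where, strictly, it is the imaginary part rather than the $r^2$ term that confines $\omega$).
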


Moreover, we modify $\widehat{\varww} (u_1, u_2 ; \varLambda )$ by the exponential factor arising from the formula of $ T (n_1, n_2; c) $ as in \eqref{5eq: explicit T(n; c)} (see also \eqref{5eq: Tn(n; c)}):
\begin{align}
	 \label{6eq: defn wn} 
	  \widehat{\varww}{}^{\natural}  (u_1, u_2 ; \varLambda ) = e \left[ - \frac {u_1 u_2} {4 \varLambda}  \right] \widehat{\varww} (u_1, u_2 ; \varLambda ) .  
\end{align} 
In \S  \ref{sec: Fourier} and \S \ref{sec: Mellin},   we shall analyze  $   \varvv ( u_1, u_2; \zeta ) $ and $ \widehat{\varww}{}^{\natural} (u_1, u_2 ; \varLambda )$ respectively by twice the Weil identity (Lemma \ref{lem: Weil}) and by the Mellin technique (Lemmas \ref{lem: w(z) Mellin} and \ref{lem: e[z] Mellin}).

\begin{lem}\label{lem: v (u; theta)}
	 Suppose that  $|u_1 | + |u_2| > X  T^{\vepsilon}$. Then
	 \begin{align}\label{6eq: integral v}
	 	 \varvv ( u_1, u_2;  \zeta  ) = \frac{1 }{ | u_1 u_2 |  } e    \hskip -1pt   \left[ - \frac{u_1 u_2}{\zeta}  \right] \breve{\varvv} (   \zeta)  + O (T^{-A}),
	 \end{align}
 where 	  $\breve{\varvv} ( \zeta) = \breve{\varvv} (   \zeta;   u_1, u_2)$ is $X^4$-bounded and $X$-inert  for $\zeta$ on $A[XT^{\vepsilon}, \infty)$ and also for  $u_1 , u_2 $ both on  $ A [ \sqrt[4]{2} |\zeta| / 2, 2\sqrt[4]{2} |\zeta| ]$,  and  negligibly small if $\zeta$ or $u_1$, $u_2$ is not in the indicated range.   
\end{lem}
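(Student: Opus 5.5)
The plan is to complete the square in the phase, rotate to diagonal coordinates, and apply the Weil identity (Lemma \ref{lem: Weil}) once in each coordinate. Beforehand, integration by parts (Corollary \ref{cor: staionary phase, complex}) will dispose of all configurations outside the indicated ranges.

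\emph{Step 1: localisation.} Consider the $z_1$-integral in $\varvv(u_1,u_2;\zeta)$ for fixed $z_2$. Its phase is $\mathrm{Re}$-linear in $z_1$ with derivative $u_1+\zeta z_2$. The amplitude $\varww(z_1)$ is $X$-inert on $A[1,\sqrt 2]$ (the support of $\varww$ in \eqref{2eq: AFE}), where $\varww$ also lives in Proposition \ref{prop: initial reduction}. So Corollary \ref{cor: staionary phase, complex} with $P\asymp 1/X$ and $Z/Q^2=0$ shows the integral is negligible unless $|u_1+\zeta z_2|\leqslant XT^{\vepsilon}$ for some $z_2$ in the support. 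Symmetrically, it is negligible unless $|u_2+\zeta z_1|\leqslant XT^{\vepsilon}$.

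\emph{Consequences of Step 1.} If $|\zeta|\leqslant XT^{\vepsilon}$ (with a slightly smaller $\vepsilon$), these two conditions contradict $|u_1|+|u_2|>XT^{\vepsilon}$, so $\varvv$ is negligible. If $|\zeta|\geqslant XT^{\vepsilon}$, then $|z_2|\in[1,\sqrt[4]{2}]$ forces $|u_1|\in A[\sqrt[4]{2}|\zeta|/2,\,2\sqrt[4]{2}|\zeta|]$ up to the negligible slack $XT^{\vepsilon}\ll|\zeta|$, and likewise for $u_2$. Outside these ranges everything is $O(T^{-A})$, as claimed.

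\emph{Step 2: completing the square.} In the remaining range, write
\[\zeta z_1z_2+u_1z_1+u_2z_2=\zeta(z_1+u_2/\zeta)(z_2+u_1/\zeta)-u_1u_2/\zeta.\]
Substitute $z_1'=z_1+u_2/\zeta$ and $z_2'=z_2+u_1/\zeta$, so that
\[\varvv=e[-u_1u_2/\zeta]\viint\viint W(z_1',z_2')\,e[\zeta z_1'z_2']\,\nd z_1'\nd z_2',\]
where $W(z_1',z_2')=\varww(z_1'-u_2/\zeta)\,\overline{\varww}(z_2'-u_1/\zeta)$. Since $|u_j/\zeta|\asymp1$, the function $W$ is supported in a disc of radius $O(1)$ and is $X$-inert in $z_1',z_2'$.

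Regarded as a function of $u_1,u_2$, the amplitude $W$ is also $X$-inert. Indeed, each $\partial_{u_j}$ costs a factor $X/|\zeta|$, and multiplying by $|u_j|\asymp|\zeta|$ gives $X$.

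Next put $x=z_1'+z_2'$ and $y=z_1'-z_2'$. Then $\zeta z_1'z_2'=\zeta x^2/4-\zeta y^2/4$, the Jacobian is constant, and the new amplitude $\widetilde W(x,y)$ is $1$-bounded, $X$-inert and supported in $D(a)$ with $a=O(1)$.

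\emph{Step 3: two Weil transforms.} Apply Lemma \ref{lem: Weil} (via \eqref{2eq: defn w}) in $x$ with parameter $\zeta/4$. This expresses the $x$-integral as $|\zeta/4|^{-1}$ times an $X^2$-bounded function that is $(1+X^2/|\zeta|)$-inert in $\zeta$. By the remark after Lemma \ref{lem: Weil}, it remains $X$-inert in $y,u_1,u_2$. Now apply Lemma \ref{lem: Weil} again in $y$ with parameter $-\zeta/4$. The result is
\[\varvv=|\zeta|^{-2}\,e[-u_1u_2/\zeta]\cdot\Phi(\zeta;u_1,u_2),\]
with $\Phi$ being $X^4$-bounded. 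Since $|\zeta|\geqslant XT^{\vepsilon}$, we have $1+X^2/|\zeta|\leqslant 2X$, so $\Phi$ is $X$-inert in $\zeta$ on $A[XT^{\vepsilon},\infty)$, and it is $X$-inert in $u_1,u_2$.

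\emph{Step 4: normalisation.} Finally, set $\breve\varvv=(|u_1u_2|/|\zeta|^2)\,\Phi$. The factor $|u_1u_2|/|\zeta|^2$ is a bounded inert function on the range where $|u_j|\asymp|\zeta|$, so $\breve\varvv$ is again $X^4$-bounded and $X$-inert, which gives \eqref{6eq: integral v}.

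The main obstacle is the bookkeeping of inertness through the parameter-dependent shifts $u_j/\zeta$ and through two successive Weil transforms in the joint variables $(\zeta,u_1,u_2)$. In particular, one must check that differentiating $e[-z^2/4u]$ in the proof of Lemma \ref{lem: Weil} together with the $u$-dependence of the amplitude stays within the $X$-inert scale. I would verify this first, directly from the Fourier-side representation in the proof of Lemma \ref{lem: Weil}.
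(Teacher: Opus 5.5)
Your argument is correct, and it takes a different and shorter route than the paper's.

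\textbf{The paper's route.} It first substitutes $z_2\to z/z_1$ and uses the first-derivative test to force $u_1/u_2\asymp 1$. It then rescales $z_1$, localizes at the stationary points $z_1=\pm1$ with a partition of unity, and changes variables $v=\sqrt{z_1}-1/\sqrt{z_1}$ so that the $z_1$-phase becomes exactly quadratic. After one application of Lemma \ref{lem: Weil}, it substitutes $\pm\sqrt{z}\to z$ in the outer integral and localizes $u_j/\zeta$ by a second first-derivative test. Finally it completes the square $w=z+\sqrt{u_1u_2}/\zeta$ and applies Lemma \ref{lem: Weil} again.

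\textbf{Your route.} You use the fact that $\zeta z_1z_2+u_1z_1+u_2z_2$ is already an exact nondegenerate quadratic polynomial. The shifts $z_1'=z_1+u_2/\zeta$, $z_2'=z_2+u_1/\zeta$ produce the factor $e[-u_1u_2/\zeta]$ exactly. The rotation $x=z_1'+z_2'$, $y=z_1'-z_2'$ turns $\zeta z_1'z_2'$ into $\zeta(x^2-y^2)/4$. Two one-variable Weil transforms then finish the job, with no partition of unity and no square-root branches. The localization, which is just the decay of $\widehat{\varww}$, is needed only to keep $|u_j/\zeta|\asymp 1$, so that the shifted support has radius $O(1)$. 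As a by-product, it gives a sharper range for $u_j/\zeta$ than the statement requires.

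\textbf{What each buys.} The paper's scheme is the general iterated stationary-phase argument, as in \cite{Qi-GL(3)}. It would survive a non-quadratic phase and displays the intermediate Bessel-type integral $\varvv_{\natural}$. For this exactly quadratic phase, though, your diagonalization is the more economical argument.

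\textbf{The point you flag.} When $\zeta$ enters both as a Weil parameter and through the shifts and the first transform, inertness in $\zeta$ follows from the chain rule. Each application of $|\zeta|\partial_\zeta$ (or $\partial_{\widebar\zeta}$) costs at most $O(X)$, because $1+X^2/|\zeta|\ll X$ for $|\zeta|\geqslant XT^{\vepsilon}$ and $|u_j/\zeta|\ll 1$. The paper handles this just as implicitly.

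\textbf{A minor slip.} The support of $\varww$ is $A[1,\sqrt{2}]$ (or $A[1,2]$ in Proposition \ref{prop: initial reduction}), not $|z_2|\in[1,\sqrt[4]{2}]$. This is harmless, since the resulting range $[1-o(1),\,2+o(1)]$ for $|u_j/\zeta|$ still lies inside $[\sqrt[4]{2}/2,\,2\sqrt[4]{2}]$.
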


\begin{defn}\label{defn: rectangle}
	For $V, U \Gt 1$, define rectangles
	\begin{align}
	 	\EuScript{A}  (V, U) & = \big\{ (\varnu, \vvarkappa)   : |\varnu | \leqslant V   , |\vvarkappa| \leqslant U  \big\}, \\
	 \label{6eq: defn A(V, U)}	\EuScript{A}_{\vepsilon} (V, U) & = \big\{ (\varnu, \vvarkappa)   : |\varnu | \leqslant V T^{\vepsilon}, |\vvarkappa| \leqslant U T^{\vepsilon}  \big\}. 
	\end{align} 
\end{defn}

\begin{lem}\label{lem: w (u; Lambda)}
	 Let   $|u_1 | + |u_2| > X  T^{\vepsilon}$. Let $  \rho  \Lt 1$. 
	 Then for $   |T/\varLambda|^2 \sim  \rho   $ we may write 
	 \begin{align}\label{6eq: w = w rho} 
	 	\widehat{\varww}{}^{\natural}  (u_1, u_2 ; \varLambda ) = \widehat{\varww}{}^{+}_{0 }  (u_1, u_2 ; \varLambda )   + \widehat{\varww}{}^{+}_{\rho}  (u_1, u_2 ; \varLambda ) + \widehat{\varww}{}^{-}_{\rho}  (u_1, u_2 ; \varLambda ) +  O (T^{-A}), 
	 \end{align}  
 if $\sqrt{\rho} \Gt {X T^{\vepsilon}}/ {T }$, or simply 
 \begin{align}\label{6eq: w = w rho, 1} 
 	\widehat{\varww}{}^{\natural}  (u_1, u_2 ; \varLambda ) = \widehat{\varww}{}^{+}_{0 }  (u_1, u_2 ; \varLambda )  +  O (T^{-A}), 
 \end{align}  
 if  $\sqrt{\rho} \Lt {X T^{\vepsilon}}/ {T }$, 
so that $ \widehat{\varww}{}^{\pm}_{\rho }  (u_1, u_2 ; \varLambda ) $ {\rm(}$\rho =0$ is admissible in the $+$ case{\rm)} 
   is supported on  
  \begin{align}\label{6eq: support in u}
 |u_1| , |u_2| \asymp \left\{ \begin{aligned} & |\varLambda | , & & \text{ if } +, \\
 	&   |\rho \varLambda |  , & & \text{ if } -,
 \end{aligned} \right.
  \end{align} 
 of the form 
	 \begin{align}\label{6eq: w = Phi}
	 	\widehat{\varww}{}^{\pm}_{\rho}   (u_1, u_2 ; \varLambda ) = \frac{M T^2} {|u_1 u_2 |  }  \Upsilon_{\rho}^{\pm}  \Big( \frac {u_1 u_2} {4 \varLambda} \Big) , 
	 \end{align} 
 with 
 \begin{align}\label{6eq: Phi o}
 	\Upsilon_{0}^{+} (z) =       \frac {X^5 \sqrt{\rho}\, T^{\vepsilon}}  {T}  \viint_{ \EuScript{A}_{\vepsilon} (X) } \vlambda_{0}^{+} (\varnu, \vvarkappa) \vchi_{i \varnu,  \vvarkappa          } (z) \nd \mu (\varnu, \vvarkappa) , 
 \end{align}
 \begin{align}\label{6eq: Phi}
 	\Upsilon_{\rho}^{\pm} (z) =  \frac{ X^4 \log T}{ M   T } \viint_{ \EuScript{A}_{\vepsilon} (T/M+X ,  \sqrt{\rho} T  ) } \vlambda_{\rho}^{\pm} (\varnu, \vvarkappa) \vchi_{i \varnu,  \vvarkappa          } (z) \nd \mu (\varnu, \vvarkappa) , 
 \end{align}
for   $ \vlambda_{\rho}^{\pm} (\varnu, \vvarkappa) \Lt 1 $ {\rm(}of suppressed $X$-inert variables $u_1, u_2${\rm)}. 

\begin{remark}
	Actually, with some efforts, one may remove $\widehat{\varww}{}^{+}_{0 }  (u_1, u_2 ; \varLambda ) $ from   {\eqref{6eq: w = w rho}} and,  for $\rho$ small,   improve the fractions in {\rm\eqref{6eq: Phi o}} and {\rm\eqref{6eq: Phi}}. 
\delete{	\begin{align}\label{6eq: w = w rho, 2} 
		\widehat{\varww}{}^{\natural}  (u_1, u_2 ; \varLambda ) = \left\{  \begin{aligned} 
			& \widehat{\varww}{}^{+}_{0 }  (u_1, u_2 ; \varLambda )   +  O (T^{-A}) , & &  \text{ if  } \sqrt{\rho} \Lt  {X T^{\vepsilon}}/ {T },   \\
			&  \widehat{\varww}{}^{+}_{\rho}  (u_1, u_2 ; \varLambda ) + \widehat{\varww}{}^{-}_{\rho}  (u_1, u_2 ; \varLambda ) +  O (T^{-A}), & & \text{ if  } \sqrt{\rho} \Gt {X T^{\vepsilon}}/ {T },   
		\end{aligned} \right.
	\end{align}  }
\end{remark}


\end{lem}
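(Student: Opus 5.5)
We plan to insert Lemma \ref{lem: v (u; theta)} into the integral representation \eqref{6eq: integral w}, with $\zeta = \varLambda \psi(r,\omega)$, and then multiply by the extra factor $e[-u_1u_2/4\varLambda]$ from \eqref{6eq: defn wn}. This yields, up to $O(T^{-A})$,
\begin{align*}
\widehat{\varww}{}^{\natural}(u_1,u_2;\varLambda) = \frac{MT^2}{|u_1u_2|}\viint_{\widehat{\EuScript{A}}} g(Mr)\, e(2Tr/\pi)\, e\Big[-\frac{u_1u_2}{\varLambda}\Big(\frac1{\psi(r,\omega)}+\frac14\Big)\Big]\breve{\varvv}(\varLambda\psi(r,\omega))\,\nd\widehat{\mu}(r,\omega).
\end{align*}
The support constraint of Lemma \ref{lem: v (u; theta)} forces $|u_1|,|u_2|\asymp|\varLambda\psi|$ and $|\varLambda\psi|\geqslant XT^{\vepsilon}$. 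Since $|r|\Lt M^{\vepsilon}/M$ effectively, we have $\psi(r,\omega)\approx 2(\cos\omega-1)+2i r\sin\omega$, so $|\psi|$ ranges from $\asymp 1$ (near $\omega=\pi$) down to small values near $\omega=0$. We dyadically partition in $|\psi|$, or equivalently in $\omega$, by a smooth partition of unity.

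The piece with $|\psi|\asymp 1$, i.e. $\omega$ away from $0$, gives the ``$+$'' term supported on $|u_i|\asymp|\varLambda|$. The pieces with $|\psi|\asymp\rho$ (so $\omega^2\asymp\rho$, with a small-$\omega$ region near $\omega=0$) give the ``$-$'' term with $|u_i|\asymp|\rho\varLambda|$. Here the parameter $\rho$ is defined through $|T/\varLambda|^2\sim\rho$; the reason is that the stationary point in $\omega$ balances $T r$ against $|u_1u_2/\varLambda|\cdot|\partial_r(1/\psi)|$. Whenever $|\varLambda\psi|<XT^{\vepsilon}$ the contribution is negligible, and this is what leaves only $\widehat{\varww}{}^{+}_{0}$ when $\sqrt\rho\Lt XT^{\vepsilon}/T$.

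To obtain the form \eqref{6eq: w = Phi}, we separate $z=u_1u_2/4\varLambda$ from $(r,\omega)$ by Mellin. We write $e[-z\phi(r,\omega)]$, with $\phi=4/\psi+1$ (note $\phi$ vanishes to first order near $\omega=\pi$, which is exactly what the factor $1/4$ was designed for), and expand it with Lemma \ref{lem: e[z] Mellin} at scale $R=|z\phi|$. The inert factors $\breve{\varvv}$ are expanded using Lemma \ref{lem: w(z) Mellin}, giving frequencies in $\EuScript{A}_{\vepsilon}(X)$. The character $\vchi_{i\varnu,\vvarkappa}(z\phi)$ factors as $\vchi(z)\vchi(\phi(r,\omega))$. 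The remaining $(r,\omega)$-integral $\int g(Mr)e(2Tr/\pi)\vchi_{i\varnu,\vvarkappa}(\phi(r,\omega))\,\nd r\,\nd\omega$ defines $\vlambda$ after normalization.

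We then bound this integral to locate the support in $(\varnu,\vvarkappa)$. Integrating by parts in $r$ (Lemma \ref{lem: staionary phase, dim 1, 2}) shows it is negligible unless the phase derivative $\varnu\,\partial_r\log|\phi|+\vvarkappa\,\partial_r\arg\phi$ is comparable with $T$. With $\partial_r\arg\phi\asymp 1/\sqrt\rho$ on the $\rho$-piece and $|r|\Lt 1/M$, this gives $|\vvarkappa|\Lt\sqrt\rho\,T^{1+\vepsilon}$ and $|\varnu|\Lt(T/M+X)T^{\vepsilon}$. Bounding the integral trivially by its measure ($\asymp\sqrt\rho/M$ in $(r,\omega)$), together with the Mellin coefficient sizes $O(\log R/R)$ and the $X^4$-bound of $\breve{\varvv}$, produces the prefactors $X^4\log T/(MT)$ in \eqref{6eq: Phi} and $X^5\sqrt\rho\,T^{\vepsilon}/T$ in \eqref{6eq: Phi o}. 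For the $\rho=0$ term, $\phi$ is small and $e[-z\phi]$ is itself $X$-inert, so only Lemma \ref{lem: w(z) Mellin} is needed.

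The main obstacle is the bookkeeping near $\omega=\pi$ and near $\omega=0$. Near $\omega=\pi$ the function $\phi$ degenerates, so the phase is not oscillatory and one must check that this region is the $\widehat{\varww}{}^{+}_0$ piece with the correct normalization. Near $\omega=0$, $\psi$ is small and $\zeta$ leaves the range of Lemma \ref{lem: v (u; theta)}. I would first handle these by explicit Taylor expansion of $\psi$ and $1/\psi$ in $(r,\omega-\pi)$ and $(r,\omega)$ respectively, and verify that the derivative bounds needed for the inert and Mellin lemmas hold uniformly in each dyadic piece.
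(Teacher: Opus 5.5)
Your opening step (Lemma \ref{lem: v (u; theta)} produces the phase $e[-z\tau]$ with $z=u_1u_2/4\varLambda$ and $\tau=1+4/\psi$, your ``$\phi$''), your use of Lemmas \ref{lem: w(z) Mellin} and \ref{lem: e[z] Mellin}, and the trivial bound $\sqrt{\rho}/M$ for the $(r,\omega)$-integral all agree with the paper. The gap is in how you localize the Mellin frequencies.

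Write $\eta$ for the dyadic size of $r^2+4\sin^2(\omega/2)$ or $r^2+4\cos^2(\omega/2)$, and set $f_T=4Tr+\nu\log|\tau|+\kappa\arg\tau$. Since $\tau$ is holomorphic in $r+i\omega$, we have $\partial_r\log|\tau|=\partial_\omega\arg\tau=-2\sinh r\cos\omega/(\sinh^2 r+\sin^2\omega)=O(M^{\vepsilon}/M\eta)$, and this vanishes at $r=0$. Integrating by parts in $r$ therefore gives no upper bound on $\nu$. A large $\nu$ can cancel $4T+\kappa\,\partial_r\arg\tau$ and create a stationary point in $r$. For the same reason, ``comparable with $T$'' does not bound $\kappa$ either.

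The bound $|\nu|\Lt(T/M+X)T^{\vepsilon}$ comes instead from the $\omega$-derivative. We have $\partial_\omega f_T=\nu\,\partial_\omega\log|\tau|+\kappa\,\partial_\omega\arg\tau$ with $|\partial_\omega\log|\tau|\,|=|\partial_r\arg\tau|\asymp1/\sqrt\eta$. Once $|\kappa|\Lt\sqrt\eta\,T^{1+\vepsilon}$ is known, one gets $|\partial_\omega f_T|\Gt|\nu|/\sqrt\eta$ for $|\nu|>T^{1+2\vepsilon}/M$; this $T/M$ comes from the cross term $\kappa\,\partial_\omega\arg\tau$. Lemma \ref{lem: staionary phase, dim 1, 2} in $\omega$ then makes the integral negligible once also $|\nu|>XT^{2\vepsilon}$; this $X$ comes from the $\omega$-derivatives of $\breve{\varvv}(\varLambda\psi)$. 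This compression is essential. With the square of area $\rho T^2$ in place of the rectangle of area $\sqrt\rho\,T(T/M+X)$, the final bound for $S^{\pm}_{\rho}(N,C)$ would be $T^4$ rather than $T^4/M$.

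Second, you assert the restriction to $\eta\asymp\rho$ but do not prove it. Lemma \ref{lem: e[z] Mellin} needs $|z\tau|$ in a fixed dyadic range. On $|\psi|\asymp1$, however, $|\tau|=(\cosh r+\cos\omega)/(\cosh r-\cos\omega)$ runs from $0$ to $\asymp 1$. So the region $|\psi|\asymp 1$ is not a single ``$+$'' piece. It must be cut dyadically in $r^2+4\cos^2(\omega/2)$, mirroring the cut in $r^2+4\sin^2(\omega/2)$ near $\omega=0$; on either kind of piece $|z\tau|\asymp\eta|\varLambda|$.

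On each piece the Mellin variable lies on $\sqrt{\nu^2+\kappa^2}\asymp\eta|\varLambda|$. Separately, the two-dimensional bound $|\nabla f_T|\geqslant|4T-2\sqrt{\nu^2+\kappa^2}/\sqrt{\sinh^2r+\sin^2\omega}|$ together with Lemma \ref{lem: staionary phase, dim 2, 2} forces $\sqrt{\nu^2+\kappa^2}\asymp T\sqrt\eta$. Only the combination of the two gives $\eta\asymp\rho$, and turns $X^4\log T/(M\sqrt\eta\,|\varLambda|)$ into $X^4\log T/(MT)$. Your balancing heuristic is the right intuition, but this is the argument that makes it rigorous.

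The same combination is the real reason only $\widehat{\varww}{}^{+}_{0}$ survives when $\sqrt\rho\Lt XT^{\vepsilon}/T$: no $\eta\geqslant XT^{\vepsilon}/|\varLambda|$ can satisfy $\eta\asymp\rho$. The smallness of $\breve{\varvv}$ for $|\varLambda\psi|<XT^{\vepsilon}$ only removes the pieces near $\omega=0$ with $\eta<XT^{\vepsilon}/|\varLambda|$. It says nothing about the pieces near $\omega=\pi$, where $|\varLambda\psi|\asymp|\varLambda|$.
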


	 
As $ \vlambda_{\rho}^{\pm} (\varnu, \vvarkappa)$ is implicitly dependent on $X$-inert $u_1$, $u_2$. 	 By Lemma \ref{lem: w(z) Mellin},  we may separate $u_1$, $u_2$ as follows{\rm:} 
	 \begin{align}\label{6eq: inert Mellin}
	 \vlambda_{\rho}^{\pm} (\varnu, \vvarkappa; \bfu) = 	\viint \hskip -2pt  \viint_{\boldsymbol{\EuScript{A}}_{\vepsilon} (X) }  \xi_{\rho}^{\pm} (\varnu, \vvarkappa ; \bfnu, \bfkappa)\overline{\vchi_{i\bfnu, \bfkappa }(\bfu) }\nd \mu(\bfnu ,\bfkappa) + O (T^{-A}), 
	 \end{align}
 where   $\xi_{\rho}^{\pm} $ is still bounded, and  in the standard multi-variable notation $ \bfu = (u_1, u_2) $, $ \bfnu = (\varnu_1, \varnu_2) $, $\bfkappa = (\vvarkappa_1, \vvarkappa_2)$, $ \boldsymbol{\EuScript{A}}_{\vepsilon} (X) =  {\EuScript{A}}_{\vepsilon} (X) \times  {\EuScript{A}}_{\vepsilon} (X)$ {\rm(}as in Definition \ref{defn: square}{\rm)}, and $\vchi_{i\bfnu, \bfkappa }(\bfu) = \vchi_{i {\varnu_1},  {\vvarkappa_1} }( {u_1})  \vchi_{i {\varnu_2},  {\vvarkappa_2} }( {u_2})  $. 
 
\begin{remark}\label{rem: nu integral}
	Note that the $(\bfnu, \bfkappa)$-integral is practically harmless, as it yields only an extra factor $O (X^4 T^{\vepsilon})$, while $\varnu_1$, $\varnu_2$ or $\vvarkappa_1, \vvarkappa_2$ may be absorbed into $\varnu$ or $\vvarkappa$ by  $$ {\EuScript{A}}_{\vepsilon} (X) \subset \EuScript{A}_{\vepsilon} \big( T/M+X ,  \sqrt{\rho} T   \big) , $$
for $\sqrt{\rho} \Gt {X T^{\vepsilon}}/ {T }$. 
\end{remark}

\subsection{Proof of Lemma \ref{lem: w (0, 0)}} 
By the change of variable $z_2 \rightarrow z/ z_1 $, we may write
\begin{align*} 
	\varvv (0,0;\zeta) =  \widehat{\varvv} (\zeta) = \viint        \varvv (z) e  [  \zeta z   ]  \nd z , \qquad \varvv (z) = \viint  \varww (z_1)  { \overline{\varww}  ( z/z_1 )  }    \frac { \nd z_1} {{|z_1|^2}} .
\end{align*}
Clearly $ \varvv \in C_c^{\infty}[1/2, 2]$ is still $X$-inert and hence $ \widehat{\varvv} (\zeta)  $ is negligibly small if $ |\zeta | > X T^{\vepsilon} $. Moreover, 
\begin{align*}
	\frac{\partial^{i+j} \widehat{\varvv} (\zeta) }{\partial \zeta^i \partial \widebar{\zeta}^j } \Lt \lp 1 + \frac{|\zeta|}{X} \rp^{-A} .
\end{align*} Therefore
\begin{align}\label{6eq: integral w (0, 0)}
	\widehat{\varww} (0, 0 ; \varLambda ) = M T^2 \viint_{ \widehat{\EuScript{A}} } g (Mr) e (2Tr/\pi) \widehat{\varvv} (\varLambda \psi (r, \omega))    \nd \widehat{\mu} (r, \omega),
\end{align} 
and the integral is bounded by the area of the region given by
\begin{align*} 
		|\psi ( r, \omega )| \leqslant \frac {X T^{\vepsilon}  } {|\varLambda|}  . 
\end{align*}
Recall from    \eqref{3eq: trh} and \eqref{3eq: defn of psi} that 
\begin{align*}
\psi (r, \omega) = 2(	\cosh r \cos \omega - 1 + i \sinh r \sin \omega) , 
\end{align*} 
 hence 
\begin{align*}
	|\psi (r, \omega)| = 2 (\cosh r - \cos \omega) = 4  ( \sinh^2 (r/2) + \sin^2 (\omega/2)  ) ,  
\end{align*} 
and the bound in \eqref{6eq: bound w(0, 0)} follows immediately. Further, on the above region 
$$ \frac{\partial \psi(r,\omega) }{\partial r} \Lt  \bigg({\frac{XT^\vepsilon}{|\varLambda|} }\bigg)^{1/2} , $$ 
so 
$$  \frac{\partial^{j } \widehat{\varvv} (\varLambda \psi (r, \omega) ) }{\partial r^j  } \Lt  ( {X|\varLambda| T^{\vepsilon}})^{j / 2} ;  $$ 
 this may be seen by a variant of the  Fa\`a di Bruno formula \cite{Faa-di-Bruno}. Since the $r$-integral in \eqref{6eq: integral w (0, 0)} is   Fourier, we infer that  $	\widehat{\varww} (0, 0 ; \varLambda )$ is negligibly small as long as $ \sqrt{X |\varLambda |} \leqslant T^{1-\vepsilon} $ (and $M \leqslant T^{1-\vepsilon}$).

\subsection{Proof of Lemma \ref{lem: v (u; theta)}}\label{sec: Fourier}

By the change of variable $z_2 \rightarrow z/ z_1 $, we have
\begin{align}\label{6q: 2nd integral}
	\varvv (u_1,u_2;\zeta) = \viint \varvv_{\natural} (z;u_1, u_2)  e  [  \zeta z   ]  \nd z,
\end{align}
with
\begin{align}\label{6q: 1st integral}
	\varvv_{\natural} (z;u_1, u_2) = \viint   \frac{\varww (z_1) \overline{\varww}  ( z/z_1 )  }{|z_1|^2}  e \bigg[  u_1 z_1 + \frac{u_2 z}{z_1} \bigg]  \nd z_1 . 
\end{align}

As  $|u_1| + | u_2| > X  T^{\vepsilon}$, by applying Corollary \ref{cor: staionary phase, complex} with $P = 1/X$, $Q = 1$, $R =   |u_1| + |u_2|  $, and $Z = |u_2|$, we infer that the integral $ 	\varvv_{\natural} (z;u_1, u_2) $ as in \eqref{6q: 1st integral}  is negligibly small unless $ u_1/u_2 \in A [1/2, 2] $, say. Next, for such $u_1$ and $u_2$, on the change  $z_1\rightarrow z_1\sqrt{u_2 z/u_1}$, we have 
$$ \varvv_{\natural} (z;u_1, u_2) = \viint \varww_{\natural} (z, z_1)  e \bigg[  \sqrt{u_1 u_2 z}\bigg(  z_1 + \frac{1}{z_1}  \bigg)   \bigg]  \nd z_1, $$
where $ \varww_{\natural} \in C_c^{\infty} (  A[1, 2] \times A[1/ {2}, 2 ]  )$ is an $X$-inert weight function (in the sense of \eqref{2eq: inert annulus, 2}). For brevity, the dependence of $ \varww_{\natural} $ on the $X$-inert variables $\bfu = (u_1, u_2)$ has been suppressed.  
In order to localize the stationary points at $\pm 1$,  we introduce a suitable (inert)  partition of unity so that
$$ \varvv_{\natural} (z;u_1, u_2) = \sum_{\pm}  \viint \varww_{\flat} (z,   z_1 )  e \hskip -1.5pt   \left[ \pm \sqrt{u_1 u_2 z}\lp z_1 + \frac{1}{z_1}  \rp \right]  \nd z_1 + O (T^{-A}), $$ 
where $  \varww_{\flat} \in C_c^{\infty} (A[1, 2] \times  D_1 [1 / {2} ]   ) $ is $X$-inert (the disc is centered at $1$); again,  the error is negligible by   Corollary  \ref{cor: staionary phase, complex} with $P = 1/X$, $Q = 1$, and $R = Z = \sqrt{ |u_1 u_2|}$. Now we introduce the new variable $ v $ defined by 
\begin{align*}
	 z_1 + \frac{1}{z_1} = v ^2 + 2,\qquad v = \sqrt{ z_1} - \frac{1}{\sqrt{z_1}},
\end{align*}
where   $\sqrt{\phantom{z} }$  is chosen to be the principal branch. It follows that 
\begin{align*}
	\varvv_{\natural} (z;u_1, u_2) = \sum_{\pm} e [\pm 2 \sqrt{u_1 u_2 z}] \viint \varvv_{\flat} (z,  v)  e   \hskip -1.5pt    \left[ \pm \sqrt{u_1 u_2 z} v^2   \right]  \nd v + O (T^{-A}),
\end{align*}
where $ \varvv_{\flat} \in C_c^{\infty} ( A[1, 2] \times D  [1/\sqrt{2} ] )  $ is $X$-inert (in the sense of \eqref{2eq: inert annulus} and \eqref{2eq: inert disc}).  Thus we conclude by Lemma \ref{lem: Weil} with $a =  1 /\sqrt{2}$, $S = 1$, and $ b =Y = X $ that 
\begin{align}\label{6eq: v, 2}
	 	\varvv_{\natural} (z;u_1, u_2) = \sum_{\pm}  \frac {e [\pm 2 \sqrt{u_1 u_2 z}]} {{\sqrt{|u_1 u_2 z|}} }  \breve{\varvv}_{\flat} (z,  \pm \sqrt{u_1 u_2 z}) + O (T^{-A}), 
\end{align}
where $ \breve{\varvv}_{\flat} (z,  u) \in C^{\infty} (A[1,2] \times A[ X T^{\vepsilon},  \infty) ) $ (compactly supported in $z$ of course)  is an $ X^2 $-bounded and $X$-inert function. 

On inserting \eqref{6eq: v, 2} into \eqref{6q: 2nd integral} and making the substitution $ \pm \sqrt{z} \ra z $, we obtain  
\begin{align}\label{6eq: integral v, 2}
	\varvv (u_1,u_2;\zeta) =  \frac 1 {{\sqrt{|u_1 u_2|}} }  \viint \varww  (z)     e  [  \zeta z^2 + 2 \sqrt{u_1 u_2 } z  ]      \nd z + O (T^{-A}),
\end{align} 
with 
\begin{align*}
\varww  (z) = 	4|z| \breve{\varvv}_{\flat} (z^2, \sqrt{u_1 u_2 } z) , 
\end{align*}
where $ \varww  \in C_c^{\infty} (A[1, \sqrt{2}]) $ is $X^2$-bounded and $X$-inert, and  for brevity $\bfu  $ (also $X$-inert) has been suppressed again. By applying Corollary \ref{cor: staionary phase, complex} with  $P = 1/X$, $Q = 1$, $R = |\zeta| + \sqrt{|u_1 u_2|}$, and $Z = |\zeta|$, we infer that the integral $\varvv (u_1,u_2;\zeta)$ is negligibly small unless  $ u_1 / \zeta , u_2 / \zeta \in A[\sqrt[4]{2}/2 , 2 \sqrt[4]{2} ]  $, say. 
Then, for such $u_1$, $u_2$, and $\zeta$,  we introduce
\begin{align*}
	w = z +  \frac {\sqrt{u_1 u_2}} {\zeta}    
\end{align*}
to transform \eqref{6eq: integral v, 2} into 
\begin{align*}
	\varvv (u_1,u_2;\zeta) = \frac 1 {{\sqrt{|u_1 u_2|}} }  e \hskip -1pt  \left[ - \frac {u_1 u_2} {\zeta } \right]  \viint \varvv (w)   e  \big[  \zeta \shskip w^2   \big]      \nd w + O (T^{-A}), 
\end{align*}  
where $ \varvv \in C_c^{\infty} (D [\sqrt{2} + 2 \sqrt[4]{2} ])$ is $ X^2 $-bounded and $X$-inert (with implicit dependence on the $X$-inert variables  $\bfu $ and $\zeta$). Finally, we deduce the expression \eqref{6eq: integral v} by Lemma \ref{lem: Weil} with  $a = \sqrt{2} + 2 \sqrt[4]{2} $, $S = X^2$, and $ b =Y = X $. Note that  we have absorbed $\sqrt{|u_1 u_2|} / |\zeta| $ into 
$\breve{\varvv} (   \zeta) $ so that the denominator reads $ |u_1 u_2|$ instead of $\sqrt{|u_1u_2|} |\zeta|$.  
\subsection{Proof of Lemma \ref{lem: w (u; Lambda)}}\label{sec: Mellin}

In view of \eqref{6eq: integral w}, \eqref{6eq: defn wn}, and \eqref{6eq: integral v}, we may write 
\begin{align*}
	 \widehat{\varww}{}^{\natural}  (u_1, u_2 ; \varLambda ) = \frac{M T^2} {|u_1 u_2 |  } \Upsilon \Big( \frac {u_1 u_2} {4 \varLambda} \Big) + O (T^{-A}),  
\end{align*} 
\begin{align*}
\Upsilon  ( z ) =   \viint_{ \widehat{\EuScript{A}} } e  (  {2Tr} /{\pi}  ) e [ -  z  \tau (r, \omega) ]  g (Mr)  V(r, \omega)  \nd \widehat{\mu} (r, \omega) , 
\end{align*}
 with  
\begin{align*}
 	\tau (r, \omega) =    1 + \frac 4 {\psi (r, \omega)},  \qquad V (r, \omega) = \breve{\varvv} (\varLambda \psi (r, \omega)), 
\end{align*}  
where  $\breve{\varvv} (\zeta)$ is $X^4$-bounded and $X$-inert  (in the sense of \eqref{2eq: inert annulus}).   
It follows from  \eqref{3eq: trh} and \eqref{3eq: defn of psi} that 
\begin{align*}
	|\psi (r, \omega)| = 2 (\cosh r - \cos \omega) = 4  ( \sinh^2 (r/2) + \sin^2 (\omega/2)  ) ,  
\end{align*} 
and that,  in the polar coordinates, if we write $  \tau (r, \omega) = \rho    (r,\omega) e^{i \theta(r,\omega)}$, then  
\begin{equation*}
	\rho(r,\omega)=\frac{\cosh r + \cos \omega}{ \cosh r - \cos \omega }, 
\end{equation*} 
\begin{equation*}
	\cos \theta(r,\omega) =    \frac{\sinh^2 r - \sin^2 \omega}{\sinh^2 r + \sin^2 \omega}, \qquad 	\sin \theta(r,\omega) =  - \frac{ 2 \sinh r\sin \omega}{\sinh^2 r + \sin^2 \omega} , 
\end{equation*} 
Moreover,   the Schwartz $g (Mr)$ is negligibly small if $|r| > M^{\vepsilon} / M$, while, by Lemma   \ref{lem: v (u; theta)}, so is   $V (r, \omega)$ if $ r^2 + 4\sin^2 (\omega/2) <  X T^{\vepsilon}/ |\varLambda | $.  

Next we introduce a smooth dyadic partition for the  integral $\Upsilon (z)$ according to the value of $r^2 + 4\cos^2 (\omega/2)$ or $ r^2 + 4\sin^2 (\omega/2) $ (note the simple identity $\sin \omega = 2 \cos (\omega/2)  \sin (\omega/2)$). Set   $ \phi_{0} =   {XT^{\vepsilon}} / {|\varLambda|} $. It follows that 
\begin{align}\label{6eq: w = w phi} 
	\widehat{\varww}{}^{\natural}  (u_1, u_2 ; \varLambda ) = \widehat{\varww}{}^{+}_{0 }  (u_1, u_2 ; \varLambda ) + \sum_{\pm} \sum_{ \phi_{0}    \Lt \shskip  \phi \shskip \Lt \shskip    1 } \widehat{\varww}{}^{\pm}_{\phi}  (u_1, u_2 ; \varLambda ) +  O (T^{-A}) , 
\end{align} for  dyadic $\phi = 2^{-k/2}$ {\rm(}$k \geqslant -1${\rm)},  where, with abuse of notation (however, it will be shown later that only those $\phi \asymp \rho$ contribute, so $\phi$ is used instead of $\rho$ in a large part of the proof),  
 \begin{align}\label{6eq: w = Phi, 2}
	\widehat{\varww}{}^{\pm}_{\phi}   (u_1, u_2 ; \varLambda ) = \frac{M T^2} {|u_1 u_2 |  }  \Upsilon_{\phi}^{\pm}  \Big( \frac {u_1 u_2} {4 \varLambda} \Big) , 
\end{align}   
\begin{align}\label{6eq: Upsilon}
	\Upsilon_{\phi}^{\pm}  ( z ) =   \viint  e  (  {2Tr} /{\pi}  ) e [ -   z  \tau (r, \omega) ]  g (Mr)  V_{\phi}^{\pm} (r, \omega)  \nd \widehat{\mu} (r, \omega) , 
\end{align}
where $V_{0}^{+}  (r, \omega) $ and $V_{\phi}^{\pm} (r, \omega) $ respectively are supported on  the regions  defined by 
\begin{align} 
   r^2 + 4 \cos^2  (\omega/2) \leqslant  \phi_{0}  , \qquad   \left\{\begin{aligned}
  &    r^2 +  4 \cos^2  (\omega/2)    \sim \phi , & & \text{ if } +,  \\
   	&    r^2 +  4 \sin^2  (\omega/2)    \sim \phi , & & \text{ if } - , 
   \end{aligned}  \right.  
\end{align}   
with bounds 
\begin{align}\label{6eq: bounds for V phi}
\frac {\partial^{i+j}  V_{\phi}^+ (r, \omega)} {\partial r^i \partial \omega^j}  \Lt X^{4 + i +j } ,   \qquad 	\frac {\partial^{i+j}  V_{\phi}^{-} (r, \omega)} {\partial r^i \partial \omega^j}  \Lt X^{4 } \bigg( \frac X  {\sqrt{\phi}} \bigg)^{i+j} . 
\end{align}
For the second bound in \eqref{6eq: bounds for V phi}, one needs to  examine the powers of $\sinh r$ and $\sin \omega$ in the derivatives of $  \breve{\varvv} (\varLambda \psi (r, \omega))$ (a variant of the  Fa\`a di Bruno formula would be helpful).   
Since on the above region $|\psi (r, \omega)| \asymp 1 $ or $ \phi$ in the $+$ or $-$ case, by Lemma   \ref{lem: v (u; theta)} (the discussion below \eqref{6eq: integral v}) we infer that $ \widehat{\varww}{}^{\pm}_{\phi}   (u_1, u_2 ; \varLambda ) $ is negligibly small unless 
\begin{align}\label{6eq: supp} 
		|u_1| , |u_2| \asymp \left\{ \begin{aligned} & |\varLambda | , & & \text{ if } +, \\
			&   |\phi \varLambda |  , & & \text{ if } -. 
		\end{aligned} \right. 
\end{align} 

\delete{
\begin{itemize}
	\item [(1)] if  $ \sqrt{\rho} \Lt  {X T^{\vepsilon}}/ {T } $ then all the $\Upsilon_{\phi}^{\pm}  ( z )$ with $\phi \neq 0$ are negligibly small, and 
	\item [(2)]   if  $ \sqrt{\rho} \Gt  {X T^{\vepsilon}}/ {T } $ then $\Upsilon_{\phi}^{\pm}  ( z )$ is not negligibly small only when $\phi \asymp \rho$. 
 \end{itemize}
} 

First let us treat the easier $ \Upsilon_{0}^+ (z)$ by Lemma  \ref{lem: w(z) Mellin}.  Now $e [ -  z  \tau (r, \omega) ]$ is $XT^{\vepsilon}$-inert since $ |z| \asymp |\varLambda| $ by \eqref{6eq: supp} while $|\tau (r, \omega)| \Lt \phi_0  $ on the support of $V_0 (r, \omega)$, but $ |\phi_0 \varLambda|  = X T^{\vepsilon}$.  It follows from  the Mellin expression in \eqref{2eq: Mellin of w(z)} for $\varww (z) = e [ -  z  \tau (r, \omega) ]$ that 
\begin{align}
	\Upsilon_{0}^{+}  (z) = \viint_{ 	\EuScript{A}_{\vepsilon} (X )  }   {\xi    (2\varnu, \vvarkappa)} I_{0}^{+} (2\varnu, \vvarkappa) \overline{\vchi_{i\varnu, \vvarkappa}(z)}  \nd \mu(\varnu, \vvarkappa) + O (T^{-A}),
\end{align}
where $ \xi (\varnu, \vvarkappa)   $ is bounded,  $ \EuScript{A}_{\vepsilon} (X) $ is from Definition \ref{defn: square}, and
\delete{\begin{align}\label{6eq: I0 (nu, m)}
	I_{0}^{+} ( \varnu, \vvarkappa) =   	\viint 
	g (Mr) V_{0}^{+} (r,\omega) \exp  ( i f_T( r, \omega; \varnu, \vvarkappa) )   \nd \widehat{\mu}(r,\omega) ,
\end{align} 
\begin{align}
f_T( r, \omega; \varnu, \vvarkappa) = {4Tr}   + {\varnu} \log \rho(r,\omega) +   {\vvarkappa} \theta(r,\omega). 
\end{align} } 
\begin{align*} 
	I_{0}^{+} ( \varnu, \vvarkappa) \Lt   \viint 
	\big|   V_{0}^{+} (r,\omega) \big|    \nd \widehat{\mu}(r,\omega) \Lt   {X^4   \phi_0  } =  \frac {X^5 T^{\vepsilon}}  {|\varLambda|} . 
\end{align*} 
Then follows the formula \eqref{6eq: Phi o} for $  |T/ \varLambda| \Lt \sqrt{\rho} $.

Now it is left to consider $ \Upsilon_{\phi}^{\pm} (z)$.   By  Lemma   \ref{lem: v (u; theta)} (or \eqref{6eq: supp}),  we have   $$  |z \tau (r, \omega) | \asymp |\varLambda \psi (r, \omega)^2  \tau (r, \omega)| \asymp  |\phi \varLambda |  . $$   
By the Mellin expression for $e [ -  z  \tau (r, \omega) ]$ as in \eqref{2eq: Mellin} in Lemma \ref{lem: e[z] Mellin}, we reformulate \eqref{6eq: Upsilon} into
\begin{align}
	\Upsilon_{\phi}^{\pm}  (z) = \viint_{ 	\EuScript{A}  (|\phi  \varLambda | )  }  \overline{\xi_{|\phi \varLambda |   }  (2\varnu, \vvarkappa)} I_{\phi}^{\pm} (2\varnu, \vvarkappa) {\vchi_{i\varnu, \vvarkappa}(z)}  \nd \mu(\varnu, \vvarkappa) + O (T^{-A}),
\end{align}
where $ \EuScript{A} (|\phi \varLambda | ) $ is from Definition  \ref{defn: square},   
$$ \xi_{|\phi \varLambda  | }  ( \varnu, \vvarkappa) = O \bigg(\frac {\log T} { |\phi \varLambda   |} \bigg), $$ 
and  
\begin{align}\label{6eq: I (nu, m)}
	I_{\phi}^{\pm} ( \varnu, \vvarkappa) =   	\viint 
	g (Mr) V_{\phi}^{\pm} (r,\omega) \exp  ( i f_T( r, \omega; \varnu, \vvarkappa)  )   \nd \widehat{\mu}(r,\omega) ,
\end{align} 
\begin{align}
	f_T( r, \omega; \varnu, \vvarkappa) = {4Tr}   + {\varnu} \log \rho(r,\omega) +   {\vvarkappa} \theta(r,\omega). 
\end{align} 
By trivial estimation, 
\begin{align*}
	I_{\phi}^{\pm} ( \varnu, \vvarkappa) = O \bigg( \frac {X^4   \sqrt{\phi}} {M}  \bigg). 
\end{align*}
Thus on re-scaling $ \overline{\xi_{|\phi \varLambda  | } (2\varnu, \vvarkappa)} I_{\phi}^{\pm} (2\varnu, \vvarkappa)$,  we get the expression 
\begin{align}\label{6eq: Phi, 2}
	\Upsilon_{\phi}^{\pm} (z) =  \frac{ X^4 \log T}{ M   \sqrt{\phi} |\varLambda|  }   \viint_{ \EuScript{A} (|\phi  \varLambda|)  } \vlambda_{\phi}^{\pm} (\varnu, \vvarkappa) \vchi_{i \varnu,  \vvarkappa          } (z) \nd \mu (\varnu, \vvarkappa) .  
\end{align}
However, this is not yet close to \eqref{6eq: Phi}. Further, we claim  that $ I_{\phi}^{\pm} (\varnu, \vvarkappa) $ is negligibly small unless $ (\varnu, \vvarkappa) $ is on the annulus $ \EuScript{A} (\sqrt{\phi} T) $. Thus in order for $  \Upsilon_{\phi}^{\pm} (z) $ not to be negligibly small one must have $ |\phi \varLambda| \asymp \sqrt{\phi} T $ or in other words $ \phi \asymp \rho$ as $  |T/\varLambda|^2 \sim  \rho $. Consequently, according as  $\sqrt{\rho} \Gt {X T^{\vepsilon}}/ {T }$ or not,  \eqref{6eq: w = w phi}   turns   into \eqref{6eq: w = w rho} or \eqref{6eq: w = w rho, 1}  if we let $ \widehat{\varww}{}^{\pm}_{\rho}  (u_1, u_2 ; \varLambda ) $ be the sum of those surviving $ \widehat{\varww}{}^{\pm}_{\phi}  (u_1, u_2 ; \varLambda )$. 




To prove the claim, let us first analyze the phase $ f_{T}  ( r, \omega) = f_{T}  ( r, \omega; \varnu, \vvarkappa) $. 
Note that if we set 
$$ A (r, \omega) = \frac{\partial \log \rho(r,\omega) }{\partial r} = \frac{\partial \theta(r,\omega) }{\partial \omega} = - \frac{2 \sinh r \cos \omega}{\sinh^2 r + \sin^2 \omega} ,$$
$$ B (r, \omega) = \frac{\partial \log \rho(r,\omega) }{\partial \omega} = - \frac{\partial \theta(r,\omega) }{\partial r} = - \frac{2 \cosh r \sin \omega}{\sinh^2 r + \sin^2 \omega}, $$
then
\begin{align*}
	\partial f_{T}  ( r, \omega) / \partial r = 4 T + \varnu A (r, \omega) - \vvarkappa B (r, \omega), \quad  \partial f_T (r, \omega) / \partial \omega =   \varnu B (r, \omega) + \vvarkappa A (r, \omega). 
\end{align*}
Consequently, 
\begin{align*}
(	\partial f_{T}  ( r, \omega) / \partial r)^2 + (	\partial f_{T}  ( r, \omega) / \partial \omega)^2 \geq \big( 4T - \sqrt{ \varnu^2 + \vvarkappa^2} \sqrt{A ( r, \omega)^2 + B ( r, \omega)^2} \big)^2 ,
\end{align*}
while 
$$ A ( r, \omega)^2 + B ( r, \omega)^2 = \frac{4}{\sinh^2 r + \sin^2 \omega}. $$
Moreover
\begin{align*}
	 \frac {\partial^{i+j} A   (r, \omega) }  {\partial r^i \partial \omega^j }, \ \frac {\partial^{i+j} B   (r, \omega) }  {\partial r^i \partial \omega^j } \Lt \bigg( \frac {1} {\sqrt{ \sinh^2 r + \sin^2 \omega}} \bigg)^{i+j}. 
\end{align*}
Note that $ \sinh^2 r + \sin^2 \omega \asymp \phi $ on the support of $ V_{\phi}^{\pm} (r,\omega)$.  By Lemma \ref{lem: staionary phase, dim 2, 2} with $P = \min \{ 1/M, \varUpsilon \}$, $ \varUpsilon = 1/X$ or $\sqrt{\phi} /X$ (see \eqref{6eq: bounds for V phi}),  $ Z = \sqrt{\varnu^2 + \vvarkappa^2},\, Q =\varPhi = \sqrt{\phi} $, and $R= T + \sqrt{\varnu^2 + \vvarkappa^2} / \sqrt{\phi} $, we deduce for $  \sqrt{ \upnu^2 + \upkappa^2 } \asymp |\phi \varLambda| \Gt X T^{\vepsilon}$ that $I_{\phi}^{\pm} (\varnu, \vvarkappa)$ is negligibly small unless $ \sqrt{\varnu^2 + \vvarkappa^2} \asymp T\sqrt{\phi}$ as claimed. 

At this point, we have arrived at:
\begin{align}\label{6eq: Phi, 3} 
	\Upsilon_{\phi}^{\pm} (z) =  \frac{ X^4 \log T}{ M   T  }   \viint_{ \EuScript{A} (\sqrt{\phi} T  )    } \vlambda_{\phi}^{\pm} (\varnu, \vvarkappa) \vchi_{i \varnu,  \vvarkappa          } (z) \nd \mu (\varnu, \vvarkappa) . 
\end{align} 
It will be simpler to extend the annulus $ \EuScript{A} (\sqrt{\phi} T  )  $ into the square $ \EuScript{A}_{\vepsilon} (\sqrt{\phi} T  )  $ (see Definition \ref{defn: square}). Our final step is to compress the square $ \EuScript{A}_{\vepsilon} (\sqrt{\phi} T  )  $ into the rectangle  $ \EuScript{A}_{\vepsilon} (T/M+X, \sqrt{\phi} T  )  $ (see Definition \ref{defn: rectangle}).


Note that the domain is compressed only if $ \sqrt{\phi} > T^{\vepsilon}/ M$ (to be strict, the first $T^{\vepsilon}$ in \eqref{6eq: defn A(V, U)} is considered to be $T^{2\vepsilon}$), so   we   need to prove for such $\phi$ that (the $\omega$-integral in) $I_{\phi}^{\pm} ( \varnu, \vvarkappa) $ is negligibly small if $ |\varnu |  > (T/M+X) T^{2\vepsilon} $. For $ \sqrt{\phi} > T^{\vepsilon}/ M$, the domain of the integral  in \eqref{6eq: I (nu, m)} may be restricted to
\begin{align*}
	| r | \leqslant M^{\vepsilon} / M, \qquad \left\{\begin{aligned}
	&	   |2 \cos (\omega/2) |  \sim \sqrt{\phi/2} , & & \text{ if $+$,}   \\
		&   |2 \sin (\omega/2) |  \sim \sqrt{\phi/2} ,  & & \text{ if $-$.}  
	\end{aligned} \right.
\end{align*}  
Recall that
\begin{align*}
 B (r, \omega) =	\frac {\partial \log \rho (r, \omega) }  {\partial \omega} = - \frac{ 2 \cosh r \sin \omega  }{  \sinh^2 r + \sin^2 \omega }, \quad A  (r, \omega) = \frac {\partial   \theta (r, \omega) }  {\partial \omega} = - \frac{ 2 \sinh r \cos \omega  }{  \sinh^2 r + \sin^2 \omega }, 
\end{align*}
so on this domain 
\begin{align*}
\big|\partial f_T (r, \omega) / \partial \omega\big| = 	\big| \varnu B (r, \omega) + \vvarkappa A (r, \omega) \big| \Gt \frac {|\varnu| } {\sqrt{\phi}} - \frac {|\vvarkappa| M^{\vepsilon}} { \phi M  } ,
\end{align*}
and  
\begin{align*}
	\frac {\partial^{j} B (r, \omega) }  {\partial \omega^j } \Lt \bigg(\frac 1 {\sqrt{\phi}} \bigg)^{j+1}, \qquad \frac {\partial^{j}   A (r, \omega) }  {\partial \omega^{j}} \Lt \frac {M^{\vepsilon}} {M \sqrt{\phi}}  \bigg(\frac 1 {\sqrt{\phi}} \bigg)^{j+1} . 
\end{align*}
For $(T/M+X  ) T^{2\vepsilon}   < |\varnu| \leqslant  \sqrt{\phi}  T^{1+\vepsilon}  $ and $|\vvarkappa|  \leqslant \sqrt{\phi}  T^{1+\vepsilon}    $, on applying Lemma \ref{lem: staionary phase, dim 1, 2} to the $\omega$-integral with $P = 1/X$ or $\sqrt{\phi}/X$ (see  \eqref{6eq: bounds for V phi}),  $ Q = \sqrt{\phi}$, $R = |\varnu| / \sqrt{\phi}$,  and $ Z = |\varnu| + |\vvarkappa| M^{\vepsilon}/ M \sqrt{\phi}  $,  we infer that the integral $I_{\phi}^+ (\varnu, \vvarkappa)$ is indeed negligibly small. 

\begin{remark}\label{rem: worse than trivial}
	Since the integral domain of $I_{\phi}^{\pm} (\varnu, \vvarkappa)$ is already very  slim in the case when $ {\phi} \asymp 1 $ and $M = T^{1-\vepsilon}$, the stationary phase analysis as in \cite[\S 13.2]{Qi-GL(3)} has no advantage over trivial estimation.  Actually, it is worse particularly when $ r^2 + \cos^2 \omega $ takes   small value on the integral domain. 
\end{remark}

\section{Proof of Proposition \ref{prop: initial reduction}}  
 
Recall that  $ 1 \Lt N < X T^{ 2+\vepsilon}$ and $ 1 \Lt C \Lt (N/T)^2 $,  and that our goal is to prove that the sum $S (N, C)$ as in  \eqref{4eq: S(N, C), Poisson} after Poisson has bound  $O (X^{12}   T^{4+\vepsilon}/ M )$.

\subsection{Contribution from the Zero Frequency}  

By \eqref{5eq: T(0,n;c)} in Lemma \ref{lem: exp sum}, 
\begin{align*}
	T (0, 0; c) & =  \left\{ \begin{aligned}
		&	\RN (c )^{3/2} \varphi (c), & & \text{ if } (c) \text{ is square, }      \\
		& 0 ,  & & \text{ if otherwise.}
	\end{aligned} \right.
\end{align*}
By Lemma \ref{lem: w (0, 0)} we have uniformly
\begin{align*}
	\widehat{\varww} (0, 0 ; \varLambda ) \Lt  X^2  M T^{ \vepsilon}  . 
\end{align*}
Thus the contribution from the zero frequency   ($n_1=n_2=0$)  is bounded by 
\begin{align*}
	 N \sum_{\RN(\frc ) \sim \sqrt{C}} \frac {1 } { \RN (\frc )^{6  } } \cdot \RN (\frc )^{5} \cdot X^2 M T^{\vepsilon} \Lt X^2 N M T^{\vepsilon} \Lt X^3 M T^{2+\vepsilon},
\end{align*}
as desired. 

\subsection{Contribution from Small Frequencies} 

Next we estimate trivially the contribution from those small frequencies $ \RN (n_1)  ,  \RN (n_2) \Lt X^2 C  T^{\vepsilon} / {N} $ (zero frequency $n_1=n_2=0$ excluded) which are not covered by Lemma \ref{lem: w (u; Lambda)}. 

Recall from  \eqref{5eq: non-vanishing}, \eqref{5eq: explicit T(n; c)}, and \eqref{6eq: trivial bound w} that
\begin{align*}
	  T (n_1, n_2; c) & \Lt \left\{ \begin{aligned}
		&	 \RN (c)^{3/2}  \RN (d), & & \text{ if } (d) = (n_1, c) = (n_2, c) , \\
		& 0, & & \text{ if otherwise, } 
	\end{aligned}  \right.  
\end{align*}
\begin{align*}
	\widehat{\varww} (u_1, u_2 ; \varLambda ) \Lt T^{2 }.  
\end{align*} 
In view of \eqref{4eq: S(N, C), Poisson}, it follows that such contribution 
is bounded by 
\begin{align*}  
	    N     \sum_{\RN(\frc) \sim C} \frac {1 } { \RN (\frc)^{3  } }     \sum_{\mathfrak{d} | \frc }   \RN (\frc)^{3/2}   \RN (\mathfrak{d})   T^{2}    \frac { X^2 C T^{\vepsilon} } { N \RN (\mathfrak{d})  } \bigg( 1+ \frac { X^2 C T^{\vepsilon} } { N \RN (\mathfrak{d})  } \bigg)   \Lt X^2 \sqrt{C}  T^{2+\vepsilon} \bigg( 1 +  \frac{X^2 C   }{N}  \bigg)    ,   
\end{align*} 
and hence $O ( X^6   T^{3+\vepsilon})$ due to $ C \Lt (N/T)^2$ and $N < X T^{2+\vepsilon}$.

	\subsection{Partition and Transformation}  By  the partition  \eqref{6eq: w = w rho} or \eqref{6eq: w = w rho, 1}  in Lemma \ref{lem: w (u; Lambda)}, we partition  $S (N, C) $ (as in   \eqref{4eq: S(N, C), Poisson}) accordingly and consider each partial sum
	\begin{align}  
		S^{\pm}_{\rho} (N, C)    =   \hskip -1pt  \frac {N } {|d_F|} \hskip -1pt \sum_{ 16 \shskip | \shskip c \, : \, \RN (c) \sim C }  \hskip -1pt \frac 1 {\RN (c)^3}  \hskip -1pt \mathop{\sum \sum}_{n_1, \, n_2}  T^{\natural} (n_1, n_2; c)  \widehat{\varww}^{\pm}_{\rho}  \hskip -1pt \bigg( \hskip -1pt \frac{n_1 \sqrt{N}}{\updelta_F c}, \frac{n_2 \sqrt{N}}{\updelta_F c} ; \frac {N } { \updelta_F c}  \hskip -1pt \bigg)  \hskip -1pt , 
	\end{align}
where $T^{\natural}$ and $ \widehat{\varww}^{\pm}_{\rho} $ are given by \eqref{5eq: Tn(n; c)}, \eqref{6eq: defn wn}, \eqref{6eq: w = w rho}, and \eqref{6eq: w = w rho, 1}.

For  $\sqrt{\rho} = \sqrt{|d_F| C} T/N$ with ${X T^{\vepsilon}} / {T} \Lt \sqrt{\rho} \Lt 1$,    or $\rho = 0$ in the $+$ case,  by the formulae in  \eqref{5eq: explicit T(n; c)} and  \eqref{6eq: w = Phi} as in Lemmas \ref{lem: exp sum} and \ref{lem: w (u; Lambda)},  we   transform $	S_{\rho}^{\pm} (N, C)$ into the form: 
	\begin{align} \label{7eq: S (N, C), 2}
M T^2 \mathop{\sum_{\RN (c) \sim C} }_{16 | c }  \frac 1 {\sqrt{\RN(c)}}   \mathop{\mathop{\sum \sum}_{  \RN(n_1), \RN(n_2) \asymp N_{\pm} }  }_{ (n_1, c) = (n_2, c)   }      h_{\delta} (c_{\mathrm{o}}, d_{\mathrm{o} } )  \frac{  g_{\gamma  } (m_1m_2 \widebar{c}_{\mathrm{o} } ) } { \sqrt{\RN(m_1 m_2  )}   }	    \Big( \frac {m_1 m_2} {c^{\star}} \Big)   \Upsilon_{\rho}^\pm  \Big( \frac {n_1 n_2} {4 \updelta_F c} \Big),
\end{align}
where $N_+ = N$, $N_- = \rho^2 N $ (by \eqref{6eq: support in u}), and $c_{\mathrm{o} }$, $c^{\star}$,  $d$,  $d_{\mathrm{o} }$, $m_1$, $m_2$, $\delta$, $\gamma$ are the notations from Lemma \ref{lem: exp sum}. 

\begin{remark}\label{rem: 3rd Poisson} 
As mentioned before,  the length  $ N_{\pm} = N$ or $\rho^2 N $  of the $n_1$- and $n_2$-sums after double Poisson  is {\it not} reduced (if $\rho \asymp 1$ in the $-$ case). It may be checked that the third Poisson as in Khan--Young \cite{Khan-Young-Sym2} will neither reduce the length of the $c$-sum nor bring in any structural change.   Therefore, after some clean-ups,  we shall proceed directly to the quadratic large sieve of Heath-Brown. 
\end{remark}

\subsection{Removal of Mellin Integrals and  {\large $p_2$}-Power Sums} 

Next, in the sum \eqref{7eq: S (N, C), 2}, we insert the expansion of $ g_{\gamma} (n)$ and the Mellin-integral expressions for $ \Upsilon_{\rho}^{\pm} (z) $ as in \eqref{5eq: g (n)},  \eqref{6eq: Phi o}, \eqref{6eq: Phi}, and \eqref{6eq: inert Mellin},   and then split the triple summation by 
\begin{align*}
	c = p_2^{k} c_{\mathrm{o}}, \quad n_1 = p_2^{ {h_1}} n_{1 \mathrm{o}}, \quad   n_2 = p_2^{ {h_2}} n_{2 \mathrm{o}}, \qquad 2 \nmid \RN (c_{\mathrm{o}} n_{1 \mathrm{o}} n_{2 \mathrm{o}}) . 
\end{align*} 
Since  the third Poisson (or functional equation) as in Khan--Young \cite{Khan-Young-Sym2} is not applied here, the triple Mellin integrals and the $p_2$-power sums will be estimated in the trivial manner. We prefer not to exhibit the complicated resulting formula. Nevertheless, it not hard to see that one is reduced to the type of sums as follows. 

\begin{defn}
	 Let $C, N_1, N_2, V, U \Gt 1$. Let $(\bfnu, \bfkappa) \in \boldsymbol{\EuScript{A}}  (V, U)$ {\rm(}$\boldsymbol{\EuScript{A}}  (V, U) =  {\EuScript{A}}  (V, U) \allowbreak  \times \allowbreak {\EuScript{A}}  (V, U)$ as in Definition {\rm\ref{defn: rectangle}}{\rm)}. Let 
	 $\omega \in \widehat{(\SO / 8 \shskip \SO){}^{\times}}${\rm(}considered as Dirichlet character{\rm)}.   Define 
	 \begin{align}
	 	S^{\omega}_{\bfnu, \bfkappa}  (\bfN, C)     =    \sumo_{\RN (c) \sim C } \frac 1 {\sqrt{\RN(c)}}  \mathop{ {\sumo \sumo}   }_{(n_1, c) = (n_2, c)  }       h  (c, d)  \frac{\omega \vchi_{c^{\star}} (\bfm)  \vchi_{i \bfnu,  \bfkappa          } (\bfn )  }  {\sqrt{\RN (\bfm)}}  \varww \bigg(    \frac   {  |\bfn|    } {\sqrt{\bfN }   }    \bigg) , 
	 \end{align}
 for  \begin{align*}
 	  h (c, d) = \left\{ \begin{aligned}
 	  	& O (1) , & & \text{ if } c^{\star} d  \, |  \shskip  c   ,\\
 	  	& 0 , & & \text{ if otherwise},  
 	  \end{aligned} \right.
 \end{align*} 
 and  $\varww (\boldsymbol{x}) = \varww(x_1) \varww (x_2) \in C_c^{\infty } (\BR_+^2)$, where  $ (d) = (n_1, c) = (n_2, c)$, $ \bfn = d \bfm $, and  the superscript {\small $\mathrm{o}$} means summation on odd integers in $\SO${\rm;} in the multi-variable notation $$ \omega \vchi_{c^{\star}} (\bfm) = \omega \vchi_{c^{\star}} (m_1 m_2) , \qquad  \vchi_{i \bfnu,  \bfkappa          } (\bfn ) = \vchi_{i \varnu_1,  \vvarkappa_1          } (n_1 ) \vchi_{i \varnu_2,  \vvarkappa_2          } (n_2 ) , $$  
 $$\RN (\bfm) = \RN (m_1 m_2), \qquad \frac {|\bfn|} {\sqrt{\bfN} } = \bigg(\frac{|n_1|} {\sqrt{N_1}}, \frac {|n_2|} {\sqrt{N_2}} \bigg), \ $$  where $\vchi_{i \varnu,  \vvarkappa          }$ and $\vchi_{q}$ are given as in  {\rm\eqref{2eq: chi vk}} and {\rm \eqref{2eq: Jacobi}}. 
\end{defn}

Note that we have  absorbed a certain character of $c_{\mathrm{o}}$ into $h_{\delta} (c_{\mathrm{o}}, d_{\mathrm{o}}) $, removed the subscript  {\small $\mathrm{o}$} from $c_{\mathrm{o}}$, $n_{1 \mathrm{o}}$, $n_{2 \mathrm{o}}$, $d_{\mathrm{o}}$, and smoothed the $n_1$, $n_2$-sums for later analysis (as in the proof of Lemma \ref{lem: weighted Heath-Brown}).  

\begin{prop}\label{prop: odd}

\delete{Let  $1 \Lt X < T$,   $ 1 \Lt N < X T^{ 2+\vepsilon}$, and $ 1 \Lt C \Lt (N/T)^2 $.  Set $\phi_{0} =   {X\sqrt{C}T^{\vepsilon}} / {N}$. 	 For $  \phi_{0} \Lt    \phi \Lt 1$, define 
\begin{align} 
N_{\flat} = \left\{ \begin{aligned}
&	N , \\
& N \phi^2  , 
\end{aligned}\right. \qquad 	V  = {N \sqrt{\phi}}  / M \sqrt{C}  + \left\{ \begin{aligned}
& X, \\
& X /\sqrt{\phi}, 
\end{aligned}\right.  ,  \qquad  U = N/\sqrt{C}    .  
\end{align}
Let $(\bfnu, \bfkappa) \in \boldsymbol{\EuScript{A}}_{\vepsilon} (V, U)$ {\rm(}$\boldsymbol{\EuScript{A}}_{\vepsilon} (V, U) =  {\EuScript{A}}_{\vepsilon} (V, U) \times  {\EuScript{A}}_{\vepsilon} (V, U)$ as in Definition {\rm\ref{defn: rectangle}}{\rm)} and $\omega \in \widehat{(\SO / 8 \shskip \SO){}^{\times}}${\rm(}considered as Dirichlet character{\rm)}.   Define 
\begin{align}
	S_{\phi}^{\bfnu, \bfkappa, \omega}  (N, C;  H) \hskip -1pt   = \hskip -2pt   \sumo_{\RN (c) \sim C }   \mathop{\mathop{\sumo \sumo}_{\RN (n_1), \RN (n_2) \asymp N_{\flat} }  }_{(n_1, c) = (n_2, c)  } \hskip -2pt   \RN (d)  h  (c, d)  \omega \vchi_{c^{\star}} (\bfm)  \vchi_{i \bfnu,  \bfkappa          } (\bfn )      \varww \bigg( \hskip -1pt  \frac   {  |\bfn|    } {\sqrt{N_{\flat}}   }  \hskip -1pt  \bigg) , 
\end{align}
for  $h (c, d) = O (H)$  
and  $\varww (\boldsymbol{x}) = \varww(x_1) \varww (x_2) \in C_c^{\infty } (\BR_+^2)$, where $ (d) = (n_1, c) = (n_2, c)$, $ \bfn = d \bfm $, and  the superscript {\small $\mathrm{o}$} means summation on odd integers in $\SO${\rm;} in the multi-variable notation $ \omega \vchi_{c^{\star}} (\bfm) = \omega \vchi_{c^{\star}} (m_1 m_2)  $, $ \vchi_{i \bfnu,  \bfkappa          } (\bfn ) = \vchi_{i \varnu_1,  \vvarkappa_1          } (n_1 ) \vchi_{i \varnu_2,  \vvarkappa_2          } (n_2 ) $, and $|\bfn| = (|n_1|, |n_2|)$ {\rm(}here $\vchi_{i \varnu,  \vvarkappa          }$ and $\vchi_{q}$ are given as in  {\rm\eqref{2eq: chi vk}} and {\rm \eqref{2eq: Jacobi}}{\rm)}. }

 We have 
\begin{align}
	S^{\omega}_{\bfnu, \bfkappa}  ( \bfN , C) \Lt  \big( \sqrt{C} + V + U    \big) (CVU)^{\vepsilon}  ,
\end{align}
uniformly in $\bfN  $.

\end{prop}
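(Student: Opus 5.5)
The proof will reduce $S^{\omega}_{\bfnu, \bfkappa}(\bfN, C)$ to a bilinear form in quadratic characters and then apply the Heath-Brown quadratic large sieve (Lemma \ref{lem:Heath-Brown LS}).

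\textbf{Step 1: parametrize.} Write $c = c_{\square}^2 c^{\star}$ with $c^{\star}$ square-free. The condition $h(c,d)\neq 0$ forces $c^{\star} d \mid c$, so $d \mid c_{\square}^2$. Split $n_i = d m_i$; since $(n_1,c)=(n_2,c)=(d)$, the $m_i$ satisfy coprimality conditions with $c/d$. Use $\vchi_{i\bfnu,\bfkappa}(d\bfm) = \vchi_{i\bfnu,\bfkappa}(d,d)\vchi_{i\bfnu,\bfkappa}(\bfm)$, a unimodular factor. The sum becomes
\[
\sum_{c_{\square}} \sum_{d \mid c_{\square}^2} \frac{1}{|c_{\square}|} \sumf_{\RN(c^{\star}) \sim C/\RN(c_{\square})^2} \frac{h}{\sqrt{\RN(c^{\star})}} \, \Big| \sum_{m_1} \cdots \Big| \cdot \Big| \sum_{m_2} \cdots \Big|
\]
with inner sums $\sum_{m} a_{m}\,\omega\vchi_{c^{\star}}(m)$, where $a_m = \vchi_{i\varnu,\vvarkappa}(m)\varww(|dm|/\sqrt{N_i})/\sqrt{\RN(m)}$.

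Two technical points need care. First, the coprimality conditions between $m_i$ and $c_{\square}$ can be imposed by restricting the $m$-support; coprimality with $c^{\star}$ is automatic because $\vchi_{c^{\star}}$ vanishes otherwise. Second, $h$ depends on $c$ only through bounded factors. Where it genuinely couples $c^{\star}$ with $d$, I will bound it in absolute value after applying Cauchy--Schwarz, since only $|h| \Lt 1$ is needed.

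\textbf{Step 2: Cauchy--Schwarz in $c^{\star}$.} Apply Cauchy--Schwarz to the $c^{\star}$-sum, then Lemma \ref{lem:Heath-Brown LS} to each factor $\sum_{c^{\star}} |\sum_m a_m \omega\vchi_{c^{\star}}(m)|^2$. Here $\RN(m) \asymp N_i/\RN(d) =: M_i$. The Heath-Brown lemma is stated for square-free $m$, so first write $m = m' m''^2$ with $m'$ square-free. The factor $\vchi_{c^{\star}}(m''^2)$ is $1$ or $0$. The $m''$-sum, weighted by $1/\RN(m'')$, costs only $T^{\vepsilon}$ after pulling it outside by Cauchy--Schwarz. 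The character $\omega$ is absorbed into the coefficients. The large sieve then gives
\[
\Big(\frac{C}{\RN(c_{\square})^2} + M_i\Big)\sum |a_m|^2 \Lt \Big(\frac{C}{\RN(c_{\square})^2} + M_i\Big)\log.
\]
Hence the $c^{\star}$-sum is bounded by
\[
\frac{1}{\sqrt{C/\RN(c_{\square})^2}} \prod_{i}\Big(\frac{C}{\RN(c_{\square})^2} + M_i\Big)^{1/2}.
\]

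\textbf{Step 3: the main obstacle.} When $M_i$ is large compared with $C$, the bound in Step 2 gives $\sqrt{M_1 M_2/C}$, which is unbounded in $\bfN$. But the claim is uniform in $\bfN$, with $V+U$ in its place. So before applying the large sieve, the long $m$-sums must be shortened. My plan is Poisson summation or a functional equation in $m$ modulo $c^{\star}$, as in Heath-Brown's smoothed treatment and the paper's Lemma \ref{lem: weighted Heath-Brown}. The twisted quadratic Gauss sum is of size $\sqrt{\RN(c^{\star})}$. The Mellin character $\vchi_{i\varnu,\vvarkappa}$, together with the smooth weight, makes the dual sum effectively of length $\RN(c^{\star})(V+U)^2/M_i$. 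This is exactly the mechanism that produces $V$ and $U$ in the bound.

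Concretely, I will prove a weighted large sieve:
\[
\sum_{c^{\star}}\Big|\sum_m \frac{\omega\vchi_{c^{\star}}(m)\vchi_{i\varnu,\vvarkappa}(m)}{\sqrt{\RN(m)}}\varww\Big(\frac{|m|}{\sqrt{M}}\Big)\Big|^2 \Lt (C + V^2 + U^2)(CVU)^{\vepsilon}.
\]
For $M \Lt C + V^2 + U^2$ this is immediate from Lemma \ref{lem:Heath-Brown LS}. For larger $M$, I will apply Poisson (Corollary \ref{cor: Poisson}) and then the large sieve to the dual sum, whose length is $\ll C(V+U)^2/M \le C+(V+U)^2$. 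The normalization works out because $M^{1/2}/\sqrt{\RN(c^{\star})}$ times the dual length balances to the same bound.

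Inserting this estimate into the Cauchy--Schwarz step gives
\[
\frac{1}{|c_{\square}|}\cdot \frac{C/\RN(c_{\square})^2 + V^2 + U^2}{\sqrt{C}/|c_{\square}|}.
\]
This is summed over $d \mid c_{\square}^2$, which contributes $T^{\vepsilon}$, and over $c_{\square}$. The result is $\Lt (\sqrt{C} + (V^2+U^2)/\sqrt{C})$. Finally, when $V+U > \sqrt{C}$, I would instead use the trivial bound on the $c$-side, or the duality form of the large sieve with roles of $m$ and $c^{\star}$ swapped, to get $V+U$ in place of $(V^2+U^2)/\sqrt C$. Making this last balance work, together with the non-square-free $c$ terms, is where I expect the real difficulty to lie.
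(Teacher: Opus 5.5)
Your skeleton matches the paper's: you split $(c)=q\mathfrak h^2$ with $\frd\mid\mathfrak h^2$, pass to $\bfm$, decouple $m_1,m_2$, apply a length-uniform quadratic large sieve, and sum over $\mathfrak h,\frd$. The gap is that the mean-value estimate you feed into it is too weak, and the last step does not close.

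\textbf{Where it fails.} Your weighted large sieve gives $C'+V^2+U^2$ for the $c^\star$-sum with $\RN(c^\star)\sim C'=C/\RN(c_\square)^2$; it must be $C'$ there, not $C$. The correct normalisation is $1/\sqrt{\RN(c)}=1/(\RN(c_\square)\sqrt{\RN(c^\star)})$, not $1/|c_\square|$. With it, a fixed $c_\square$ contributes
\[
\frac{1}{\RN(c_\square)}\cdot\frac{C'+V^2+U^2}{\sqrt{C'}}=\frac{\sqrt C}{\RN(c_\square)^2}+\frac{V^2+U^2}{\sqrt C} .
\]
The second term does not decay in $c_\square$. Summed over the $\asymp\sqrt C$ values $\RN(c_\square)\ll\sqrt C$, it gives $V^2+U^2$, not $(V^2+U^2)/\sqrt C$. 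The loss comes from every $c_\square$ with $\sqrt{C'}<V+U$, so it is present even when $V+U\leqslant\sqrt C$. Neither remedy you suggest repairs it. The dual large sieve has the same $Q+N$ shape. Estimating the $c^\star$-terms individually gives up the large-sieve saving altogether.

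\textbf{What is needed.} You need the geometric-mean bound $\sumf_{\RN(q)\sim Q}|\sum_m\cdots|^2\ll\big(Q+\sqrt Q\,(1+|\varnu|+|\vvarkappa|)\big)(\cdots)^{\vepsilon}$. With the weight $1/\sqrt{\RN(q)}$ this is $\sqrt Q+V+U$, which is Lemma~\ref{lem: weighted Heath-Brown}. Then a fixed $\mathfrak h$ contributes $\RN(\mathfrak h)^{-1}\big(\sqrt C/\RN(\mathfrak h)+V+U\big)$, and this sums to $(\sqrt C+V+U)C^{\vepsilon}$.

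\textbf{How to get it in your framework.} Move the threshold. Use the direct large sieve ($\ll C'+M$) for $M\leqslant\sqrt{C'}(V+U)$. Use Poisson modulo $c^\star$ followed by the large sieve ($\ll C'+C'(V+U)^2/M$) beyond that. This works because $\min(M,C'(V+U)^2/M)\leqslant\sqrt{C'}(V+U)$, which is the square root of the analytic conductor of $\omega\vchi_q\vchi_{i\varnu,\vvarkappa}$. You would still need two technical steps. First, M\"obius inversion to remove the coprimality with $\frd^{-1}\mathfrak h^2$ before Poisson. Second, separating $k$ from $c^\star$ in the Fourier transform at $k/\updelta_F c^\star$, for instance by homogeneity and a Mellin transform in $|k/c^\star|$.

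\textbf{How the paper does it.} It Mellin-inverts the smooth sum as $\frac1\pi\int L(1/2+it,\omega\vchi_q\vchi_{i\varnu,\vvarkappa})\widetilde{\varww}(2it)N^{it}\,\nd t$. Since $N$ enters only through $N^{it}$, this is where uniformity in $\bfN$ comes from. It then bounds the weighted second moment of these $L$-values by the approximate functional equation, of length $\asymp\sqrt Q\,(1+|t-\varnu|+|\vvarkappa|)$, combined with Heath-Brown's sieve.
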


\subsection{Proof of Proposition \ref{prop: initial reduction}} 
Given Proposition \ref{prop: odd},  we are now ready to prove Proposition  \ref{prop: initial reduction} (and consequently Theorem \ref{thm: mean-Lindelof}).     

Proposition  \ref{prop: odd} is applied with 
\begin{align*}
	& N_1   = \frac {N_{\pm}} {\RN (p_2)^{h_1}}, \quad N_2 = \frac {N_{\pm}} {\RN (p_2)^{h_2}}, \quad C \ra \frac {C} {\RN (p_2)^{k}},  \\
	 V = U = XT^{\vepsilon} & \quad   \text{ if } \rho = 0, \qquad  V =  \frac {T^{1+\vepsilon}} {M} +X  T^{\vepsilon}, \ U = \sqrt{\rho} T^{1+\vepsilon}, \quad \text{ if } \rho \neq 0,  
\end{align*}
but we may as well use the `$p_2$-free' uniform bound $ O  ( ( \sqrt{C} + V + U    ) T^{\vepsilon}  ) $.   
Note that the $(\varnu, \vvarkappa)$-integral as in  \eqref{6eq: Phi o} or \eqref{6eq: Phi} has area $O (VU) = O (X^2 T^{\vepsilon})$ or $O (\sqrt{\rho} T (T/M+X) T^{\vepsilon}  )$,  while 
the double $(\bfnu, \bfkappa)$-integral in \eqref{6eq: inert Mellin} has area  $O (X^4 T^{\vepsilon})$. 
As for the $k$, $h_1$, $h_2$-sums, 
 there are only $O (\log^3 T)$ many summation points.  

By 
the discussion above,  we have
\begin{align*}
	S^{+}_{0} (N, C) \Lt { M T^{2} } 	\cdot \frac{X^5 \sqrt{C}} {N}    \cdot X^6 \cdot    (\sqrt{C}+ X  )   T^{\vepsilon}  \Lt X^{12} M T^{2+\vepsilon}. 
\end{align*}

Similarly, $$S^{\pm}_{\rho} (N, C) \Lt { M T^{2} } 	\cdot  \frac {X^4} {MT}  \cdot    X^4\sqrt{\rho}    T  \bigg( \frac {T} { M } + X \bigg) \cdot  \bigg(\sqrt{C} + \sqrt{\rho} T  + \frac {T} { M } + X   \bigg) T^{\vepsilon}.  $$
As $     {\rho} \Lt 1$, it follows that
$$S^{\pm}_{\rho} (N, C) \Lt { X^8 T^2} 	\bigg( \frac {T} { M } + X \bigg)  (\sqrt{C} +    {T}      ) T^{\vepsilon} \Lt \frac {X^9 T^{4+\vepsilon}} {M} + X^{10} T^{3+\vepsilon}.  $$

\subsection{Proof of Proposition \ref{prop: odd}} 

Finally, we prove Proposition \ref{prop: odd} by the following corollary of the quadratic large sieve of Heath-Brown as in Lemma \ref{lem:Heath-Brown LS}. 

\begin{lem}\label{lem: weighted Heath-Brown}
	 Let  $\omega \in \widehat{(\SO / 8 \shskip \SO){}^{\times}} $   and $ \varww \in C_c^{\infty} (\BR_+) $.  Then 
	 \begin{align}\label{7eq: weighted Heath-Brown} 
	 	\sumf_{ \RN(q) \sim Q  }   \frac 1 {\sqrt{\RN (q)}} 
	 	\bigg|   \sum_{n } \frac{\omega \vchi_{q}   \vchi_{i \varnu, \vvarkappa } (n)}{\sqrt{\RN(n)}  }  \varww \bigg( \hskip -1pt  \frac   {  |n|    } {\sqrt{N  } }   \hskip -1pt  \bigg)  \bigg|^2   \Lt     \big(\hskip -1pt  \sqrt{Q} + \sqrt{1 + \varnu^2 + \vvarkappa^2}   \big) \big(Q \big(1 + \varnu^2 + \vvarkappa^2\big) \big)^{\vepsilon} ,
	 \end{align}
 for any value of $N$. 
\end{lem}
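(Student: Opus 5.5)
We plan to deduce \eqref{7eq: weighted Heath-Brown} from Lemma \ref{lem:Heath-Brown LS} by separating the smooth weight via Mellin inversion, so that the inner sum becomes an integral of short Dirichlet polynomials to which the large sieve applies. Write $t = \sqrt{1+\varnu^2+\vvarkappa^2}$. The factor $1/\sqrt{\RN(q)}$ is harmless: it equals $Q^{-1/2}$ up to constants, so it suffices to show that the unweighted sum over $\RN(q)\sim Q$ of $|\cdot|^2$ is $\Lt (Q+\sqrt{Q}\, t)(Qt)^{\vepsilon}$.

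\emph{Large $N$.} When $N \Gt (Q t)^{1+\vepsilon}$ (in fact $N \Gt Q t^2$ suffices after Poisson), the inner sum is short in effect. Indeed, for $q \neq \square$ the twisted character $\omega\vchi_q\vchi_{i\varnu,\vvarkappa}$ is non-principal with analytic conductor $\asymp Q t^2$. We therefore apply Poisson summation (Corollary \ref{cor: Poisson}) modulo $8q$ to the smooth sum, or equivalently use a functional equation / approximate functional equation for the Hecke $L$-function $L(1/2+\cdot,\omega\vchi_q\vchi_{i\varnu,\vvarkappa})$ via Mellin inversion of $\varww$. This replaces the sum of length $N$ by a dual sum of length $\asymp Qt^2/N$, together with Gauss-sum factors of modulus $\sqrt{\RN(q)}$ that produce $\vchi_q$ again by quadratic reciprocity. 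It thus suffices to treat effective lengths $N \Lt Q t^{2}(Qt)^{\vepsilon}$.

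\emph{Main bound.} Write $\varww(|n|/\sqrt N)\RN(n)^{-1/2} = \int \tilde\varww(\sigma)\, N^{\sigma}\RN(n)^{-1/2-\sigma}\,\nd\sigma$ on $\mathrm{Re}\,\sigma=0$, with $\tilde\varww$ rapidly decaying. Absorbing $\RN(n)^{-i\,\mathrm{Im}\sigma}$ into the character, Cauchy--Schwarz in $\sigma$ reduces the problem to Lemma \ref{lem:Heath-Brown LS} applied with $a_n = \omega\vchi_{i\varnu',\vvarkappa}(n)\RN(n)^{-1/2}$ on dyadic ranges $\RN(n)\sim N'$. Since the twisting characters are unimodular, $\sum|a_n|^2 \Lt \log$, and this gives the bound $(Q+N')(QN')^{\vepsilon}$. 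Two reductions are needed first. The lemma requires square-free $n$, so we write $n = n_1 n_2^2$ with $n_1$ square-free; the $n_2$-sum converges (each $n_2$ contributes a weight $\RN(n_2)^{-1}$) after Cauchy--Schwarz with $\RN(n_2)^{\vepsilon}$, and the character $\vchi_q(n_2^2)$ is principal. We also drop the restriction $\RN(q)\sim Q$ to $\RN(q)\le 2Q$ by positivity. Combining with the reduction $N' \Lt Q t^2$ would give $Q + Qt^2$, which is too weak. The remedy is to balance lengths instead: for $N \le \sqrt{Q}\, t$ we apply the large sieve directly, giving $Q + N \le Q+\sqrt Q t$; for $N > \sqrt Q t$ we dualize to the length $Qt^2/N < \sqrt Q t$. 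After dividing by $\sqrt Q$, both cases give $\sqrt{Q}+t$, as desired.

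\emph{Main obstacle.} The hard step is the dualization. The Poisson / functional-equation step for the Hecke character over $F$ must be carried out uniformly in $(\varnu,\vvarkappa)$ and with $q$ square-free but not coprime to $8$-type conditions. We also have to check that the dual sum again has the shape $\sum_{n} \omega'\vchi_q\vchi_{-i\varnu,-\vvarkappa}(n)\RN(n)^{-1/2}\varww^{*}(\cdot)$, with $\varww^{*}$ a weight whose Mellin transform is controlled (Stirling for $\Gamma$-ratios of size $t^{\pm}$). This requires quadratic reciprocity over $\SO$, in which the dependence on the class of $q$ mod $8$ is absorbed into finitely many $\omega'$. The principal cases ($q$ a unit, since $q$ is square-free) contribute only $O(t^{\vepsilon})$ and are handled trivially.
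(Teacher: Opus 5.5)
Your argument is correct in outline, but it is organized differently from the paper's proof.

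\textbf{The paper's route.} The paper never handles the smoothed sum at a fixed length $N$ directly. It first derives, from the approximate functional equation and Lemma \ref{lem:Heath-Brown LS}, the second-moment bound
\[
\sumf_{\RN(q)\sim Q}\frac{|L(s,\omega\vchi_q\vchi_{i\varnu,\vvarkappa})|^2}{\sqrt{\RN(q)}}\ll\big(\sqrt{Q}+\sqrt{|s-i\varnu|^2+\vvarkappa^2}\big)\big(Q(|s-i\varnu|^2+\vvarkappa^2)\big)^{\varepsilon},\qquad \mathrm{Re}(s)=1/2,
\]
which is the analogue of Heath-Brown's Corollary 2. Then, for every $N$, it writes the smoothed sum as $\pi^{-1}\int L(1/2+it,\omega\vchi_q\vchi_{i\varnu,\vvarkappa})\,\widetilde{\varww}(2it)N^{it}\,dt$. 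It concludes by Cauchy--Schwarz and the rapid decay of $\widetilde{\varww}$.

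\textbf{Comparison.} The balancing you do by hand (large sieve for $N\le\sqrt{Q}\,t$, dualization for $N>\sqrt{Q}\,t$) is built into the approximate functional equation, whose two sums have length about $\sqrt{Q}\,t$. Uniformity in $N$ then comes for free from passing to the critical line. Your route gets the short range straight from the large sieve, with no $L$-function input. In the long range, however, you must carry out a functional-equation (or Poisson) dualization with explicit control of the dual weight. In particular, you must separate the weight's dependence on $\RN(q)$, which enters through the conductor, before the large sieve can be applied. Done carefully, this essentially re-derives the approximate functional equation, which is why the paper's packaging is shorter.

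\textbf{Details to fix.} None of these affects the strategy.

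\emph{The Mellin step in your Main bound paragraph.} It is unnecessary and, as written, wrong. Inverting $\varww$ on $\mathrm{Re}\,\sigma=0$ produces the full $L$-series, which is not absolutely convergent, rather than Dirichlet polynomials on dyadic ranges. For $N\le\sqrt{Q}\,t$, simply take $a_n=\omega\vchi_{i\varnu,\vvarkappa}(n)\RN(n)^{-1/2}\varww(|n|/\sqrt{N})$, which is independent of $q$, and then do your square-free reduction.

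\emph{Quadratic reciprocity.} None is needed in the dualization. The dual character is the conjugate character, which again contains $\vchi_q$. The root number is unimodular and independent of $n$, so it disappears from $|\cdot|^2$. You do need to split off the powers of $p_2$ in the dual sum, since the primitive dual $L$-series need not be supported on odd $n$.

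\emph{The principal case.} For a principal character the smoothed sum has size $\asymp\sqrt{N}$, not $O(t^{\varepsilon})$. This is harmless only because $\RN(q)>Q\geq 1$ rules out units, so the character is never principal.
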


\begin{proof}
	 As a consequence of Lemma  \ref{lem:Heath-Brown LS}, it follows from the approximate functional equation that 
	 \begin{align}\label{7eq: Heath-Brown, L}
	 	\sumf_{ \RN(q) \sim Q  }  \frac { \big|  L \big(s, \omega \vchi_{q}  \vchi_{i \varnu, \vvarkappa } \big) \big|^2 } {\sqrt{\RN (q)}} 
	 	 \Lt  \big(\hskip -1pt  \sqrt{Q} + \sqrt{|s - i \varnu|^2 + \vvarkappa^2}   \big) \big(Q \big(|s - i \varnu|^2 + \vvarkappa^2\big)\big)^{\vepsilon} ,
	 \end{align}
 for $\mathrm{Re} (s) = 1/2$. This is the analogue of \cite[(3.3)]{Khan-Young-Sym2}. See also \cite[Corollary 2]{Heath-Brown-Quad-LS}. 
 
 By the Mellin inversion on $\BR_+$, we have 
 \begin{align}\label{7eq: Mellin}
 	\sum_{n } \frac{\omega \vchi_{q}   \vchi_{i \varnu, \vvarkappa } (n)}{\sqrt{\RN(n)}  }  \varww \bigg( \hskip -1pt  \frac   {  |n|    } {\sqrt{N  } }   \hskip -1pt  \bigg) = \frac 1 { \pi} \int_{-\infty}^{\infty}     L \big(1/2+ it, \omega \vchi_{q}   \vchi_{i \varnu, \vvarkappa } \big) \widetilde{\varww} (2i t) N^{i t} \nd t,
 \end{align}
where $  \widetilde{\varww} (s)$ is the usual Mellin transform of $\varww (x)$.  It is well-known that $ \widetilde{\varww} (s)$ is of rapid decay on vertical lines, so after inserting \eqref{7eq: Mellin} into the left-hand side of \eqref{7eq: weighted Heath-Brown}, we deduce \eqref{7eq: weighted Heath-Brown} directly from \eqref{7eq: Heath-Brown, L} with an application of Cauchy--Schwarz. 
\end{proof}

Let us write   $ (c) = q \mathfrak{h}^2  $ so as to avoid
 the    $\star$ and $\square$ in $c = c^{\star} c_{\square}^2 $. Let us also turn the $\bfn$-sum into the $\bfm$-sum by $\bfn = d \bfm$. By the AM--GM inequality, 
 \begin{align*}
 	 S^{\omega}_{\bfnu, \bfkappa}  (\bfN, C) \Lt R  ^{\shskip\omega}_{\varnu_1, \vvarkappa_1}  (N_1, C) + R^{\shskip \omega}_{\varnu_2, \vvarkappa_2}  (N_2, C), 
 \end{align*} 
where  $R^{\shskip\omega}_{\varnu, \vvarkappa }  (N , C)$ (for $(\varnu, \vvarkappa) \in {\EuScript{A}}  (V, U)$) is defined as the sum: 
\begin{align*}
	\begin{aligned}
	\sumo_{\RN (\mathfrak{h} ) \Lt \sqrt C } \frac 1 { \RN (\mathfrak{h}) }  \sum_{\frd | \mathfrak{h}^2 }  \,
	   \sumf_{\RN (q) \sim C / \RN (\mathfrak{h} )^2 } \frac 1 {\sqrt{\RN (q)}}	\bigg|   \sum_{ (m, \frd^{-1} \mathfrak{h}^2) = (1) } \frac{\omega \vchi_{q}   \vchi_{i \varnu, \vvarkappa } (m)}{\sqrt{\RN(m)}  }  \varww \bigg( \hskip -1pt  \frac   {  |m|    } {\sqrt{N /\RN (\frd) } }   \hskip -1pt  \bigg)  \bigg|^2 . 
	\end{aligned}
\end{align*} 
The coprimality  condition $ (m, \frd^{-1} \mathfrak{h}^2) = (1) $ is harmless as it may be easily handled by   M\"obius  inversion and Cauchy--Schwarz inequality. At any rate, it follows from Lemma \ref{lem: weighted Heath-Brown} that 
\begin{align*}
	R^{\shskip\omega}_{\varnu, \vvarkappa }  (N , C) & \Lt (CVU)^{\vepsilon}  \sumo_{\RN (\mathfrak{h} ) \Lt \sqrt C } \frac 1 { \RN (\mathfrak{h}) }  \sum_{\frd | \mathfrak{h}^2 } \sum_{\mathfrak{f} | \frd^{-1} \mathfrak{h}^2 } \mu (\mathfrak{f})^2  \bigg( \frac {\sqrt{C}} {\RN (\mathfrak{h})} + V + U \bigg) \\
	& \Lt \big( \sqrt{C} + V + U    \big) (CVU)^{\vepsilon},   
\end{align*}
as desired. 

\appendix

\section{Local Calculations for the Exponential Sum}\label{app: local exp sum}

By twisted multiplicativity \eqref{5eq: twisted mult} and \eqref{5eq: twist by units}, it suffices to study the exponential sum $  T (n_1, n_2; c)$ in  the prime-power case.  In this appendix, let $p$ be a prime 
 and assume $$c = p^{k}. $$   
In view of \eqref{5eq: (n,c)}, let $ (n_1, p^k) = (n_2, p^k) = (p^{l}) $ and write
\begin{align*}
	m_1 = n_1/ p^{l}, \qquad m_2 = n_2/ p^l.   
\end{align*}
Recall from \eqref{5eq: T, 1} that
\begin{align*}
	T (n_1, n_2; c) = \RN (c) \mathop{\sum}_{\valpha \shskip \in \SO/c \shskip \SO} \mathop{\sum_{\beta \shskip  \in (\SO /c \shskip\SO)^\times }}_{ n_2 \beta \equiv n_1 (\mathrm{mod}\, c)  } e_F \bigg[ \frac{  \valpha^2 \beta  - n_1 \valpha  }{c} \bigg]. 
\end{align*}
By the changes  $ \valpha \ra m_2 \valpha $ and $\beta \ra \overline{m}_2^{2} \beta$ (in the case   $l < k$), we may write
\begin{align}
	T (n_1, n_2; p^k) = T_{k   l} (m_1 m_2; p) ,
\end{align}
with
\begin{align}\label{app: T(m), 1}
	T_{k  l} (n ; p) =	\RN (p)^{k}  \sum_{\valpha \shskip \in \SO/ p^k \shskip \SO}    \mathop{\sum_{\beta \shskip  \in (\SO / p^k  \SO)^\times }}_{  \beta \equiv n (\mathrm{mod}\, p^{k-l} )  } e_F \bigg[ \frac{  \valpha^2  \beta   }{p^{k}} - \frac {n \valpha} {p^{k-l} }  \bigg], \qquad     (n, p^{k-l}) = (1). 
\end{align}
The case $l = k$ is simple (as the inner sum is the Ramanujan sum $S (\valpha^2, 0; p^k)$):
\begin{align}\label{app: case l=k}
	T_{k  k} (n; p) =	\left\{ \begin{aligned}
		&  \RN (p)^{5k/2} \big(1- \RN (p)^{-1}\big) , & &   k \text{ even,}  \\
		& 0, & & k \text{ odd.}
	\end{aligned} \right. 
\end{align} 
For $l < k$, the inner $\beta$-sum in \eqref{app: T(m), 1} yields the congruence condition $\valpha^2 \equiv 0 \, (\mathrm{mod}\, p^l) $, and hence 
\begin{align}\label{eq: T(m) 2}
	T_{k  l} (n ; p) =	\RN (p)^{k+l}  \mathop{\sum_{ \valpha \shskip \in \SO/ p^k   \SO} }_{\valpha^2 \equiv \shskip 0 \shskip (\mathrm{mod} \, p^l) }  e_F \bigg[ \frac{ n \valpha^2     }{p^{k}} - \frac {n \valpha} {p^{k-l} } \bigg], \qquad (n, p) = (1).   
\end{align}
Set $j =  \lceil l/2 \rceil $ (in other words, $l = 2j$ or $2j-1$). By   $ \valpha \ra p^j \valpha$, we obtain
\begin{align}\label{eq: T(m) 3}
	T_{k  l} (n ;p ) =	\RN (p)^{k+l+j}  \sum_{ \valpha \shskip \in \SO/ p^{k-2j}   \SO}  e_F \bigg[ \frac{ n (\valpha^2  - p^{l-j} \valpha )  }{p^{k-2j }}   \bigg].  
\end{align}

\begin{lem}\label{lem: odd}
	Let $2 \nmid \RN (p)$. Then  for  $l < k$ we have
	\begin{align}\label{app: T(m), k odd}
		T_{k  l} (n; p) = \epsilon (p)   \RN (p)^{3k/2 + l} \Big(\frac {n} {p} \Big) e_F \bigg[ -\frac {\widebar{4} n p^{2l} } {p^k} \bigg]   , 
	\end{align}
if $k $ is odd,  with 
\begin{align}\label{app: epsilon(p)}
	\epsilon (p)  = 
\left\{ \begin{aligned}
	& 1,  & & \text{ if }   \RN (p) \equiv 1  \, (\mathrm{mod}\, 4),  \\
& - i \vchi_-(p), & & \text{ if }   \RN (p) \equiv -1 \, (\mathrm{mod}\, 4),  
\end{aligned} \right. 
\end{align}
where  $\vchi_- : (\SO / 4 \SO)^{\times} \ra \{ \pm 1 \}$ is a character, and 
\begin{align}\label{app: T(m), k even}
	T_{k  l} (n; p) =  \RN (p)^{3k/2 + l} e_F \bigg[ -\frac {\widebar{4} n p^{2l} } {p^k} \bigg]   , 
\end{align}
if $k $ is even. 

\end{lem}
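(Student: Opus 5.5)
We start from the reduced formula \eqref{eq: T(m) 3}: with $j=\lceil l/2\rceil$ and $K=k-2j\geqslant 1$ (note $l<k$ forces $K\geqslant 0$, and we treat $K=0$ separately), the sum is a quadratic Gauss-type sum modulo $p^{K}$ with linear term $-n p^{l-j}\valpha$. Since $2\nmid \RN(p)$, $2$ and hence $4$ are invertible modulo $p$, so we complete the square:
\begin{align*}
n(\valpha^2 - p^{l-j}\valpha) = n\big(\valpha - \widebar{2} p^{l-j}\big)^2 - \widebar{4} n p^{2(l-j)} .
\end{align*}
Shifting $\valpha \ra \valpha + \widebar{2}p^{l-j}$ pulls out the factor $e_F[-\widebar{4} n p^{2l-2j}/p^{k-2j}] = e_F[-\widebar{4} n p^{2l}/p^{k}]$, which is exactly the exponential factor in \eqref{app: T(m), k odd} and \eqref{app: T(m), k even}. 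What remains is the pure quadratic Gauss sum
\begin{align*}
G(n; p^{K}) = \sum_{\valpha \in \SO/p^{K}\SO} e_F\Big[\frac{n\valpha^2}{p^{K}}\Big], \qquad (n,p)=(1).
\end{align*}

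Next we evaluate $G(n;p^K)$ by the standard reduction: for $K\geqslant 2$, writing $\valpha = \valpha_0 + p^{K-1}\valpha_1$ and summing over $\valpha_1$ gives $G(n;p^K) = \RN(p)\, G(n;p^{K-2})$ (the linear term $2n\valpha_0\valpha_1/p$ forces $p\mid \valpha_0$). Hence $G(n;p^K) = \RN(p)^{K/2}$ for $K$ even, and $G(n;p^K) = \RN(p)^{(K-1)/2} G(n;p)$ for $K$ odd. For the prime modulus, $G(n;p) = \big(\frac{n}{p}\big) G(1;p)$ as usual, and $G(1;p)$ is a quadratic Gauss sum over the residue field $\SO/p\SO$ of size $\RN(p)$, with additive character $x\mapsto e_F[x/p]$. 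By the explicit formulae of \cite{BS-Gauss-Sums} (or via the Davenport--Hasse relation when $p$ is an inert rational prime, and reduction to the classical rational Gauss sum when $\RN(p)$ is a rational prime), $G(1;p) = \epsilon(p)\sqrt{\RN(p)}$ with $\epsilon(p)=1$ if $\RN(p)\equiv 1 \ (\mathrm{mod}\ 4)$ and $\epsilon(p) = -i\vchi_-(p)$ otherwise. We will read off $\vchi_-$ from that computation, in particular using the dependence on the choice of generator $p$ through $\updelta_F^{-1}$ in $e_F$.

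Finally we collect powers: the prefactor $\RN(p)^{k+l+j}$ times $\RN(p)^{K/2} = \RN(p)^{k/2 - j}$ gives $\RN(p)^{3k/2+l}$. Here $k$ and $K$ have the same parity, so the case split on $k$ matches: for $k$ even there is no character, and for $k$ odd the factor $\epsilon(p)(\frac{n}{p})$ appears. The case $K=0$ (possible only for $k$ even, $l=k-1$ odd) gives a trivial sum equal to $1$ and is consistent with the even formula. The main obstacle is the prime-modulus Gauss sum $G(1;p)$ over $F$: pinning down $\epsilon(p)$ uniformly, including the precise sign character $\vchi_-$ and its dependence on the normalization $e_F[z] = e[\updelta_F^{-1}z]$ and the choice of generator $p$. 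For this we will invoke \cite{BS-Gauss-Sums} rather than recompute it.
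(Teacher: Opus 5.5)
Your proposal is correct and follows the paper's proof: you complete the square in \eqref{eq: T(m) 3} to pull out $e_F[-\widebar{4} n p^{2l}/p^k]$, evaluate the remaining Gauss sum modulo $p^{k-2j}$ via Boylan--Skoruppa, and collect the powers of $\RN(p)$. The only cosmetic difference is that you reduce the prime-power Gauss sum to the prime modulus by hand, whereas the paper applies \eqref{app: G(n/c), c odd} directly with $c = p^{k-2j}$.
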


Note that $ \epsilon (p) = 1 $ in the case $ d_F = -4 $ as $ \RN (p) \equiv  1 \, (\mathrm{mod}\, 4)$ for every odd $p$.  For $d_F \neq - 4$, the quadratic character $\vchi_- (n)$ is determined by \begin{align}\label{app: chi-}
	  \vchi_- (n) \equiv \Tr (n \mu^2 / \sqrt{d_F}) \, (\mathrm{mod}\, 4) , 
\end{align} for $\RN (n) \equiv -1 \, (\mathrm{mod}\, 4)$  and for any integer $\mu \in \SO $ such that the right-hand side is odd. For more details, the reader is referred to \cite[Lemma 5]{BS-Gauss-Sums}.

\begin{lem}\label{lem: even}
	Let $p = 2$, $1+i$, or $\sqrt{2} i$ be even and non-split.  
	Assume $k \geqslant  
	v_p (16)$.  Set $j =  \lceil l/2 \rceil $. 
	Define   $\gamma = k -2 j \, (\mathrm{mod}\, 4)$.  Then   for  $l < k$ we have $	T_{k l} (n; p) = 0$ unless 
	\begin{align}\label{app: l > 2}
		l \geqslant v_p (4), 
	\end{align}
in which case 
	\begin{align}\label{app: T(n;p) even}
		T_{k l} (n; p) = \RN(p)^{3k/2 + l} g_{\gamma} (n)  e_F \bigg[ \hskip -1pt   - \frac{n p^{2l} }{4p^k} \bigg]  , 
	\end{align}
 where  
 \begin{align}\label{app: defn g (n)}
 	 g_{\gamma} (n) = g   \bigg( \frac{  n}  {p^{k - 2j}} \bigg) 
 \end{align}  
is a function on $(\SO/ 8 \shskip \SO)^{\times}$, dependent only on $\gamma$ if $ k - 2j \geqslant 4 $, and explicitly given in  {\rm\eqref{app: g(n/p)}}--{\rm\eqref{app: last g(n/pk)}}.  
\end{lem}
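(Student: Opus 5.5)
We start from \eqref{eq: T(m) 3}, which reduces $T_{kl}(n;p)$ to a quadratic Gauss-type sum modulo $p^{k-2j}$:
\begin{align*}
\Sigma = \sum_{\valpha \in \SO/p^{k-2j}\SO} e_F\bigg[\frac{n(\valpha^2 - p^{l-j}\valpha)}{p^{k-2j}}\bigg].
\end{align*}
The key difference from the odd case (Lemma \ref{lem: odd}) is that $2$ is not invertible. So we cannot complete the square directly by $\valpha \ra \valpha + \widebar{2}p^{l-j}$. Instead we will complete the square $2$-adically, and this forces the condition \eqref{app: l > 2}.

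First we show the vanishing. Suppose $l < v_p(4)$, so $l \leqslant 1$ if $p=2$ and $l\leqslant 3$ if $p = 1+i$ or $\sqrt2 i$. Then $l-j$ is small, and we substitute $\valpha \ra \valpha + p^{k-2j-v_p(2)}\beta$ with $\beta$ ranging over $\SO/p^{v_p(2)}$. Here $k-2j$ is large since $k\geqslant v_p(16)$. The quadratic term changes by a multiple of $p^{k-2j}$, because $2p^{k-2j-v_p(2)}\cdot p^{k-2j-v_p(2)} \equiv 0$ and the square of the shift is divisible by $p^{2(k-2j)-2v_p(2)}$, which is at least $p^{k-2j}$. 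The linear term contributes the character $\beta \mapsto e_F[-n p^{l-j}\beta/p^{v_p(2)}]$. For small $l-j$ this character is nontrivial, because $n$ is a unit and $\updelta_F^{-1}$ has the right $p$-adic valuation in the ramified case. So the $\beta$-average kills the sum. We will check the cases $p=2$, $1+i$, $\sqrt2 i$ separately, using the different $\SO'$ in each.

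When $l \geqslant v_p(4)$, we have $p^{l-j}$ divisible by $2$ (indeed $l-j\geqslant v_p(2)$). We write $p^{l-j} = 2\eta$ and complete the square with $\valpha \ra \valpha + \eta$:
\begin{align*}
n(\valpha^2 - 2\eta\valpha) = n(\valpha-\eta)^2 - n\eta^2, \qquad n\eta^2 = \frac{n p^{2l-2j}}{4}.
\end{align*}
Dividing by $p^{k-2j}$ gives exactly the phase $-np^{2l}/(4p^k)$. Hence
\begin{align*}
T_{kl}(n;p) = \RN(p)^{k+l+j}\, e_F\big[-np^{2l}/4p^k\big]\, G(n; p^{k-2j}),
\end{align*}
where $G$ is the pure quadratic Gauss sum $\sum_{\valpha} e_F[n\valpha^2/p^{k-2j}]$. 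We then invoke the explicit evaluations of Boylan--Skoruppa \cite{BS-Gauss-Sums} for Gauss sums over quadratic fields at the even prime. These show that $G(n;p^m) = \RN(p)^{m/2}\, g(n/p^m)$, where $g$ depends only on $n \bmod 8$ and on $m \bmod 4$ once $m\geqslant 4$; for $m = 0,1,2,3$ it is given by special formulas. Since $k+l+j + (k-2j)/2 = 3k/2 + l$, this yields \eqref{app: T(n;p) even} with $g_\gamma(n) = g(n/p^{k-2j})$.

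The main obstacle is the case analysis of the $2$-adic Gauss sum in the three non-split situations $d_F\in\{-4,-8\}$ and the inert case. Its periodicity in $m$ modulo $4$ and its dependence on $n$ only modulo $8$ must be extracted from \cite{BS-Gauss-Sums}, where the conventions with $\updelta_F^{-1}$ have to be matched carefully. We expect to reduce the inert case $p = 2$ to the classical $\BZ$ Gauss sums via $\Tr$. For the ramified cases we will use the recursion $G(n;p^{m}) = \RN(p)\,G(n;p^{m-2})$ for $m \geqslant 4$, proved by the same shift trick as above. This recursion gives the period-$2$ (hence period-$4$ in $\gamma$) behaviour, and we then evaluate the base cases directly.
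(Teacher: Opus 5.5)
Your proposal is correct and follows the paper's proof. For $l<v_p(4)$ you get vanishing from the shift $\valpha\to\valpha+p^{k-2j-v_p(2)}u$: the quadratic part is unchanged, and the linear part gives a nontrivial character in $u$ exactly because $l-j=\lfloor l/2\rfloor<v_p(2)$. For $l\geqslant v_p(4)$ you complete the square with $p^{l-j}/2$ and quote Boylan--Skoruppa for the remaining Gauss sum, as the paper does.

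One correction to your optional side route for ramified $p$: the recursion $G(n;p^m)=\RN(p)\,G(n;p^{m-2})$ holds only for $m\geqslant 6$, not $m\geqslant 4$. For example, with $p=1+i$ and $n=1$ one has $g(1/p^3)=0$ but $g(1/p^5)=2$. So the base cases for the periodicity are $m=4,5$, not $m=2,3$. This is moot if you simply cite Boylan--Skoruppa, as the paper does.
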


\subsection{Explicit Formulae for Gauss Sums}

For $n,\, c\in \SO$ with $(n,c) = (1)$, define the normalized Gauss sum 
\begin{equation}\label{app: Gauss}
	g \Big( \frac{n}{c} \Big)   = \frac 1 {\sqrt{\RN (c)}}  \sum_{\valpha \in \SO/c \shskip \SO} e_F \bigg[ \frac{n \valpha^2}{c} \bigg].
\end{equation} 
Note that $g (n/c)$ is twisted multiplicative: 
\begin{equation}\label{app: twisted mult}
	g \Big( \frac{n}{c_1 c_2} \Big) =  g \Big( \frac{\widebar{c}_2 n}{c_1 } \Big) g \Big( \frac{\widebar{c}_1 n}{c_2 } \Big), 
\end{equation}
for $(c_1,c_2) = 1$. 
Thus it suffices to consider  as in \cite{BS-Gauss-Sums}  the case when $\RN (c)$ is odd and the case $c = p^k$ for $p$ even and prime. 

For $\RN (c)$ odd, it follows from \cite[(5)]{BS-Gauss-Sums} that 
\begin{align}\label{app: G(n/c), c odd}
	  g \Big( \frac{n}{c} \Big)  =   \epsilon (c)   \Big( \frac{n}{c} \Big), 
\end{align}
with 
\begin{align}\label{app: epsilon(c)} 
	\epsilon (c)  = 
	\left\{ \begin{aligned}
		& 1,  & & \text{ if }   \RN (c) \equiv 1  \, (\mathrm{mod}\, 4),  \\
		& - i \vchi_-(c), & & \text{ if }   \RN (c) \equiv -1 \, (\mathrm{mod}\, 4).  
	\end{aligned} \right. 
\end{align} 
Note that their formula is simplified as the norm is always positive in the case of imaginary quadratic fields. 

For  $\RN (p) = 2$ or $4$,  the explicit formula for $g (n/p^k)$ is given in \cite[Lemma 3]{BS-Gauss-Sums}. 
Let us assume here that $p$ is non-split. For our purpose,  it suffices to know that $  g (n/p^k)  $ depends only on  $n \, (\mathrm{mod}\, 8 \shskip \SO)$ and 
$ \gamma = k \, (\mathrm{mod}\, 4)$, provided that $k  \geqslant 4$.     

Henceforth, let $k \geqslant 2$ 
as we always have 
\begin{align}\label{app: g(n/p)}
g(n/1) = 1,  \qquad 	g (n/p ) = 0 .   
\end{align}


For the simplest inert case $p = 2$ ($d_F \equiv 5 \, (\mathrm{mod} \, 8)$),  \cite[Lemma 3]{BS-Gauss-Sums} actually implies that  $  g (n/2^k)  $ depends only on   $n \, (\mathrm{mod}\, 8 \shskip \SO)$ and   $ \delta = k \, (\mathrm{mod}\, 2)$:   
\begin{align}\label{app: g(n), inert}
	g (n/ 2^k) = \left\{ \begin{aligned}
		&2  \cdot  (2| \RN(n) )^{\delta} , & & \text{ if } \RN (n) \equiv 1 \, (\mathrm{mod}\, 4),  \\
		& 2i  \vchi_- (n) \cdot  (2| \RN(n) )^{\delta}  , & & \text{ if } \RN (n) \equiv -1  \, (\mathrm{mod}\, 4), 
	\end{aligned}
	\right.
\end{align} 
where $ ( 2 | \, \cdot \shskip )$ is the rational (Kronecker--)Jacobi symbol and $\vchi_- $ is the quadratic character on $(\SO / 4 \shskip \SO)^{\times}$ defined by \eqref{app: chi-}.   Note that in their notation  $ M = 2^k $, $ \Delta = 4 \RN (n) $, $\Delta_2 = \pm 4$ for $\RN (n) \equiv \pm 1 \, (\mathrm{mod}\, 4)$, and clearly $\Delta_2 | M$. Moreover, in view of \eqref{app: chi-}, their  $(-1|A) = \vchi_- (n)$ in the case $ \RN (n) \equiv -1  \, (\mathrm{mod}\, 4) $.  


\delete{Let us introduce $\delta \equiv k \, (\mathrm{mod}\, 2)$ and, for $d_F = -4,    -8$,  the quadratic character $\vchi_{\flat} (n)$ on $(\SO / 8 \shskip \SO)^{\times}$ 
determined by  \begin{align*}
	 \vchi_{\flat} (n) = \left\{ \begin{aligned}
	 	& 1,  & & \text{ if }   \Tr (n \mu^2 / \sqrt{d_F}) \equiv \pm 1 \, (\mathrm{mod}\, 8), \\
	 & - 1,  & & \text{ if }   \Tr (n \mu^2 / \sqrt{d_F}) \equiv \pm 3 \, (\mathrm{mod}\, 8), 
	 \end{aligned}
	 \right. 
\end{align*}  
for $\RN (n) $ odd  and for any integer $\mu \in \SO $ such that trace is odd. }

For the case that $p $ is ramified,  in their notation $ M = 2^{\lfloor k/2 \rfloor +\delta}$,  $ \Delta = 2^{2+\delta} \RN (n) $, $\Delta_2 = \pm 2^{2+\delta}$ for $\RN (n) \equiv \pm 1 \, (\mathrm{mod}\, 4)$, and moreover
\begin{align*}
	 a \equiv \left\{\begin{aligned}
	 	&\Tr (   n / 2 \sqrt{2} i) \, (\mathrm{mod}\, 2), & & \text{ if } \delta = 0, \\
	 	&\Tr (   n / 2  )\, (\mathrm{mod}\, 2), & & \text{ if } \delta = 1,  
	 \end{aligned}
	 \right.  
\end{align*}  if $p = \sqrt{2} i$ ($d_F = -8$),  
\begin{align*}
	a \equiv \left\{\begin{aligned}
		  &\Tr ( n / 2  i ) \, (\mathrm{mod}\, 2), & & \text{ if } \gamma = 0, \\
		&    \Tr ( n / 2  ) \, (\mathrm{mod}\, 2), & & \text{ if } \gamma = 2, \\
		&\Tr ( n / 2  i ) + \Tr ( n / 2  )  \, (\mathrm{mod}\, 2), & & \text{ if } \delta = 1,  
	\end{aligned}
	 \right.
\end{align*}
if   $p = 1+i$ ($d_F = -4$). 
Note that  if $\delta = 1$ then $a$  must be odd, and  one may choose $A = a$ in this case.   To this end, for $k$ odd ($\gamma = 1, 3$) let
\begin{align*}
	A   = (-1)^{(1+\gamma)/2}  \Tr (n/2), \qquad A   =  \Tr \big(i^{(1-\gamma)/2}  (1+i) n/2 \big), 
\end{align*}
if $p = \sqrt{2}i$ or $1+i$, and write
\begin{align}
	\tau_{\gamma} (n) = \left\{ \begin{aligned}
		& 1  , & & \text{ if }  \gamma = 0, 2, \\
		&(2|A)  , & &   \text{ if } \gamma = 1, 3. 
	\end{aligned}
	\right.   
\end{align} 
Note that $\tau_{\gamma} : (\SO / 8 \shskip \SO)^{\times} \ra \{ \pm 1 \}$. 

It follows from  \cite[Lemma 3]{BS-Gauss-Sums} that
\begin{align}
g (n/p^2) =  \left\{ \begin{aligned}
	&2  , & &   \text{ if } \Tr (n / \sqrt{d_F})  \text{ is even (or odd)},  \\
	& 0  , & & \text{ if }  \Tr (n / \sqrt{d_F})   \text{ is odd  (or even)},
\end{aligned}
\right.  
\end{align}
for ${d_F} = - 8$ (or $d_F = -4$), 
\begin{align}
	g (n/p^3) = 0, 
\end{align}
for $k \geqslant 4$ that
\begin{align}
	g (n/ p^k) =  \left\{ \begin{aligned}
		&2    \tau_{\gamma} (n)   , & & \text{ if } \RN (n) \equiv 1 \, (\mathrm{mod}\, 8),  \\
		& 2i \vchi_- (n) \cdot \tau_{\gamma} (n)  (-1)^{(\gamma-\delta)/ 2}   , & & \text{ if } \RN (n) \equiv 3  \, (\mathrm{mod}\, 8) ,
	\end{aligned}
	\right.
\end{align}   
if  $p = \sqrt{2} i$, and  
\begin{align}\label{app: last g(n/pk)}
	g (n/ p^k) = \left\{ \begin{aligned}
		& - 2   , & &\text{ if } \delta = 0 \text{ and } \mathrm{Tr} (n / 2 i) \equiv 2 \, (\mathrm{mod}\, 4),  \\
		&2  \tau_{\gamma} (n)  (2| \RN(n) )^{(\gamma-\delta)/2}  , & &  \text{ if otherwise},
	\end{aligned}
	\right.
\end{align}  
if  $p = 1+i$.
Recall that in the case $d_F = -4$ we always have $\RN (n) \equiv 1 \, (\mathrm{mod}\, 4)$ if $\RN (n)$ is odd.

\subsection{Proof of Lemma \ref{lem: odd}}

By completing the square, \eqref{eq: T(m) 3} is rewritten as
\begin{align*}
	 T_{k  l} (n;p ) =	\RN (p)^{3k/2+l } e_F \bigg[ \hskip -1pt    -\frac {\widebar{4} n p^{2l} } {p^k} \bigg]  g \bigg(\frac {n} {p^{k-2j}} \bigg), 
\end{align*}
by \eqref{app: Gauss}. Then \eqref{app: T(m), k odd} and \eqref{app: T(m), k even} follow directly from \eqref{app: G(n/c), c odd}.

\subsection{Proof of Lemma \ref{lem: even}}
First, in the inert case $p = 2$, we need to prove $ T_{kl}(n ; 2) = 0 $ for $ k \geqslant 4 > 2 l$. This may be seen if we let  $\valpha = 2^{k-2j-1} u + v$ in \eqref{eq: T(m) 3} so that $ \valpha^2 \equiv v^2 \, (\mathrm{mod}\, 2^{k-2j})$, then the $u$-sum vanishes as $l-j = 0$.  Similarly, in the ramified case $p = 1+i$ or $ \sqrt{2}i$, we may prove $ T_{kl}(n ; p) = 0  $ for $k \geqslant 8 > 2 l $, by letting $\valpha = p^{k-2j-2} u + v$  in \eqref{eq: T(m) 3}. 
 
Given $l \geqslant v_p (4)$ so that  $2 | p^{l - j}$,   \eqref{eq: T(m) 3} is turned into \eqref{app: T(n;p) even}  by  $ \valpha^2  - p^{l-j} \valpha = (\valpha - p^{l-j}/2)^2 - p^{2l-2j}/ 4 $.

\subsection{Proof of Lemma \ref{lem: exp sum}} 

By twisted multiplicativity  \eqref{5eq: twisted mult} and \eqref{5eq: twist by units},  Lemma \ref{lem: exp sum} is a direct consequence of    \eqref{app: case l=k} and  Lemmas  \ref{lem: odd},  \ref{lem: even}, along with  \eqref{app: twisted mult} and \eqref{app: G(n/c), c odd}.  
According to   \eqref{app: case l=k}, we need to add (alter) the definition of $g_{0} (n) $ in the case $k = l$: 
\begin{align}\label{app: defn g0(n)}
	g_{0} (n) = 	\left\{ \begin{aligned}
		&   1- 1/\RN (p_2)   , & &   k \text{ even,}  \\
		& 0, & & k \text{ odd.}
	\end{aligned} \right. 
\end{align}  

\begin{remark}\label{rem: 4 | n1n2}
The purpose of  $ \SB \hookrightarrow \SB (16)$ introduced at the beginning is to ensure $ l \geqslant v_{p_2} (4)$ as in  \eqref{app: l > 2}, or equivalently  $ 4 | (n_1, n_2)$ as in \eqref{5eq: non-vanishing}.  It has been used in the proof of Lemma \ref{lem: even}. Also, note that Lemma  \ref{lem: odd} yields $ e_F [- \widebar{4 } \widebar{p}_2^k  n_1 n_2/ c_{\mathrm{o}}]$, so  $ 4 | (n_1, n_2)$ helps us  rewrite  it  as $   e_F [-   \widebar{p}_2^k  n_1 n_2/ 4 c_{\mathrm{o}}]$.  
\end{remark}

\section{Supplement: The Even Split Case}\label{app: local exp sum, split}

For completeness, in this appendix, we record some results in the even split case  
\begin{align*}
	c = p^{k} q^{r}, \qquad p = \frac {1 + \sqrt{7}i}  2, \ q = \frac { 1 - \sqrt{7}i}  2.  
\end{align*}   
It is similar to the inert and ramified cases  but requires more notation, so the reader who is content with the simpler cases may skip this appendix.    

Let $(n_1,p^k q^r) = (n_2,p^k q^r) = (p^l q^s)$ and $n_1 = p^l q^s m_1$, $n_2 = p^l q^s m_2$.
Then with the definition in \eqref{app: T(m), 1} we have
\begin{align}
	T (n_1, n_2; p^k q^r) = T_{k  l} (\widebar{q}^{r} q^{2s} m_1 m_2 ;p) T_{r s} (\widebar{p}^{k} p^{2l} m_1 m_2 ;q).
\end{align}
For $k = l$ (or $r = s$), $ T_{k  k} (n ;p) $ (or $T_{rr} (n; q)$) has been evaluated in \eqref{app: case l=k}.

\begin{lem}\label{lem: split}
	 Assume $ k \geqslant 4$. Set $j = \lceil l/2\rceil$. Let $h$ be an integer. Define $\delta = k \allowbreak\,(\mathrm{mod}\, 2)$ and $\epsilon = h  \,(\mathrm{mod}\, 2)$.   Then   for $l < k$  we have  $T_{k  l} (q^h n ;p) = 0$ unless $l \geq 2$, in which case
	 \begin{align}\label{Beq: even split}
	 	T_{k  l} (q^h n ;p) = \RN(p)^{3k/2 + l} g_{\delta,\epsilon}(n; p ) e_F \bigg[ \hskip -1.5pt      - \frac{ q^{h} p^{2l} n  }{4p^k} \bigg] ,
	 \end{align}
 where  
 \begin{align}
 	 g_{\delta,\epsilon}(n ; p) = g \bigg( \frac{q^h n}  {p^{k - 2j}} \bigg) 
 \end{align}  is a function on $ (\SO / 8\SO)^\times $, dependent only on $\delta$ and $\epsilon$ if $k-2j \geqslant 2$, and explicitly given in {\rm \eqref{app2: g (n/1), g(n/p)}}--{\rm\eqref{app2: A(n)}}{\rm;} it is understood that $ q^h = \widebar{q}^{\, -h} $ if $h < 0$. 
\end{lem}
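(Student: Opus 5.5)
We mirror the proof of Lemma \ref{lem: even}, now at the split prime $p$ with $\RN(p)=2$, so that $\SO/p^k\SO\cong\BZ/2^k\BZ$. The multiplier $q^h$ is a unit modulo $p^k$, so we apply \eqref{eq: T(m) 3} with $n$ replaced by $q^h n$:
\begin{align*}
T_{kl}(q^h n;p)=\RN(p)^{k+l+j}\sum_{\valpha \in \SO/p^{k-2j}\SO} e_F\bigg[\frac{q^h n(\valpha^2-p^{l-j}\valpha)}{p^{k-2j}}\bigg].
\end{align*}
Because $p$ is unramified and of degree one, $v_p(2)=1$ and $v_p(4)=2$, so the threshold in \eqref{app: l > 2} becomes $l\geqslant 2$.

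\textbf{Vanishing for $l\leqslant 1$.} Here $j=l$, hence $l-j=0$ and the linear term is $-q^hn\valpha/p^{k-2j}$ with a unit coefficient. Write $\valpha=p^{k-2j-1}u+v$ with $u$ running over $\SO/p\SO\cong\BZ/2\BZ$. Since $k-2j-1\geqslant 1$ and $2\in p\SO$, we have $\valpha^2\equiv v^2 \pmod{p^{k-2j}}$. The linear term then contributes $e_F[-q^hnu/p]$ summed over $u\in\SO/p\SO$, which is a complete nontrivial additive character sum (recall $e_F$ involves $\updelta_F^{-1}$, and $p\nmid\updelta_F$ since $7$ is not divisible by $p$ in the relevant sense). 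Hence it vanishes. The hypothesis $k\geqslant 4$ guarantees $k-2j-1\geqslant 1$.

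\textbf{Main term for $l\geqslant 2$.} Now $l-j\geqslant 1$, so $p^{l-j}/2$ is integral at $p$. We complete the square, $\valpha^2-p^{l-j}\valpha=(\valpha-p^{l-j}/2)^2-p^{2l-2j}/4$, and shift $\valpha$; the shift is legitimate modulo $p^{k-2j}$ because $1/2$ is interpreted $p$-adically, with $2/p$ a $p$-unit. This produces
\begin{align*}
\RN(p)^{k+l+j}\, e_F\bigg[-\frac{q^h n p^{2l}}{4p^k}\bigg]\sqrt{\RN(p)^{k-2j}}\; g\bigg(\frac{q^hn}{p^{k-2j}}\bigg),
\end{align*}
and $k+l+j+(k-2j)/2=3k/2+l$ gives \eqref{Beq: even split}. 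Two points need care. First, the exponential must be read with $4$ inverted modulo $q^r$ and the odd part. Second, the factor $e_F[\cdot]$ should be recorded modulo $p^k$ only, since $T_{kl}$ is a local factor in the twisted-multiplicative decomposition. The same bookkeeping already appears in Lemma \ref{lem: even}.

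\textbf{Dependence of the Gauss sum.} It remains to show that $g(q^hn/p^{k-2j})$ depends only on $n \bmod 8$, $\delta$ and $\epsilon$ once $k-2j\geqslant 2$; note $\delta\equiv k-2j\pmod 2$. We evaluate the normalized Gauss sum over $\BZ_2$ via \cite[Lemma 3]{BS-Gauss-Sums}, or directly via the classical formula for $\sum_{x\bmod 2^m}e(ax^2/2^m)$ with $a$ odd. That formula equals $2^{m/2}$ times a factor $(1+i^{a})$ multiplied by $(2|a)^m$ for $m\geqslant 2$, and it vanishes for $m=1$. The unit $a$ is the image of $q^hn/\updelta_F$ in $\BZ_2^\times$.

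For the dependence on $h$: $q\equiv\bar p\equiv 1-p$ in $\BZ_2$ under $p\mapsto 0$. Since $p\bar p=2$, the image of $q$ is $2/p$, a unit, and its class modulo $8$ is fixed. Its square lies in a fixed class of $(\BZ/8)^{\times}$, so only $\epsilon=h \bmod 2$ enters through $(2|a)$ and $i^{a}$ once the mod-$8$ class of $q^2$ has been checked.

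\textbf{Main obstacle.} I expect the last step to be the main obstacle. One must pin down explicitly the $2$-adic image of $q$ and of $\updelta_F^{-1}=1/\sqrt{-7}$ modulo $8$, and then tabulate $g_{\delta,\epsilon}(n;p)$ in the exceptional small cases $k-2j=0,1$ (giving values $1$ and $0$) separately from the generic case.
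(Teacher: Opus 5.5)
Your outline follows the paper's route. The paper gives no separate proof of Lemma~\ref{lem: split}: it says the split case is ``similar to the inert and ramified cases'' and evaluates the Gauss sum with \cite[Lemma 3]{BS-Gauss-Sums}. That means the splitting $\valpha=p^{k-2j-1}u+v$ for $l\leqslant 1$, completing the square for $l\geqslant 2$, and then evaluating $g(q^hn/p^{k-2j})$.

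Your vanishing step and your completion of the square are correct. Evaluating the Gauss sum through $\SO/p^m\SO\cong\BZ/2^m\BZ$ and the classical formula is a valid, more elementary substitute for Boylan--Skoruppa. One small correction to your bookkeeping remark: $4=p^2q^2$ is not invertible modulo $q^r$. In the $p$-factor, $p^{-2}$ is absorbed by $p^{2l}$, and only $q^{-2}$ is read as an inverse modulo a power of $p$. This is exactly what your shift by $p^{l-j}/2=p^{l-j-1}q^{-1}$ produces.

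\textbf{The error.} The last step misidentifies the odd unit: $a$ is not the image of $q^hn/\updelta_F$. Put $m=k-2j$ and let $\iota_p,\iota_q\colon F\hookrightarrow\BQ_2$ be the two embeddings. Since $1/p^m=q^m/2^m$, and $\iota_q(q^mx/\updelta_F)\in 2^m\BZ_2$ for $x\in\SO$, one has
\[
e_F\Big[\frac{x}{p^m}\Big]=e\bigg(\frac{\Tr(q^mx/\updelta_F)}{2^m}\bigg)=e\bigg(\frac{\iota_p(q^mx/\updelta_F)}{2^m}\bigg).
\]
So the correct unit is $a=\iota_p(q^{h+m}n/\updelta_F)$. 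For $m\geqslant 2$ this gives $g(q^hn/p^m)=(1+i^a)(2|a)^m=\sqrt2\,(2|a)^{1+\delta}e(a/8)$.

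Hence $\delta$ enters through $a$ itself, not only through the exponent of $(2|a)$. This is what the exponent $\epsilon+\delta$ in the paper's $A(q^{\epsilon+\delta}n)$ encodes. Your claim that only $\epsilon$ enters through $(2|a)$ and $i^a$ is false.

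Here is a concrete check, with $\updelta_F=\sqrt7\, i=p-q$, $h=0$, $n=1$, $m=3$. We have $\Tr(q^3/\updelta_F)=1$, so $e_F[\valpha^2/p^3]=e(\valpha^2/8)$ on the representatives $\valpha=0,\dots,7$. This gives $g(1/p^3)=\sqrt2\,e(1/8)$. Your $a=\iota_p(1/\updelta_F)\equiv 3 \pmod 8$ would give $\sqrt2\,e(3/8)$ instead.

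The fix also removes your ``main obstacle''. One has $\iota_p(p)\equiv 6\pmod 8$, hence $\iota_p(q)\equiv\iota_p(\updelta_F)\equiv 3\pmod 8$, and $\iota_p(q)^2\equiv 1\pmod 8$ holds automatically. Therefore $a\equiv 3^{1+\epsilon+\delta}\,\iota_p(n)\pmod 8$.

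The qualitative claim survives: the value is a function of $n \bmod 8$, $\delta$ and $\epsilon$, and that is all the main argument uses. But your explicit table would be wrong whenever $k$ is odd.

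If you compare with the paper's global-trace form, read $A(x)=\Tr(q^2x/\updelta_F)$ uniformly, so the odd case is $\Tr(q^3n/\updelta_F)$. The displayed $\Tr(qn/\updelta_F)$ equals $-1$ at $n=1$, not $1$.
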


Lemma \ref{lem: split} is still valid if the roles of $p$ and $q$ are exchanged. 

Consequently, Lemma \ref{lem: exp sum} may be readily extended to the split case with $h_{\delta} (c_{\mathrm{o}}, d_{\mathrm{o} } )$  and  $ g_{\gamma  } (n) $ replaced by some $h_{\delta, \epsilon} (c_{\mathrm{o}}, d_{\mathrm{o} } )$  and  $ g_{\delta, \epsilon  } (n) $ for $ \delta = k \, (\mathrm{mod}\, 2)$  and $ \epsilon = r \, (\mathrm{mod}\, 2) $.

\subsection{Explicit Formulae for Gauss Sums: The Even Split Case}

For $\RN (n)$ odd, consider the Gauss sum $ g (q^{h} n / p^k) $; the results for $g (p^{ f} n / q^{r}) $ are similar by symmetry. For  $h < 0$  one should view $q^{h} = \widebar{q}^{\, - h}$. However,  by the definition in \eqref{app: G(n/c), c odd} it is clear that $ g (q^{ h} n / p^k) = g (q^{ -h} n / p^k) $. 

Still we have
\begin{align}\label{app2: g (n/1), g(n/p)}
g (q^{ h} n / 1 ) = 1, 	\qquad   g (q^{ h} n /p ) = 0 .   
\end{align} 
For $k \geqslant 2$, it turns out that $  g (q^{ h} n/p^k)  $ depends only on   $n \, (\mathrm{mod}\, 8 \shskip \SO)$, 
$ \delta = k \, (\mathrm{mod}\, 2)$, and $ \epsilon = h \, (\mathrm{mod}\, 2) $.   
 Indeed, by \cite[Lemma 3]{BS-Gauss-Sums} we have
\begin{align}
	g ( q^{ h} n / p^k) = 
 	\sqrt{2} \cdot \big(2| A  (q^{  \epsilon+\delta} n  ) \big)^{1+\delta}  e \big( A  (q^{  \epsilon+\delta} n   )/8 \big),
\end{align}
with 
\begin{align}\label{app2: A(n)}
	 A (n) = \Tr (q^2 n /\sqrt{d_F}), \qquad A (q n) = \Tr (q n / \sqrt{d_F}) ,
\end{align}  
which are easily proven to be odd. 
For $h \geqslant 0$, in their notation  $M = 2^k$, $M \omega =  q^{k+h} n$,  $ \Delta =  2^{2+k + h }\allowbreak  \RN (n)$, and clearly $ 4 M | \Delta$ (so the arithmetic is quite different from the inert or ramified case). Note that their $A$ is replaced by our $A (n)$ or $A(qn)$ according as $k+h$ is even or odd (due to $A \equiv A( q^{  \epsilon+\delta} n  )\, (\mathrm{mod}\, 2^{2+\delta})$).   



\begin{thebibliography}{} 
		
		\bibitem[AHLQ]{AHLQ-Bessel}
		K.~Aggarwal, R.~Holowinsky, Y.~Lin, and Z.~Qi.
		\newblock A {B}essel delta method and exponential sums for {${\rm GL}(2)$}.
		\newblock {\em Q. J. Math.}, 71(3):1143--1168, 2020.
		
		\bibitem[Bal]{Balkanova-Sym2-Maass}
		O.~Balkanova.
		\newblock The first moment of {M}aa\ss\ form symmetric square {$L$}-functions.
		\newblock {\em Ramanujan J.}, 55(2):761--781, 2021.
		
		\bibitem[BB]{Blomer-Brumley}
		V.~Blomer and F.~Brumley.
		\newblock On the {R}amanujan conjecture over number fields.
		\newblock {\em Ann. of Math. (2)}, 174(1):581--605, 2011.
		
		\bibitem[BBCL]{BBCL-PGT-Picard-3}
		A.~Balog, A.~Bir\'o, G.~Cherubini, and N.~Laaksonen.
		\newblock Bykovskii-type theorem for the {P}icard manifold.
		\newblock {\em Int. Math. Res. Not. IMRN}, (3):1893--1921, 2022.
		
		\bibitem[BCC{\etalchar{+}}]{BCCFL-PGT-Picard-1}
		O.~Balkanova, D.~Chatzakos, G.~Cherubini, D.~Frolenkov, and N.~Laaksonen.
		\newblock Prime geodesic theorem in the 3-dimensional hyperbolic space.
		\newblock {\em Trans. Amer. Math. Soc.}, 372(8):5355--5374, 2019.
		
		\bibitem[BF1]{BF-Mean-Sym2}
		O.~Balkanova and D.~Frolenkov.
		\newblock The mean value of symmetric square {$L$}-functions.
		\newblock {\em Algebra Number Theory}, 12(1):35--59, 2018.
		
		\bibitem[BF2]{BF-PGT-Picard-2}
		O.~Balkanova and D.~Frolenkov.
		\newblock Prime geodesic theorem for the {P}icard manifold.
		\newblock {\em Adv. Math.}, 375:107377, 42, 2020.
		
		\bibitem[BF3]{BF-Sym2-Non-vanishing}
		O.~Balkanova and D.~Frolenkov.
		\newblock Non-vanishing of {M}aass form symmetric square {$L$}-functions.
		\newblock {\em J. Math. Anal. Appl.}, 500(2):Paper No. 125148, 23, 2021.
		
		\bibitem[BF4]{BF-Picard-Sym2}
		O.~Balkanova and D.~Frolenkov.
		\newblock The second moment of symmetric square {$L$}-functions over {G}aussian
		integers.
		\newblock {\em Proc. Roy. Soc. Edinburgh Sect. A}, 152(1):54--80, 2022.
		
		\bibitem[BF5]{BF-Voronoi}
		O.~Balkanova and D.~Frolenkov.
		\newblock A {V}oronoi summation formula for non-holomorphic {M}aass forms of
		half-integral weight.
		\newblock {\em Monatsh. Math.}, 203(4):733--764, 2024.
		
		\bibitem[BF6]{BF-KY-Sym2-2}
		O.~Balkanova and D.~Frolenkov.
		\newblock Hybrid subconvexity for {M}aass form symmetric-square
		{$L$}-functions.
		\newblock {arXiv:2408.06735}, 2024.
		
		\bibitem[BKY]{BKY-Mass}
		V.~Blomer, R.~Khan, and M.~P. Young.
		\newblock Distribution of mass of holomorphic cusp forms.
		\newblock {\em Duke Math. J.}, 162(14):2609--2644, 2013.
		
		\bibitem[Blo1]{Blomer-Sym2-Hol}
		V.~Blomer.
		\newblock On the central value of symmetric square {$L$}-functions.
		\newblock {\em Math. Z.}, 260(4):755--777, 2008.
		
		\bibitem[Blo2]{Blomer-Hecke-Quad}
		V.~Blomer.
		\newblock Sums of {H}ecke eigenvalues over values of quadratic polynomials.
		\newblock {\em Int. Math. Res. Not. IMRN}, (16):Art. ID rnn059. 29, 2008.
		
		\bibitem[Blo3]{Blomer}
		V.~Blomer.
		\newblock Subconvexity for twisted {$L$}-functions on {${\rm GL}(3)$}.
		\newblock {\em Amer. J. Math.}, 134(5):1385--1421, 2012.
		
		\bibitem[BM]{BM-Kuz-Spherical}
		R.~W. Bruggeman and R.~J. Miatello.
		\newblock Sum formula for {${\rm SL}_2$} over a number field and {S}elberg type
		estimate for exceptional eigenvalues.
		\newblock {\em Geom. Funct. Anal.}, 8(4):627--655, 1998.
		
		\bibitem[BS]{BS-Gauss-Sums}
		H.~Boylan and N.-P. Skoruppa.
		\newblock Explicit formulas for {H}ecke {G}auss sums in quadratic number
		fields.
		\newblock {\em Abh. Math. Semin. Univ. Hambg.}, 80(2):213--226, 2010.
		
		\bibitem[Byk]{Bykovskii}
		V.~A. Bykovski\u{\i}.
		\newblock Density theorems and the mean value of arithmetic functions on short
		intervals.
		\newblock {\em Zap. Nauchn. Sem. S.-Peterburg. Otdel. Mat. Inst. Steklov.
			(POMI)}, 212:56--70, 196, 1994.
		
		\bibitem[Cai]{Cai-PGT}
		Y.~Cai.
		\newblock Prime geodesic theorem.
		\newblock {\em J. Th\'eor. Nombres Bordeaux}, 14(1):59--72, 2002.
		
		\bibitem[CI]{CI-Cubic}
		J.~B. Conrey and H.~Iwaniec.
		\newblock The cubic moment of central values of automorphic {$L$}-functions.
		\newblock {\em Ann. of Math. (2)}, 151(3):1175--1216, 2000.
		
		\bibitem[CWZ]{C-Wu-Z-KB-Formula}
		G.~Cherubini, H.~Wu, and G.~Z\'abr\'adi.
		\newblock On {K}uznetsov-{B}ykovskii's formula of counting prime geodesics.
		\newblock {\em Math. Z.}, 300(1):881--928, 2022.
		
		\bibitem[DK]{Khan-Das-3rd-Moment}
		S.~Das and R.~Khan.
		\newblock The third moment of symmetric square {$L$}-functions.
		\newblock {\em Q. J. Math.}, 69(3):\allowbreak1063--1087, 2018.
		
			\bibitem[EGM]{EGM}
		J.~Elstrodt, F.~Grunewald, and J.~Mennicke.
		\newblock {\em Groups {A}cting on {H}yperbolic {S}pace}.
		\newblock Springer Monographs in Mathematics. Springer-Verlag, Berlin, 1998.
		
		\bibitem[Fro]{BF-KY-Sym2-1}
		D.~Frolenkov.
		\newblock The second moment of {M}aass form symmetric square {$L$}-functions at
		the central point.
		\newblock {arXiv:2408.05929}, 2024.
		
		\bibitem[GJ]{GJ-GL(2)-GL(3)}
		S.~Gelbart and H.~Jacquet.
		\newblock A relation between automorphic representations of {${\rm GL}(2)$}\
		and {${\rm GL}(3)$}.
		\newblock {\em Ann. Sci. \'Ecole Norm. Sup. (4)}, 11(4):471--542, 1978.
		
		\bibitem[GL]{Heath-Brown-Quad-LS-3}
		L.~Goldmakher and B.~Louvel.
		\newblock A quadratic large sieve inequality over number fields.
		\newblock {\em Math. Proc. Cambridge Philos. Soc.}, 154(2):193--212, 2013.
		
		\bibitem[HB]{Heath-Brown-Quad-LS}
		D.~R. Heath-Brown.
		\newblock A mean value estimate for real character sums.
		\newblock {\em Acta Arith.}, 72(3):235--275, 1995.
		
		\bibitem[IK]{IK}
		H.~Iwaniec and E.~Kowalski.
		\newblock {\em Analytic {N}umber {T}heory},   {  American
			Mathematical Society Colloquium Publications, Vol. 53}.
		\newblock American Mathematical Society, Providence, RI, 2004.
		
		\bibitem[IM]{Iwaniec-Michel-Sym2}
		H.~Iwaniec and P.~Michel.
		\newblock The second moment of the symmetric square {$L$}-functions.
		\newblock {\em Ann. Acad. Sci. Fenn. Math.}, 26(2):465--482, 2001.
		
		\bibitem[Iwa]{Iwaniec-PGT}
		H.~Iwaniec.
		\newblock Prime geodesic theorem.
		\newblock {\em J. Reine Angew. Math.}, 349:136--159, 1984.
		
		\bibitem[Jac]{Jacquet-RTF}
		H.~Jacquet.
		\newblock On the nonvanishing of some {$L$}-functions.
		\newblock {\em Proc. Indian Acad. Sci. Math. Sci.}, 97(1-3):117--155 (1988),
		1987.
		
		\bibitem[Joh]{Faa-di-Bruno}
		W.~P. Johnson.
		\newblock The curious history of {F}a\`a di {B}runo's formula.
		\newblock {\em Amer. Math. Monthly}, 109(3):\allowbreak217--234, 2002.
		
		
		\bibitem[Kha]{Khan-Sym2-Non-vanishing}
		R.~Khan.
		\newblock Non-vanishing of the symmetric square {$L$}-function at the central
		point.
		\newblock {\em Proc. Lond. Math. Soc. (3)}, 100(3):736--762, 2010.
		
		\bibitem[Koy]{Koyama-PGT-Picard}
		S.-y. Koyama.
		\newblock Prime geodesic theorem for the {P}icard manifold under the
		mean-{L}indel\"of hypothesis.
		\newblock {\em Forum Math.}, 13(6):781--793, 2001.
		
		\bibitem[KPY]{KPY-Stationary-Phase}
		E.~M. Kiral, I.~Petrow, and M.w~P. Young.
		\newblock Oscillatory integrals with uniformity in parameters.
		\newblock {\em J. Th\'eor. Nombres Bordeaux}, 31(1):145--159, 2019.
		
		\bibitem[KY]{Khan-Young-Sym2}
		R.~Khan and M.~P. Young.
		\newblock Moments and hybrid subconvexity for symmetric-square {$L$}-functions.
		\newblock {\em J. Inst. Math. Jussieu}, 22(5):2029--2073, 2023.
		
		\bibitem[Lam]{Lam-Sym2-Hol}
		J.~W.~C. Lam.
		\newblock The second moment of the central values of the symmetric square
		{$L$}-functions.
		\newblock {\em Ramanujan J.}, 38(1):129--145, 2015.
		
		\bibitem[LG]{B-Mo2}
		H.~Lokvenec-Guleska.
		\newblock {\em {S}um {F}ormula for $\mathrm{SL}_2$ over {I}maginary {Q}uadratic
			{N}umber {F}ields}.
		\newblock Ph.D. Thesis. Utrecht University, 2004.
		
		\bibitem[Liu]{Liu-Sym2-1st-Moment}
		S.~Liu.
		\newblock The first moment of central values of symmetric square
		{$L$}-functions in the weight aspect.
		\newblock {\em Ramanujan J.}, 46(3):775--794, 2018.
		
		\bibitem[LQ1]{Qi-Liu-LLZ}
		S.-C. Liu. and Z.~Qi.
		\newblock Low-lying zeros of {$L$}-functions for {M}aass forms over imaginary
		quadratic fields.
		\newblock {\em Mathematika}, 66(3):777--805, 2020.
		
		\bibitem[LQ2]{Qi-Liu-Moments}
		S.-C. Liu and Z.~Qi.
		\newblock Moments of central {$L$}-values for {M}aass forms over imaginary
		quadratic fields.
		\newblock {\em Trans. Amer. Math. Soc.}, 375(5):3381--3410, 2022.
		
		\bibitem[LS]{Luo-Sarnak-QE}
		W.~Z. Luo and P.~Sarnak.
		\newblock Quantum ergodicity of eigenfunctions on {${\rm PSL}_2(\bold
			Z)\backslash \bold H^2$}.
		\newblock {\em Inst. Hautes \'Etudes Sci. Publ. Math.}, (81):207--237, 1995.
		
		\bibitem[Luo]{Luo-Sym2-2nd-Moment}
		W.~Luo.
		\newblock Central values of the symmetric square {$L$}-functions.
		\newblock {\em Proc. Amer. Math. Soc.}, 140(10):\allowbreak3313--3322, 2012.
		
		\bibitem[Mol]{Molteni-L(1)}
		G.~Molteni.
		\newblock Upper and lower bounds at {$s=1$} for certain {D}irichlet series with
		{E}uler product.
		\newblock {\em Duke Math. J.}, 111(1):133--158, 2002.
		
		\bibitem[Nel1]{Nelson-Sym2}
		P.~D. Nelson.
		\newblock Bounds for twisted symmetric square {$L$}-functions via half-integral
		weight periods.
		\newblock {\em Forum Math. Sigma}, 8:Paper No. e44, 21, 2020.
		
		\bibitem[Nel2]{Nelson-Eisenstein}
		P.~H. Nelson.
		\newblock Eisenstein series and the cubic moment for {$\text{PGL}_2$}.
		\newblock {arXiv:1911.06310}, 2019.
		
		\bibitem[Neu]{Neukirch-ANT}
		J.~Neukirch.
		\newblock {\em Algebraic {N}umber {T}heory},   {  Grundlehren der
			mathematischen Wissenschaften, Vol. 322}.
		\newblock Springer-Verlag, Berlin, 1999.
		
		\bibitem[Ono]{Heath-Brown-Quad-LS-2}
		K.~Onodera.
		\newblock Bound for the sum involving the {J}acobi symbol in {$\Bbb Z[i]$}.
		\newblock {\em Funct. Approx. Comment. Math.}, 41:71--103, 2009.
		
		\bibitem[Qi1]{Qi-Gauss}
		Z.~Qi.
		\newblock Subconvexity for twisted {$L$}-functions on {$\rm{GL}_3$} over the
		{G}aussian number field.
		\newblock {\em Trans. Amer. Math. Soc.}, 372(12):8897--8932, 2019.
		
		
		\bibitem[Qi2]{Qi-GL(3)}
		Z.~Qi.
		\newblock Subconvexity for {$L$}-functions on {$\rm GL_3$} over number fields.
		\newblock {\em J. Eur. Math. Soc. (JEMS)}, 26\allowbreak(3):\allowbreak1113--1192, 2024.
		
		
		\bibitem[Qi3]{Qi-Sym2-LS}
		Z.~Qi.
		\newblock {O}n the symmetric square large sieve inequality for {$\mathrm{PSL}_2
			(\mathbb{Z} {[i]}) \backslash \mathrm{PSL}_2 (\mathbb{C}) $} and the prime
		geodesic theorem for {$\mathrm{PSL}_2 (\mathbb{Z} {[i]}) \backslash
			\mathbb{H}^3 $}.
		\newblock {arXiv:2407.17959}, 2024.
		
		\bibitem[Sar]{Sarnak-H3}
		P.~Sarnak.
		\newblock The arithmetic and geometry of some hyperbolic three-manifolds.
		\newblock {\em Acta Math.}, 151(3-4):253--295, 1983.
		
		\bibitem[Shi]{Shimura-Sym2}
		G.~Shimura.
		\newblock On the holomorphy of certain {D}irichlet series.
		\newblock {\em Proc. London Math. Soc. (3)}, 31\allowbreak(1):79--98, 1975.
		
		\bibitem[SY]{Sound-Young-PGT}
		K.~Soundararajan and M.~P. Young.
		\newblock The prime geodesic theorem.
		\newblock {\em J. Reine Angew. Math.}, 676:\allowbreak105--120, 2013.
		
		\bibitem[TX]{Tang-Xu-Sym2-Maass}
		H.~Tang and Z.~Xu.
		\newblock Central value of the symmetric square {$L$}-functions related to
		{H}ecke-{M}aass forms.
		\newblock {\em Lith. Math. J.}, 56(2):251--267, 2016.
		
		\bibitem[Ven]{Venkatesh-BeyondEndoscopy}
		A.~Venkatesh.
		\newblock ``{B}eyond endoscopy'' and special forms on $\mathrm{GL}(2)$.
		\newblock {\em J. Reine Angew. Math.}, 577:23--80, 2004.
		
		\bibitem[Wei]{Weil-Unitary}
		A.~Weil.
		\newblock Sur certains groupes d'op\'{e}rateurs unitaires.
		\newblock {\em Acta Math.}, 111:143--211, 1964.
		
	\end{thebibliography}

 \newcommand{\etalchar}[1]{$^{#1}$}

\end{document}